\newtheorem{theorem}{Theorem}[section]
\newtheorem{lemma}[theorem]{Lemma}
\newtheorem{definition}[theorem]{Definition}
\newtheorem{remark}[theorem]{Remark}
\numberwithin{equation}{section} 
\numberwithin{figure}{section}   
\numberwithin{table}{section}    
\definecolor{hw}{rgb}{1,0,0}
\begin{document}
\renewcommand{\theequation}{\arabic{section}.\arabic{equation}}
\title{\bf A Nystr\"{o}m Method for Scattering by a Two-layered Medium with a Rough Boundary}
\author{
Haiyang Liu\thanks{LMAM, School of Mathematical Sciences, Peking University, Beijing 100871, China ({\tt haiyang.l@pku.edu.cn})} \quad
Long Li\thanks{Academy of Mathematics and Systems Science, Chinese Academy of Sciences, Beijing 100190, China ({\tt longli@amss.ac.cn})}
\quad
Jiansheng Yang\thanks{LMAM, School of Mathematical Sciences, Peking University, Beijing 100871, China ({\tt jsyang@pku.edu.cn})} \quad
Bo Zhang\thanks{SKLMS and Academy of Mathematics and Systems Science, Chinese Academy of Sciences, Beijing, 100190, China and School of Mathematical Sciences, University of Chinese Academy of Sciences, Beijing 100049, China ({\tt b.zhang@amt.ac.cn})}
\quad
Haiwen Zhang\thanks{Corresponding author. SKLMS and Academy of Mathematics and Systems Science, Chinese Academy of Sciences,
Beijing 100190, China ({\tt zhanghaiwen@amss.ac.cn})}
}

\date{}

\maketitle
\begin{abstract}
This paper is concerned with problems of scattering of time-harmonic acoustic waves by a two-layered medium with
a non-locally perturbed boundary (called a rough boundary in this paper) in two dimensions, where a Dirichlet or
impedance boundary condition is imposed on the boundary. The two-layered medium is composed of two unbounded media
with different physical properties and the interface between the two media is considered to be a planar surface.
We formulate the scattering problems considered as boundary value problems and prove the result of the
well-posedness of each boundary value problem by utilizing the integral equation method associated with
the two-layered Green function.
Moreover, we develop a Nystr\"{o}m method for numerically solving the boundary value problems considered,
based on the proposed integral equation formulations.
We establish the convergence results of the Nystr\"{o}m method with the convergence rates depending on the smoothness
of the rough boundary. It is worth noting that in establishing the well-posedness
of the boundary value problems as well as the convergence results of the Nystr\"{o}m method,
an essential role is played by the investigation of the asymptotic properties of the two-layered Green function
for small and large arguments.
Finally, numerical experiments are carried out to show the effectiveness of the Nystr\"{o}m method.

\vspace{.2in}
{\bf Keywords:} two-layered Green function, two-layered medium, integral equation method, Nystr\"{o}m method
\end{abstract}

\section{Introduction}
This paper is concerned with the well-posedness and the numerical method for the problems of
scattering of time-harmonic acoustic waves in a two-layered medium in two
dimensions.
The two-layered medium is composed of two unbounded media with
different physical properties
and the interface between the two media
is considered to be a planar surface.
The boundary of the two-layered medium is assumed to be a \textit{rough surface}, which is a non-local perturbation with a finite height from a planar surface.
Such scattering problems occur in various scientific and engineering applications, such as ground-penetrating radar, seismic exploration, ocean exploration, photonic crystal, and diffraction by gratings. For an introduction and historical remarks, we refer to \cite{C1999, V1999, D2002, P1980, WC2001, SS2001}.

There are many works concerning
the well-posedness of
the rough surface scattering problems for acoustic waves.
The rough surface scattering problems with Dirichlet or impedance boundary conditions
have been studied in
\cite{CZ1998, ZC2003, CR1996, CRZ1999} by using the integral equation methods.
In each of these works, the layer potential technique was applied to transform the scattering problem into an equivalent boundary integral equation.
\cite{ZC1998, CZ1998a, CZ1999}
considered the rough surface scattering problems by penetrable interfaces and inhomogeneous layers,
using the
integral equation methods.
In \cite{CM2005, CE2010},
the authors studied
the rough surface scattering problem with a sound-soft boundary by employing the variational approach in the classical Sobolev space or the weighted Sobolev space.
Moreover,
the method in \cite{CM2005}
was extended in \cite{HLQZ2015}
to study the scattering problem by an inhomogeneous layer of a finite height, where the Neumann or generalized impedance boundary condition
was imposed on the lower boundary of the inhomogeneous layer.
For more works on the well-posedness of the rough surface scattering problems for electromagnetic or elastic waves, we refer to \cite{EH2012, EH2015, HL2011, LWZ2011, HLZ2020}.

Some numerical methods have also been developed for the rough surface scattering problems.
In \cite{MCK2000}, the authors introduced
the Nystr\"{o}m method for
the second-kind integral equation defined on the real line.
Based on this, numerical algorithms were proposed for the rough surface scattering problems; see \cite{MCK2000} for the sound-soft case and \cite{LSZ2016} for the penetrable case.
An adaptive finite element method with a perfectly matched layer (PML) was proposed in \cite{CW2003} for the wave scattering by periodic structures.
In \cite{ZZ2018b}, the authors proposed the Nystr\"{o}m method for the scattering problem by penetrable diffraction gratings. In this method, a fast FFT-based algorithm developed in \cite{ZZ2018} was utilized for efficient computation
of the quasi-periodic Green's functions.
In \cite{CM2009}, the authors
investigated the use of the PML to truncate the rough surface scattering problem and proved the linear rate of convergence for the proposed PML-based method.

In this paper, we consider the scattering problems in a two-layered medium, where the Dirichlet or impedance boundary condition is imposed on the rough boundary.
First, we formulate the considered scattering problems as
the boundary value problems and
prove the uniqueness and existence results of each boundary value problem by utilizing the integral equation method associated with the two-layered Green function.
Our proofs follow the ideas in \cite{CRZ1999,CZ1998,ZC2003}, which are based on an integral equation theory on unbounded domains given in \cite{CZ1997}.
We note that different from \cite{CRZ1999,CZ1998,ZC2003}, in this paper we use the two-layered Green function (rather than the half-space Dirichlet Green function or the half-space impedance Green function) in the proposed integral equation formulations, which is due to the presence of the two-layered medium with the planar interface.
It is also worth noting that in the proofs of the uniqueness and existence results of this paper, an essential role
is played by the investigation of the asymptotic properties of the two-layered Green function for small and large arguments.
Second, based on the proposed integral equation formulations, we develop the
Nystr\"{o}m method for numerically solving the considered boundary value problems, where the relevant integral equations are discretized by using the method given in \cite{MCK2000}.
With the aid of the convergence theory of the Nystr\"{o}m method given in \cite{MCK2000}, we establish the convergence results of our method with the convergence rates depending on the smoothness of the rough boundary.
It should be noted that the asymptotic properties of the two-layered Green function obtained in this paper provide a theoretical foundation for our convergence results.
Finally, numerical experiments are carried out to show the effectiveness of our Nystr\"{o}m method.

The rest of the paper is organized as follows. In Section \ref{section2}, we introduce the considered scattering problems and formulate them as the boundary value problems.
In Section \ref{section8}, we present the properties of the two-layered Green function.
Based on these properties,
we establish the well-posedness of the considered boundary value problems in
Section \ref{section3}.
Section \ref{section6} is devoted to the Nystr\"{o}m method for the considered boundary value problems. The convergence results
and the numerical experiments
of the Nystr\"{o}m method are also
given in Section \ref{section6}.
Some concluding remarks are given in Section \ref{section13}.
We prove Lemma \ref{thm34} and Theorem \ref{thm28} in Appendix \ref{section14}.
In Appendixes \ref{section7} and \ref{section9}, we present the potential theory and the solvability of integral operators on the real line, respectively,
associated with the two-layered Green function.

\section{Mathematical Models of the Scattering Problems}
\label{section2}
In this section, we introduce the mathematical models of the scattering problems considered in this paper.
To this end, we give some notations, which will be used throughout the paper.
Let $V\subset \mathbb{R}^{m}$ ($m = 1, 2$). We denote by $BC(V)$ the set of functions bounded and continuous on $V$, a Banach space under the norm $\|\phi\|_{\infty,V} : = \sup_{x\in V}  |\phi(x)|$, and by $BUC(V)$ the closed subspace of $BC(V)$ containing functions that are bounded and uniformly continuous on $V$. We abbreviate $\|\cdot\|_{\infty,\mathbb{R}}$ by $\|\cdot\|_{\infty}$. For $0<\alpha\leq 1$, we denote by $C^{0,\alpha}(V)$ the Banach space of functions $\phi \in BC(V)$, which are uniformly H\"{o}lder continuous with exponent $\alpha$ and with norm $\|\cdot\|_{C^{0,\alpha}(V)}$ defined by
$\|\phi\|_{C^{0,\alpha}(V)}:=\|\phi\|_{\infty,V}+\sup_{x,y\in V,x\neq y} [|\phi(x)-\phi(y)|/|x-y|^{\alpha}]$.
We let $C^{1,\alpha}(\mathbb{R}):=\{\phi \in BC(\mathbb{R})\cap C^{1}(
    \mathbb{R}) : \phi'\in C^{0,\alpha}(\mathbb{R}) \}$ be a Banach space under the norm $\|\phi\|_{C^{1,\alpha}(\mathbb{R})}:=\|\phi\|_{\infty}+\|\phi'\|_{C^{0,\alpha}(\mathbb{R})}$.
For any $a\in \mathbb{R}$, define $\Gamma_a := \{(x_1,a) \,:\, x_1\in \mathbb{R}\}$ and $U_a:=\left\{(x_1,x_2) \,:\, x_1\in \mathbb{R}, x_2>a\right\}$. In particular, the notation $\Gamma_{0}$ denotes the plane $x_2=0$. Let $\mathbb{R}_{\pm}^2:=\{(x_1,x_2)\in \mathbb{R}^2\,:\,x_2 \gtrless 0\}$ be the upper and lower half-spaces, respectively. For any $x,y\in \mathbb{R}^2$, let $x=(x_1,x_2)$ and $y=(y_1,y_2)$. For any $x\in \mathbb{R}^2$ with $x\neq 0$, let $\hat{x}:=x/|x|$ denote the direction of $x$.
Define $\mathbb{S}^1_\pm:=\{x=(x_1,x_2)\in\mathbb{R}^2:|x|=1,x_2\gtrless 0\}$.
Let $C(V)$ represent the space of continuous functions on $V$ and let $C^i(V)$ represent the space of $C^i$-continuous functions on $V$ for $i=1,2$.
Throughout this paper, the constants may be different at different places.

The geometry of the scattering problems we consider is shown in Figure \ref{fig1}.
Let $\mathbb{R}_{+}^2$ and $\mathbb{R}_{-}^2$ denote the homogeneous media above and below
$\Gamma_0$, respectively.
The wave numbers of the media in the upper and lower half-spaces are $k_{+}$ and $k_{-}$, respectively, with $k_{+},k_{-} > 0 $ and $k_+\neq k_-$. Define $n:=k_{-}/k_{+}$.
Assume that a rough surface $\Gamma:=\left\{(x_1,x_2)\,:\,x_2=f(x_1),x_1 \in \mathbb{R}\right\}$ is fully embedded in the lower half-space $\mathbb{R}^2_{-}$,
where $f\in C^{1,1}(\mathbb{R})$ with $f_{+}:=\sup_{x\in \mathbb{R}}f(x) < 0$.
Let $f_{-}:=\inf_{x\in \mathbb{R}}f(x)$ and the Lipschitz constant $L := \|f'\|_{\infty}$.
Define the domain $D:=\{ (x_1,x_2) \,:\, x_2>f(x_1)\}$.

\begin{figure}[ht]
    \centering
    \includegraphics[width=.5\textwidth]{./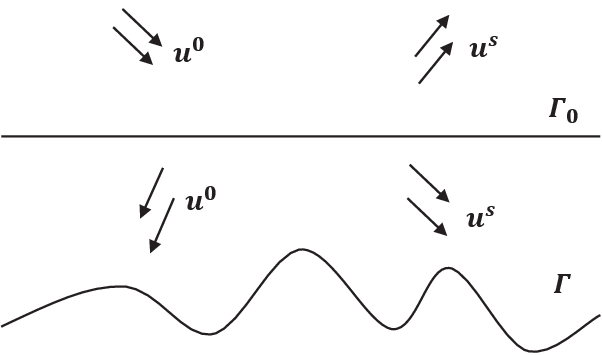}
    \caption{Geometry of the scattering problems}
    \label{fig1}
\end{figure}

Consider the scattering problems with time-harmonic incident waves in the domain $D$.
In this paper, we assume that the incident wave $u^{i}$ is either a plane wave or a point-source wave.
The reference wave $u^{0}$ is generated by the incident wave $u^{i}$ and the two-layered medium. The explicit expressions of the incident wave and its corresponding reference wave will be described later.
Then the total field $u^{tot}=u^0+u^s$ is the sum of the reference wave $u^0$ and the scattered wave $u^s$, where $u^s$ satisfies the following Helmholtz equations
\begin{align}\label{eq2}
    \left\{ \begin{aligned}
    &\Delta u^{s}+k_{+}^2u^{s}=0 &&\text{ in }\mathbb{R}^2_{+}, \\
    &\Delta u^{s}+k_{-}^2u^{s}=0 &&\text{ in }\mathbb{R}^2_{-}\cap D.
    \end{aligned} \right.
\end{align}
Moreover, we assume the total field $u^{tot}$ satisfies the following boundary conditions on the interface $\Gamma_{0}$, i.e.,
\begin{equation}
u^{tot}\big|_{+}=u^{tot}\big|_{-},\quad \left.\frac{\partial u^{tot}} {\partial x_2}\right|_{+} = \left.\frac{\partial u^{tot} }{\partial x_2}\right|_{-} \text{ on }\Gamma_{0},
\label{eq172}
\end{equation}
where '+/-' denote the limits from $\mathbb{R}^2_+$ and
$\mathbb{R}^2_-$, respectively.
Furthermore, the boundary condition imposed on $\Gamma$ is given by $\mathscr{B}(u^{tot})=0$ on $\Gamma$. Here, $\mathscr{B}$ denotes one of the following two boundary conditions:
\begin{equation*}
\begin{cases}\mathscr{B}(u^{tot}):=u^{tot} \quad &\text{ on }\Gamma,\quad \text{ if }\Gamma \text{ is a sound-soft boundary,}\\
\mathscr{B} (u^{tot}):=\partial u^{tot} / \partial \nu - ik_{-}\beta u^{tot} \quad & \text{ on }\Gamma, \quad \text{ if }\Gamma \text{ is an impedance boundary,}
\end{cases}
\end{equation*}
where $\beta\in BC(\Gamma)$, $\nu(x)$ denotes the unit normal at $x\in \Gamma$ pointing out of $D$ and $\partial u^{tot}/\partial \nu$ denotes the normal derivative of $u^{tot}$ with respect to $\nu$.

To guarantee the uniqueness of the considered scattering problems, the scattered wave $u^s$ is required to satisfy a radiation condition. In contrast to the bounded obstacle scattering problems, which utilize the Sommerfeld radiation condition, $u^s$ needs to satisfy the so-called upward propagating radiation condition in $U_{0}$ with respect to $k_{+}$, that is,
\begin{equation}
    u^s(x) =2\int_{\Gamma_{h}}\frac{\partial \Phi_{k_{+}}(x,y)}{\partial y_2}\phi(y)ds(y),\quad x\in U_{h},
    \label{eq1}
\end{equation}
for some $h>0$ and $\phi\in L^{\infty}(\Gamma_{h})$, where $\Phi_{k_{+}}(x,y):=\frac{i}{4}H_{0}^{(1)}(k_{+}|x-y|)$ with $x,y\in \mathbb{R}^2$ and $x\neq y$ is the free-space Green function for the Helmholtz equation $\Delta w+k^2_+ w=0$ with the wave number $k_{+}$ and $H_{0}^{(1)}(t)$ with $t\in \mathbb{R}$ denotes the Hankel function of the first kind of order zero.
We also need $u^s$ to satisfy the following boundedness condition
\begin{equation}
    \sup_{x\in D}\big|(x_2+|f_{-}|+1)^{\alpha}u^s(x)\big|<\infty
    \label{eq3}
\end{equation}
for some $\alpha\in \mathbb{R}$.

Furthermore, if $\Gamma$ is an impedance boundary, the scattered wave $u^s$ needs to satisfy that for some $\theta\in (0,1)$ and some constant $C_{\theta}>0$,
\begin{equation}\label{eq4}
\left|\nabla u^s(x)\right|\leq C_{\theta}[x_2-f(x_1)]^{\theta-1}
\end{equation}
for $x\in D\backslash \overline{U_{b}}$, where $b=f_{+}/2$.

Now we describe the reference wave $u^0$ more specifically. The reference wave is the total field of the scattering problem in the two-layered medium without the rough surface $\Gamma$ and is generated by the incident wave $u^i$.
In this paper, we consider two types of incident waves, that is, the plane wave and the point-source wave.

First, we describe the reference wave in the case when the incident wave $u^i$ is the plane wave $u_{pl}^i(x):= e^{ik_{+}x\cdot d}$, where
$d := (\cos(\theta_d),\sin(\theta_d))$, $\theta_d\in (\pi ,2\pi)$.
In this case, the reference wave $u^0=u^{0}_{pl}$ is given by (see, e.g., (2.13a) and (2.13b) in \cite{P2016} or Section 4 in \cite{LYZZ2022})
\begin{equation}\label{hw-eq17}
    u^0_{pl}(x) = \left\{
\begin{array}{ll}
u^i_{pl}(x)+u^r_{pl}(x) & \text{ in }\mathbb{R}^2_{+}, \\
u^t_{pl}(x) & \text{ in }\mathbb{R}^2_{-},
\end{array}
\right.
\end{equation}
with
\begin{equation*}
    u^r_{pl}(x) := \mathcal{R}(\pi+\theta_{d}) e^{ik_{+}x\cdot d^r},\quad
u^t_{pl}(x) :=\mathcal{T}(\pi+\theta_{d}) e^{ik_{-}x\cdot d^t},
\end{equation*}
where $d^r = (\cos(\theta_d),-\sin(\theta_d))$ is the reflected direction, $d^t=n^{-1}(\cos (\theta_d),-i\mathcal{S}(\cos \theta_d,n))$
and where
$\mathcal{R}\left(\pi+\theta_{d}\right)$ and $\mathcal{T}\left(\pi+\theta_{d}\right)$ are called the reflection and transmission coefficients, respectively, with $\mathcal{R}$ and $\mathcal{T}$ defined by
\begin{equation*}
    \mathcal{R}(\theta):=\frac{i \sin \theta+\mathcal{S}(\cos \theta, n)}{i \sin \theta-\mathcal{S}(\cos \theta, n)}, \quad \mathcal{T}(\theta):=\mathcal{R}(\theta)+1 \quad \text { for } \theta \in \mathbb{R}.
\end{equation*}
Here, $S(z,a)$ with $z\in \mathbb{R}$ and $a>0$ is defined by
\begin{equation*}
    \mathcal{S}(z,a):=\left\{\begin{array}{ll}
        -i\sqrt{a^2-z^2} & \text{ if } a^{-1}|z|\leq 1,\\
        \sqrt{z^2-a^2} & \text{ if } a^{-1}|z| > 1.
    \end{array}\right.
\end{equation*}
The definition of $\mathcal{S}$ gives that
\begin{equation*}
    d^t= \begin{cases}\left(n^{-1} \cos \theta_d,-\sqrt{1-\left(n^{-1} \cos \theta_d\right)^2}\right) & \text { if } n^{-1}\left|\cos \theta_d\right| \leq 1, \\ \left(n^{-1} \cos \theta_d,-i \sqrt{\left(n^{-1} \cos \theta_d\right)^2-1}\right) & \text { if } n^{-1}\left|\cos \theta_d\right|>1.\end{cases}
    \end{equation*}
In particular, if $|n^{-1}\cos (\theta_d)|\leq 1$, then $d^{t}=(\cos(\theta_d^{t}),\sin(\theta_d^{t}))$ is the transmitted direction with $\theta_d^t\in [\pi,2\pi]$ satisfying $\cos(\theta_d^t)=n^{-1}\cos(\theta_d)$.
It is easy to see that such reference wave $u^0$ satisfies the following conditions
\begin{align}\label{hw-eq16}
    \left\{ \begin{aligned}
    &\Delta u^0+k_{+}^2u^{0}=0 &&\text{ in }\mathbb{R}^2_{+}, \\
    &\Delta u^0+k_{-}^2u^{0}=0 &&\text{ in }\mathbb{R}^2_{-},\\
    &u^0_{+}=u^{0}_{-},\quad\left.\frac{\partial u^{0}}{\partial x_2}\right|_{+} =\left.\frac{\partial u^{0} }{\partial x_2}\right|_{-} &&\text{ on }\Gamma_{0}. \end{aligned} \right.
\end{align}

Second, we describe the reference wave in the case when the incident wave $u^i$ is the point source wave $u_{ps}^{i}(x)$, where $u_{ps}^{i}(x)=\frac{i}{4}H_{0}^{(1)}(k_+ |x-y|)$ if the source point $y\in \mathbb{R}^2_+$ and $u_{ps}^{i}(x)=\frac{i}{4}H_{0}^{(1)}(k_- |x-y|)$ if the source point $y\in D\cap\mathbb{R}^2_-$. In this case, the reference wave $u^0(x) = G(x,y)$, where $G(x,y)$ denotes the so-called two-layered Green function.
Precisely,
for any $y\in\mathbb{R}^2_{+}\cup \mathbb{R}^2_{-}$, the two-layered Green function $G(x,y)$ is the solution of the following scattering problem (see page 17 in \cite{P2016})
\begin{align}
    &\Delta_{x} G(x,y)+(k(x))^2G(x,y)=-\delta(x-y) &&\text{in }\mathbb{R}^2,\label{eq166} \\
   & [G(x,y)]=0,\; [\partial G(x,y)/\partial \nu(x)]=0 &&\text{on }\Gamma_{0},\\
&\lim\limits_{|x|\to\infty}\sqrt{|x|}\left(\frac{\partial G(x,y)}{\partial |x|}-ik(x)G(x,y)\right)=0 &&\text{uniformly for all } \hat{x} \in \mathbb{S}^1_+\cup\mathbb{S}^1_-, \label{eq22}
\end{align}
where $k(x)=k_{\pm}$ for $x\in \mathbb{R}^2_{\pm}$, $\delta$ denotes the Dirac delta distribution, $\nu$ denotes the unit normal on $\Gamma_0$ pointing into $\mathbb{R}^2_{+}$ and $[\cdot]$ denotes the jump across the interface $\Gamma_0$.
Here, (\ref{eq22}) is called the Sommerfeld radiation condition.
The explicit expression of $G(x,y)$ is given by (see, e.g., \cite[formula (2.27)]{P2016})
\begin{equation}
    G(x,y)= \begin{cases}
       \frac{i}{4}H_{0}^{(1)}(k_{+}|x-y|)+G_{\mathcal{R}}(x,y),\quad x,y\in \mathbb{R}^2_{+},\\
       G_{\mathcal{Q}}(x,y),\quad x\in \mathbb{R}^{2}_{+}, y\in \mathbb{R}^2_{-} \text{ or }x\in \mathbb{R}^{2}_{-}, y\in \mathbb{R}^2_{+},\\
       \frac{i}{4}H_{0}^{(1)}(k_{-}|x-y|)+G_{\mathcal{R}}(x,y),\quad x,y\in \mathbb{R}^2_{-},
    \end{cases}
    \label{eq63}
\end{equation}
where $G_{\mathcal{R}}(x,y)$ and $G_{\mathcal{Q}}(x,y)$ are given by
\begin{equation}\label{hw-eq9}
    G_{\mathcal{R}}(x,y)=\begin{cases}
        \frac{1}{4 \pi} \int_{-\infty}^{\infty} \frac{\mathcal{S}\left(\xi, k_{+}\right)-\mathcal{S}\left(\xi, k_{-}\right)}{\mathcal{S}\left(\xi, k_{+}\right)+\mathcal{S}\left(\xi, k_{-}\right)} \frac{e^{-\mathcal{S}\left(\xi, k_{+}\right)\left|x_2+y_2\right|}}{\mathcal{S}\left(\xi, k_{+}\right)} e^{i \xi\left(x_1-y_1\right)} d \xi,\quad x,y\in \mathbb{R}^2_{+}, \\
        \frac{1}{4 \pi} \int_{-\infty}^{\infty} \frac{\mathcal{S}\left(\xi, k_{-}\right)-\mathcal{S}\left(\xi, k_{+}\right)}{\mathcal{S}\left(\xi, k_{+}\right)+\mathcal{S}\left(\xi, k_{-}\right)} \frac{e^{-\mathcal{S}(\xi, k_{-})\left|x_2+y_2\right|}}{\mathcal{S}\left(\xi, k_{-}\right)} e^{i \xi\left(x_1-y_1\right)} d \xi,\quad x,y\in \mathbb{R}^2_{-},
    \end{cases}
\end{equation}
\begin{equation}\label{hw-eq11}
    G_{\mathcal{Q}}(x,y)=\frac{1}{2 \pi} \int_{-\infty}^{\infty} \frac{e^{-\mathcal{S}\left(\xi, k_{-}\right) |y_2|-\mathcal{S}\left(\xi, k_{+}\right) |x_2|}}{\mathcal{S}\left(\xi, k_{+}\right)+\mathcal{S}\left(\xi, k_{-}\right)} e^{i \xi\left(x_1-y_1\right)} d \xi,\quad x\in \mathbb{R}^{2}_{+}, y\in \mathbb{R}^2_{-} \text{ or }x\in \mathbb{R}^{2}_{-}, y\in \mathbb{R}^2_{+}.
\end{equation}

Now the above scattering problems can be formulated as the following two boundary value problems (DBVP) and (IBVP) for the scattered wave $u^s$.

\begin{definition}[$\mathscr{R}_{d}(D)$]
    Let $\mathscr{R}_{d}(D)$ denote the set of functions $v\in C^{2}(D\backslash \Gamma_{0})\cap C(\overline{D})$ such that $v|_{\overline{U}_{0}}\in C^{1}(\overline{U}_{0})$ and $v|_{D\backslash U_{0}}\in C^{1}(D\backslash U_{0})$.
\end{definition}

\textbf{Dirichlet Boundary Value Problem (DBVP).}
    Given $g\in BC(\Gamma)$, determine  $u^s \in \mathscr{R}_{d}(D)$ such that:

(\romannumeral1) $u^s$ is a solution of the Helmholtz equations in \eqref{eq2};

(\romannumeral2) $u^s|_{+}=u^s|_{-},\partial u^s/ \partial x_2\left|_{+}=\partial u^s/ \partial x_2\right|_{-}$ on $\Gamma_{0}$;

(\romannumeral3) $u^s = g$ on $\Gamma$;

(\romannumeral4) $u^s$ satisfies (\ref{eq3}) for some $\alpha\in\mathbb{R}$;

(\romannumeral5) $u^s$ satisfies the upward propagating radiation condition (\ref{eq1}) in $U_{0}$ with the wave number $k_{+}$.

\begin{definition}[$\mathscr{R}_{i}(D)$]
Let $\mathscr{R}_{i}(D)$ denote the set of functions $v \in C^2(D\backslash\Gamma_{0})\cap C(\overline{D})$ satisfying $v|_{\overline{U}_{0}}\in C^{1}(\overline{U}_{0})$, $v|_{D\backslash U_{0}}\in C^{1}(D\backslash U_{0})$ and
satisfying that the normal derivative of $v$ defined by $\partial v / \partial \nu (x): = \lim_{h\to 0+}\nu(x)\cdot \nabla(x-h\nu(x))$
exists uniformly for $x$
on any compact subset of $\Gamma$.
\end{definition}

\textbf{Impedance Boundary Value Problem (IBVP).} Given $g \in BC(\Gamma),\beta \in BC(\Gamma)$, determine  $u^s \in \mathscr{R}_{i}(D)$ such that:

(\romannumeral1) $u^s$ is a solution of the Helmholtz equations in \eqref{eq2};

(\romannumeral2) $u^s|_{+}=u^s|_{-},\partial u^s/ \partial x_2\left|_{+}=\partial u^s/ \partial x_2\right|_{-}$ on $\Gamma_{0}$;

(\romannumeral3) $\partial u^s/\partial \nu - ik_{-}\beta u^s=g$ on $\Gamma$;

(\romannumeral4) $u^s$ satisfies (\ref{eq3}) for some $\alpha\in\mathbb{R}$;

(\romannumeral5) For some $\theta\in (0,1)$ and some constant $C_{\theta}>0$,
$u^s$ satisfies (\ref{eq4})
for $x\in D\backslash \overline{U_{b}}$, where $b=f_{+}/2$;

(\romannumeral6) $u^s$ satisfies the upward propagating radiation condition (\ref{eq1}) in $U_0$ with the wave number $k_{+}$.

\begin{remark}
We note that if $u^s$ is the scattered wave of the scattering problem (\ref{eq2})--(\ref{eq3})
associated with the sound-soft boundary $\Gamma$ (resp. the scattering problem (\ref{eq2})--(\ref{eq4}) associated with the impedance boundary $\Gamma$),
then $u^s$ satisfies the problem (DBVP) with
$g=-u^0|_\Gamma\in BC(\Gamma)$ (resp. the problem (IBVP) with $g = -\partial u^{0}/\partial\nu|_\Gamma + ik_{-}\beta u^{0}|_\Gamma\in BC(\Gamma)$), where $u^0$ is given as above.
\end{remark}

\section{Properties of the Two-layered Green Function}
\label{section8}
In this section, we present some properties of the two-layered Green function $G(x,y)$, which are useful for the investigation of the well-posedness of the considered boundary value problems and the convergence of the Nystr\"{o}m method in the following two sections.

Let the two-layered Green function $G(x,y)$ with $y\in\mathbb{R}^2_+\cup\mathbb{R}^2_-$ be given as in Section \ref{section2}. For any source point $y$ lying on the interface $\Gamma_0$, due to the well-posedness of
the scattering problem in a two-layered medium (see \cite{BHY2018}), we can define the two-layered Green function $G(x,y)$ as the unique solution that satisfies $G(\cdot,y)-G_0(\cdot,y)\in H^{1}_{loc}(\mathbb{R}^2)$, $\Delta_x G(x,y)+k^2(x)G(x,y)=-\delta(x,y)$ in $\mathbb{R}^2$ (in the distributional sense) and the Sommerfeld radiation condition \eqref{eq22}, where $G_{0}(\cdot,y):=-1/(2\pi)\ln|\cdot -y|$ denotes the fundamental solution of the Laplace equation $\Delta w=0$ in $\mathbb{R}^2$.
Here, $H^{1}_{loc}(\mathbb{R}^2)$ denotes the space of all functions $\phi:\mathbb{R}^2\to\mathbb{C}$ such that $\phi \in H^1(B)$ for all open balls $B\subset \mathbb{R}^2$. Moreover, by the expression of the Hankel function $H^{(1)}_{0}(t)$ given in \cite[Section 3.5]{CK2019} and the expression of $G(x,y)$ given in \eqref{eq63}, it is clear that for any $y\in \mathbb{R}^2_{+}\cup \mathbb{R}^2_{-}$, $G(x,y)$ also satisfies $G(\cdot,y)-G_{0}(\cdot,y)\in H^{1}_{loc}(\mathbb{R}^2)$.

Let $x,y\in \mathbb{R}^2$ with $x=(x_1,x_2),y=(y_1,y_2)$. For any $y=(y_1,y_2)$, let $y':=(y_1,-y_2)$.
Using the following integral representation of Hankel function (see \cite[formula (2.2.11)]{C1999})
    \begin{equation}
        \frac{i}{4}H_{0}^{(1)}(\kappa|x-y|) = \frac{1}{4\pi}\int_{-\infty}^{+\infty}\frac{e^{-\mathcal S(\xi,\kappa)|x_{2}-y_{2}|}}{\mathcal{S}(\xi,\kappa)}e^{i\xi(x_1-y_1)}d\xi \label{eq156}
    \end{equation}
    for $\kappa>0$, $x,y\in \mathbb{R}^2$ with $x\neq y$, the formula (\ref{eq63}) for $G(x,y)$ can be written as
    \begin{equation}\label{hw-eq12}
        G(x,y)= \begin{cases}
           G_{\mathcal{D},k_{+}}(x,y)+G_{\mathcal{P}}(x,y),\quad x,y\in \mathbb{R}_{+}^{2}, \\
           G_{\mathcal{Q}}(x,y),\quad x\in \mathbb{R}^{2}_{+}, y\in \mathbb{R}^2_{-} \text{ or }x\in \mathbb{R}^{2}_{-}, y\in \mathbb{R}^2_{+}, \\
           G_{\mathcal{D},k_{-}}(x,y)+G_{\mathcal{P}}(x,y),\quad x,y\in \mathbb{R}_{-}^{2},
        \end{cases}
    \end{equation}
    where $G_{\mathcal{D},\kappa}(x,y)$ is the half-space Dirichlet Green function for $\kappa>0$ (see \cite{CR1996, ZC2003}) and is defined as
    \begin{equation*}
        G_{\mathcal{D},\kappa}(x,y) := \frac{i}{4}H_{0}^{(1)}(\kappa|x-y|)-\frac{i}{4}H_{0}^{(1)}(\kappa|x-y'|)
    \end{equation*}
    for $x,y\in \mathbb{R}^2$ with $x\notin \{y,y'\}$
    and where $G_{\mathcal{P}}$ is given by
        \begin{equation}
    G_{\mathcal{P}}(x,y) :=\begin{cases}
        \frac{1}{2\pi}\int_{-\infty}^{+\infty}\frac{1}{\mathcal{S}(\xi,k_{+})+\mathcal{S}(\xi,k_{-})}e^{-\mathcal{S}(\xi,k_{+})|x_{2}+y_{2}|}e^{i\xi(x_1-y_1)}d\xi, \quad x,y\in \mathbb{R}^{2}_{+},\\
        \frac{1}{2\pi}\int_{-\infty}^{+\infty}\frac{1}{\mathcal{S}(\xi,k_{+})+\mathcal{S}(\xi,k_{-})}e^{-\mathcal{S}(\xi,k_{-})|x_{2}+y_{2}|}e^{i\xi(x_1-y_1)}d\xi,\quad x,y\in \mathbb{R}^{2}_{-}.
    \end{cases}  \label{eq126}
        \end{equation}
Further, with the help of (\ref{eq156}), we write $G_{\mathcal{Q}}(x,y)$ as
\begin{equation*}
    G_{\mathcal{Q}}(x,y) =  \frac{i}{4}H_{0}^{(1)}(k_{+}|x-y|)+G_{\mathcal{S}}(x,y) \quad \text{for } x\in \mathbb{R}^{2}_{-},y\in \mathbb{R}^{2}_{+} \text{ or }x\in  \mathbb{R}^{2}_{+},y\in \mathbb{R}^{2}_{-},
\end{equation*}
where $G_{\mathcal{S}}(x,y)$ is defined by
\begin{equation}\label{hw-eq13}
    G_{\mathcal{S}}(x,y) := \frac{1}{4\pi}\int_{-\infty}^{+\infty} \bigg(\frac{2e^{-\mathcal{S}\left(\xi, k_{-}\right) |y_2|-\mathcal{S}\left(\xi, k_{+}\right) |x_2|}}{\mathcal{S}\left(\xi, k_{+}\right)+\mathcal{S}\left(\xi, k_{-}\right)} e^{i \xi\left(x_1-y_1\right)}-\frac{e^{-\mathcal{S}(\xi,k_{+})|x_2-y_2|}}{\mathcal{S}(\xi,k_{+})}e^{i\xi (x_1-y_1)}\bigg)d\xi
\end{equation}
for $x\in \mathbb{R}^{2}_{-},y\in \mathbb{R}^{2}_{+} \text{ or }x\in  \mathbb{R}^{2}_{+},y\in \mathbb{R}^{2}_{-}.$

The following lemma presents the continuity properties of $G(x,y)$.
The proof of this lemma is given in Appendix \ref{section14}.

\begin{lemma}For any $k_{+},k_{-}>0$ with $k_{+}\neq k_{-}$, we have $R(x,y):=G(x,y)-G_{0}(x,y)\in C^{1}(\mathbb{R}^2\times\mathbb{R}^2)$.
\label{thm34}
\end{lemma}

\begin{remark}By Lemma \ref{thm34}, $ G_{\mathcal{R}}(x,y) $ can be extended as a function in $ C^1(\overline{\mathbb{R}^2_{+}} \times \overline{\mathbb{R}^2_{+}}) \cup C^1(\overline{\mathbb{R}^2_{-}} \times \overline{\mathbb{R}^2_{-}}) $ and $ G_{\mathcal{S}}(x,y) $ can be extended as a function in $ C^1(\overline{\mathbb{R}^2_{+}} \times \overline{\mathbb{R}^2_{-}}) \cup C^1(\overline{\mathbb{R}^2_{-}} \times \overline{\mathbb{R}^2_{+}}) $.\label{thm35}
\end{remark}

With the aid of Lemma \ref{thm34}, Remark \ref{thm35} and some far-field asymptotic properties of the two-layered Green function obtained in \cite{LYZZ2022}, we have the following theorem on the estimates of $G_{\mathcal{P}}(x,y)$ and $G_{\mathcal{Q}}(x,y)$.
The proof of this theorem is also given in Appendix \ref{section14}.

\begin{theorem}
    Assume that $k_{+},k_{-}>0$ with $k_{+}\neq k_{-}$. Let $x = (x_1, x_2)\in\mathbb{R}^2$ and $y = (y_1,y_2)\in\mathbb{R}^2$. Define $y':=(y_1,-y_2)$ and $\widetilde{x-y}:=(x_1-y_1,x_2)$. Then we have the following statements.

    {\rm (i)} If $x,y$ satisfy $x_2\cdot y_2 > 0$, then $G_{\mathcal{P}}(x,y)$ satisfies the inequalities
    \begin{equation*}
        \left|G_{\mathcal{P}}(x,y)\right|,~\left|\nabla_{y} G_{\mathcal{P}}(x,y)\right| \leq C (1+|x_2|+|y_2|) |x-y'|^{-\frac{3}{2}},
    \end{equation*}
where the constant $C$ depends only on $k_{\pm}$.

    {\rm (ii)} If $x,y$ satisfy $x_2\cdot y_2<0$ and $|y_2|\leq h$ for some $h>0$, then $G_{\mathcal{Q}}(x,y)$ satisfies the inequalities
    \begin{equation*}
        \left|G_{\mathcal{Q}}(x,y)\right|,~\left|\nabla_{y} G_{\mathcal{Q}}(x,y)\right| \leq C (1+|x_2|)\big|\widetilde{x-y}\big|^{-\frac{3}{2}},
    \end{equation*}
    where the constant $C$ depends only on $k_{\pm}$ and $h$.
    \label{thm28}
\end{theorem}

Similar properties as in Theorem \ref{thm28} have been established for the half-space Dirichlet Green function and the half-space impedance Green function (see \cite[inequalities (8) and (24)]{ZC2003}). Especially, we mention that $G_{\mathcal{D},\kappa}$ satisfies the estimates (see \cite[Formula (8)]{ZC2003})
\begin{equation}
    |\nabla_{y} G_{\mathcal{D},\kappa}(x, y)|,~|G_{\mathcal{D},\kappa}(x, y)| \leq C\left(1+\left|x_2\right|\right)\left(1+\left|y_2\right|\right)\left\{|x-y|^{-3 / 2}+\left|x-y^{\prime}\right|^{-3 / 2}\right\} \label{eq161}
\end{equation}
for $x,y\in \mathbb{R}^2$ with $x\notin\{y,y'\}$, where the constant $C$ depends only on $\kappa>0$.

Finally, as a direct consequence of \eqref{hw-eq12}, \eqref{eq161}, Lemma \ref{thm34} and Theorem \ref{thm28}, we can obtain the following theorem on the estimates of $G(x,y)$
(especially the asymptotic estimates of $G(x,y)$ for small and large arguments), which is crucial for this paper.

\begin{theorem}\label{thm38}
Assume that $k_{+},k_{-}>0$ with $k_{+}\neq k_{-}$. Let $x = (x_1, x_2)\in\mathbb{R}^2$ and $y = (y_1,y_2)\in\mathbb{R}^2$. Define $y':=(y_1,-y_2)$ and $\widetilde{x-y}:=(x_1-y_1,x_2)$. Then we have the following statements.

{\rm (i)} If $x,y$ satisfy $x_2\cdot y_2 \geq 0$, then $G(x,y)$ satisfies the inequalities
    \begin{equation}
\left|G(x,y)\right|,~\left|\nabla_{y} G(x,y)\right| \leq C (1+|x_2|)(1+|y_2|) \left\{|x-y|^{-\frac{3}{2}}+|x-y'|^{-\frac{3}{2}}\right\}
         \quad \text{ for } x\neq y,y',\label{eq191}
    \end{equation}
where the constant $C$ depends only on $k_{\pm}$.

{\rm (ii)} If $x,y$ satisfy $x_2\cdot y_2<0$ and $|y_2|\leq h$ for some $h>0$, then $G(x,y)$ satisfies the inequalities
    \begin{equation}
\left|G(x,y)\right|,~\left|\nabla_{y} G(x,y)\right| \leq C (1+|x_2|)\big|\widetilde{x-y}\big|^{-\frac{3}{2}},\label{eq192}
    \end{equation}
    where the constant $C$ depends only on $k_{\pm}$ and $h$.
\end{theorem}

\section{The Well-posedness of the Problems (DBVP) and (IBVP)}
\label{section3}
In this section, we consider the well-posedness of the problems (DBVP) and (IBVP). In Section \ref{section3a}, we provide some a priori estimates of the first derivatives of relevant solutions. Then following the ideas in \cite{CRZ1999,CZ1998,ZC2003}, we prove the uniqueness results for the problems (DBVP) and (IBVP) in Sections \ref{section3b} and \ref{section3c}, respectively. Furthermore, the existence results for the problems (DBVP) and (IBVP) are given in Sections \ref{section10} and \ref{section11}, respectively.

\subsection{The Derivative Estimates}
\label{section3a}
If $u^s \in \mathscr{R}_{d}(D)$ satisfies the conditions (i)--(iv) of the problem (DBVP) with $g =0$, we can apply the standard elliptic regularity estimate \cite[Theorem 8.34]{GT1983} to deduce that $u^s \in C^{1}(\overline{D})$.
Let $L^{\infty}(G)$ denote the space of essentially bounded functions defined on a set $G\subset \mathbb{R}^2$.
Then the following lemma presents the local regularity estimate of solutions to the Laplace equation.

\begin{lemma}[Lemma 2.7 in \cite{CZ1998a}]
   If $G \subset \mathbb{R}^2$ is open and bounded, $v \in L^{\infty}(G)$, and $\Delta v=f \in L^{\infty}(G)$ (in a distributional sense), then $v \in C^1(G)$ and
\begin{equation*}
|\nabla v(x)| \leq C(d(x))^{-1}\left(\|v\|_{\infty,G}+\left\|d^2 f\right\|_{\infty,G}\right), \quad x \in G,
\end{equation*}
where $C$ is an absolute constant and $d(x)=\operatorname{dist}(x, \partial G)$.
\label{thm1}
\end{lemma}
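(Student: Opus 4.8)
The plan is to recognize this as the standard interior gradient estimate for the Poisson equation: one splits $v$ on a small ball into the Newtonian potential of $f$ plus a harmonic remainder, estimates each piece, and uses a scaling argument both to produce the absolute constant $C$ and to absorb the logarithm in the two‑dimensional fundamental solution. Concretely, fix $x\in G$, set $R:=d(x)/2$ so that the ball $B_R(x)\subset G$, and rescale via $\tilde v(z):=v(x+Rz)$ on the unit ball $B_1(0)$. Then $\Delta\tilde v=\tilde f$ in $B_1(0)$ (distributionally) with $\tilde f(z):=R^2 f(x+Rz)$, so that $\|\tilde f\|_{\infty,B_1(0)}=R^2\|f\|_{\infty,B_R(x)}$, while $\nabla\tilde v(0)=R\,\nabla v(x)$. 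Hence it suffices to prove the fixed‑scale estimate
\begin{equation*}
|\nabla\tilde v(0)|\le C\big(\|\tilde v\|_{\infty,B_1(0)}+\|\tilde f\|_{\infty,B_1(0)}\big)
\end{equation*}
with an absolute constant $C$, together with $\tilde v\in C^1(B_1(0))$.

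On $B_1(0)$ I would write $\tilde v=w+h$, where $w(z):=\int_{B_1(0)}E(z-\zeta)\tilde f(\zeta)\,d\zeta$ is the Newtonian potential of $\tilde f$ with $E(\cdot):=\frac{1}{2\pi}\log|\cdot|$ the fundamental solution of the Laplacian in $\R^2$, and $h:=\tilde v-w$. Standard potential theory for an $L_\infty$ density gives $w\in C^{1,\al}(B_1(0))$ for every $\al\in(0,1)$ together with $\|w\|_{\infty,B_1(0)}+\|\nabla w\|_{\infty,B_1(0)}\le C\|\tilde f\|_{\infty,B_1(0)}$; the gradient bound is immediate from $\int_{B_1(0)}|\nabla E(z-\zeta)|\,d\zeta\le C$, and the continuity of $\nabla w$ follows from a routine mollification/difference‑quotient argument. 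Since $\Delta h=0$ distributionally, Weyl's lemma yields $h\in C^\infty(B_1(0))$, and the mean‑value interior estimate for harmonic functions gives $|\nabla h(0)|\le C\|h\|_{\infty,B_1(0)}\le C(\|\tilde v\|_{\infty,B_1(0)}+\|w\|_{\infty,B_1(0)})$. Adding the two bounds proves the fixed‑scale estimate, and $\tilde v=w+h$ shows $\tilde v\in C^1(B_1(0))$; undoing the scaling and letting $x$ range over $G$ gives $v\in C^1(G)$.

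It remains to undo the scaling and insert the distance weight. For $y\in B_R(x)$ one has $d(y)\ge d(x)-|y-x|\ge d(x)-R=R$, hence $|f(y)|\le d(y)^2|f(y)|/R^2\le R^{-2}\|d^2 f\|_{\infty,G}$, so $\|\tilde f\|_{\infty,B_1(0)}=R^2\|f\|_{\infty,B_R(x)}\le\|d^2 f\|_{\infty,G}$ and trivially $\|\tilde v\|_{\infty,B_1(0)}\le\|v\|_{\infty,G}$. Substituting into the fixed‑scale estimate and using $\nabla\tilde v(0)=R\,\nabla v(x)$ with $R=d(x)/2$ yields $|\nabla v(x)|\le C d(x)^{-1}\big(\|v\|_{\infty,G}+\|d^2 f\|_{\infty,G}\big)$, which is the claim.

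The only genuinely delicate point is the regularity and estimate of the Newtonian potential when the density is merely in $L_\infty$ rather than H\"older continuous: one obtains $C^{1,\al}$ for all $\al<1$ but not $C^2$, which is precisely why the conclusion is a bound on $\nabla v$ and not on $D^2 v$. The scaling step must come first, as it is what keeps the constant absolute and removes the logarithmic factor present in the two‑dimensional fundamental solution; everything else is bookkeeping. Since this is exactly Lemma 2.7 of \cite{CZ1998a}, an acceptable alternative is simply to quote it together with the interior gradient estimate for Poisson's equation in \cite{GT1983} and the references therein.
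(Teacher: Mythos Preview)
Your proof is correct and is the standard argument for the interior gradient estimate for Poisson's equation: scale to the unit ball, split into a Newtonian potential of the $L_\infty$ right-hand side plus a harmonic remainder, estimate each piece, and scale back. The paper does not actually prove this lemma; it merely quotes it as \cite[Lemma 2.7]{CZ1998a}, so there is nothing to compare beyond noting that you have supplied the details the paper omits by citation. Your own closing remark---that one may simply invoke \cite{CZ1998a} and \cite{GT1983}---is exactly what the paper does.
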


Using the formula (\ref{eq3}) and Lemma \ref{thm1} with the domain $G$ to be a sufficiently small ball centered at $x$, we can obtain the following theorem. See \cite[formula (3.1)]{CZ1998} for a similar result.

\begin{theorem}
        If $u^s \in \mathscr{R}_{d}(D)$ satisfies the conditions {\rm (\romannumeral1)--(\romannumeral4)} of the problem {\rm (DBVP)} or $u^s \in \mathscr{R}_{i}(D)$ satisfies the conditions {\rm (\romannumeral1)--(\romannumeral4)} of the problem {\rm (IBVP)}, then there exists some $\alpha \in \mathbb{R}$ such that
    \begin{equation*}
       \sup_{x_1\in \mathbb{R},x_2> f(x_1)+\epsilon } \big|(x_2+|f_{-}|+1)^{\alpha}\nabla u^s(x)\big| < \infty
    \end{equation*}
    for all $\epsilon>0$.
    \label{thm2}
\end{theorem}

Moreover, by similar arguments as in the proof of \cite[Theorem 3.1]{CZ1998}, we have the following estimates on the solution satisfying the conditions (i)--(iii) of the problem (DBVP) with $g=0$.

\begin{theorem}
    Let $\epsilon:=|f_{+}|/2$ and $1/2<\alpha<\pi/(2\pi-\theta)<1$ with $\theta:=\pi-2\arctan(L)$. If $u^s \in \mathscr{R}_{d}(D)$ satisfies the conditions (i)--(iii) of the problem (DBVP) with $g=0$, then we have that for some positive constant C,
    \begin{align}
    &|u^s(x)|\leq C[x_2-f(x_1)]^{\alpha}, \label{p1-eq8} \\
    &|\nabla u^s(x)|\leq C[x_2-f(x_1)]^{\alpha-1}, \label{p1-eq9}
    \end{align}
    for $x\in \{x=(x_1,x_2)\in \mathbb{R}^2\,:\,x_1 \in \mathbb{R},f(x_1)<x_2<f(x_1)+\epsilon\}$.
    \label{thm3}
\end{theorem}

\begin{proof}
Let $E = 2|f_{+}|/3$ and let $\Omega$ be given by
\[
\Omega := \left\{ (x_1, x_2) : |x_1| < E/L,\, -L|x_1| < x_2 < E \right\}.
\]
Define $w \in C^2(\Omega) \cap C(\overline{\Omega})$ such that $\Delta w = k_{-}^2$ in $\Omega$ and $w = h$ on $\partial \Omega$, where $h \in C(\partial \Omega)$ is chosen such that $-1 \leq h \leq 0$, $h = -1$ on
\[
 \mathcal{I}_1 := \left\{ x : |x_1| = E/L,\, - E \leq x_2 \leq E \right\} \cup \left\{ x : |x_1| \leq E/L,\, x_2 = E \right\}
\]
and $h = 0$ on $ \mathcal{I}_2 := \left\{ x : |x_1| \leq E/L,\, x_2 = -L|x_1| \right\}$. By the elliptic singularity theory (see \cite{Grisvard1975}), there exists some $K > 0$ such that
\begin{equation}
\label{p1-eq7}
|w(x)| \leq K|x|^\alpha, \quad x \in \overline{\Omega}.
\end{equation}
Furthermore, by the maximum principle, we have $w \leq 0$ in $\overline{\Omega}$.

Let $b = f_+ + E$ and $C = \sup_{x \in D\backslash {U_E}  } |u^s(x)|$. For $x \in \Gamma$, let $\Omega_x = \Omega + x := \{ y + x : y \in \Omega \}$ and define $w_x \in C(\overline{\Omega_x}) \cap C^2(\Omega_x)$ by $w_x(y) = Cw(y - x)$ for $y \in \overline{\Omega_x}$. Let $v$ denote either the real or the imaginary part of $u^s$ and let $V = D \cap \Omega_x$. Then $v \in C(\overline{V}) \cap C^2(V)$, $|\Delta v| \leq Ck_{-}^2$, $|v| \leq C$ in $V$, and $v = 0$ on $\Gamma \cap \partial V$. Moreover, $\Delta w_x = Ck_{-}^2$ in $V$, $w_x = -C$ on $\partial V \cap D$, and $w_x \leq 0$ on $\Gamma \cap \partial V$. Define $v_\pm = \pm v + w_x$. Then $v_\pm \leq 0$ on $\partial V$ and $\Delta v_\pm \geq 0$ in $V$, and so, by the maximum principle, $v_\pm \leq 0$ in $\overline{V}$. Consequently, $|v| \leq -w_x$ in $\overline{V}$. Thus it follows from the equation \eqref{p1-eq7} that for $0 \leq h \leq E$ and $x\in\Gamma$,
\begin{align}
|v(x + h e_2)| \leq -w_x(x + h e_2) = -Cw(0, h) \leq CKh^\alpha, \label{p1-eq11}
\end{align}
where $e_2:=(0,1)$.
For $r > 0$, define the set
\[
D_{r} := \{x = (x_1, x_2) \in \mathbb{R}^2: x_1 \in \mathbb{R},\, f(x_1) < x_2 < f(x_1) + r\}.
\]
Then we can apply \eqref{p1-eq11} and Lemma \ref{thm1} to obtain that there exists $C_1 > 0$ such that
\begin{equation}
|u^s(x)| \leq C_1 |x_2 - f(x_1)|^\alpha, \quad x \in D_{2|f_{+}|/3}.\label{p1-eq10}
\end{equation}
This implies that \eqref{p1-eq8} holds true.

On the other hand, by Lemma \ref{thm1}, we have that for $x \in D_{|f_{+}|/2}$,
\begin{equation*}
|\nabla u^s(x)| \leq \tilde{C} [\eta(x)]^{-1} (1 + k_{-}^2) \sup_{y \in B_{\eta(x)}(x)} |u^{s}(y)|,
\end{equation*}
where $\eta(x) := \min(|f_{+}|/6, d(x)/2)$ with $d(x) := \operatorname{dist}(x, \Gamma)$. Using the equation \eqref{p1-eq10} and noting that
\begin{equation*}
(1 + L^2)^{-1/2} \leq \frac{d(x)}{x_2 - f(x_1)} \leq 1, \quad x \in D,
\end{equation*}
we obtain that for $x \in D_{|f_{+}|/2}$,  
\begin{align*}
&\sup_{y \in B_{\eta(x)}(x)} |u^s(y)| \leq \sup_{y \in B_{\eta(x)}(x)} C_{1} |y_2-f(y_1)|^{\alpha}
\\
&\leq \sup_{y \in B_{\eta(x)}(x)} C_{1} \left[x_2 - f(x_1) + |y_2-x_2| + L|x_1-y_1|\right]^{\alpha}\\
& \leq C_{1} \left(x_2 - f(x_1) +(1+L^2)^{1/2} \eta(x)\right)^{\alpha} \leq C_{1} (1+L^2)^{\alpha/2} \left(\frac{3d(x)}{2}\right)^{\alpha}.
\end{align*}
Hence, it follows that for $x \in D_{|f_{+}|/2}$, 
\begin{align*}
& |\nabla u^s(x)| \leq \tilde{C} C_1 [\eta(x)]^{-1} (1 + k_{-}^2) (1+L^2)^{\alpha/2} \left( \frac{3d(x)}{2} \right)^\alpha \\
&\leq \tilde{C} C_1 \max [ (|f_{+}|/6)^{-1}, (d(x)/2)^{-1}]  (1 + k_{-}^2) (1+L^2)^{\alpha/2} \left( \frac{3d(x)}{2} \right)^\alpha \\
& \leq \tilde{C} C_1 \max [ ((x_2-f(x_1))/3)^{-1}, ((1+L^2)^{-1/2}(x_2-f(x_1))/2)^{-1}]  (1 + k_{-}^2) (1+L^2)^{\alpha/2} \left( \frac{3d(x)}{2} \right)^\alpha \\ 
& \leq 2 \tilde{C} C_1 \max [ 3, (1+L^2)^{1/2}]  (1 + k_{-}^2)  (1+L^2)^{\alpha/2} \left(\frac{3}{2}\right)^{\alpha} (x_2-f(x_1))^{\alpha-1}  \\
&\leq  C (x_2 - f(x_1))^{\alpha - 1},
\end{align*}
where $C = 2 \tilde{C} C_1 \max [ 3, (1+L^2)^{1/2}]  (1 + k_{-}^2) (1+L^2)^{\alpha/2}   \left({3}/{2}\right)^{\alpha} $.
Therefore, the proof is complete.
\end{proof}

\subsection{The Uniqueness Result of the Problem (DBVP)}
\label{section3b}
In this subsection, we prove the uniqueness of the problem (DBVP) with the help of the a priori estimates given in Section \ref{section3a}.
We introduce some notations, which will be used in the rest of this paper.
For $a\in \mathbb{R}$ and $B,A\in \mathbb{R}$ with $B<A$, define $\Gamma_{a}(B, A) := \{x=(x_1,a)\in \mathbb{R}^2\,:\,B<x_1<A\}$ and $\Gamma(B,A) := \{x=(x_1,x_2)\in \mathbb{R}^2 \,:\,B<x_1<A,x_2=f(x_1)\}$.
For $t,a\in \mathbb{R}$ with $a>f_{+}$, define $\gamma_{a}(t):=\left\{x=(x_1,x_2)\in \mathbb{R}^2\,:\,x_1=t, f(x_1) <x_2 < a\right\}$.
Given an open set $V\subset\mathbb{R}^2$ and
$v\in L^\infty(V)$, let $\partial_j v$ ($j=1, 2$) denote the (distributional) derivative $\partial v(x)/\partial x_j$ and
we abbreviate $\partial v/\partial\nu$ (that is, the normal
derivative of $v$) as $\partial_{\nu}v$.

The following theorem presents an inequality for the solution of the problem (DBVP) with $g = 0$, which plays a crucial role in the proof of the uniqueness result.

\begin{theorem}
Assume $k_{+}> k_{-}$. Let
$u^s\in \mathscr{R}_{d}(D)$ be the solution of the problem (DBVP) with $g = 0$. Let $a>0$ and $B,A\in \mathbb{R}$ with $B<A$.  Then we have
    \begin{equation}
\int_{\Gamma(B,A)}\left|\partial_{\nu} u^s \right|^2ds \leq  C\left(I_{1}(B,A)+R_1(B,A)\right),\label{eq7}
\end{equation}
where $\nu$ denotes the unit normal on $\Gamma$ pointing out of  $D$ and where
$I_{1}(B,A)$ and $R_{1}(B,A)$
are given by
\begin{align*}
I_{1}(B,A):=&\int_{\Gamma_{a}(B,A)}\left(|\partial_{2}u^s|^2-|\partial_1u^s|^2+k_{+}^2|u^s|^2\right)ds,\\
    R_{1}(B,A) :=&\,  2\mathrm{Re}\left(\int_{\gamma_{a}(A)}-\int_{\gamma_{a}(B)}\right)\overline{\partial_{2}u^s}\partial_1 u^sds.
\end{align*}
Here, $C$ is a constant depending only on $\Gamma$.
\end{theorem}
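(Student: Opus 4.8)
The plan is to derive a Rellich-type identity on the polygonal region $T(B,A)$, whose boundary consists of four pieces: the rough-surface portion $\Gamma(B,A)$ (bottom), the interface portion $\Gamma_0(B,A)$ (top), and the two vertical segments $\gamma(B)$ and $\gamma(A)$ (sides). The standard device is to multiply the Helmholtz equation $\Delta u^s + k_-^2 u^s = 0$ (valid in $D\setminus\overline{U}_0 \supset T(B,A)$) by $2\,\mathrm{Re}(x_2\,\partial_2\overline{u^s})$ or, more precisely, to integrate the identity
\begin{equation*}
2\,\mathrm{Re}\big[\partial_2\overline{u^s}\,(\Delta u^s + k_-^2 u^s)\big] = 0
\end{equation*}
over $T(B,A)$ and apply the divergence theorem. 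Integration by parts converts this into a boundary integral over $\partial T(B,A)$ involving $|\partial_1 u^s|^2$, $|\partial_2 u^s|^2$, $k_-^2|u^s|^2$ weighted by the components of the normal $\nu$. This is exactly the computation in \cite{ZC2003} (and \cite{CZ1998}) for the single-layer rough surface problem; the only structural novelty here is that the "top" boundary is the flat interface $\Gamma_0$ rather than a plane at height $h$, and that $u^s$ is merely $C^1$ across $\Gamma_0$ (by condition (ii) of (DBVP)), which is enough for the divergence theorem to go through piece by piece.

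First I would set up the Rellich identity and collect the four boundary contributions. On $\Gamma(B,A)$, since $u^s = 0$ there (the homogeneous Dirichlet condition, $g\equiv 0$), the tangential derivative vanishes and only the normal-derivative term $|\partial u^s/\partial\nu|^2$ survives, weighted by $-\nu_2 \geq 0$ (the surface lies below, so the outward normal points downward-ish; here one uses $f_+ < 0$ and the geometry). This produces, up to a positive constant depending only on $\Gamma$ (through $\nu_2$, i.e. through $L = \|f'\|_\infty$), a term bounded below by $c\int_{\Gamma(B,A)}|\partial u^s/\partial\nu|^2\,ds$ — which is the left-hand side of \eqref{eq7}. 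On $\Gamma_0(B,A)$, the outward normal is $(0,1)$, so the contribution is $\int_{\Gamma_0(B,A)}[\,|\partial_2 u^s_+|^2 - |\partial_1 u^s_+|^2 + k_-^2|u^s_+|^2\,]\,ds$; here one either replaces $k_-^2$ by $k_+^2$ plus a lower-order term using $u^s_+ = u^s_-$ and the fact that the values on $\Gamma_0$ can be taken from the $U_0$ side, or more simply absorbs the difference $(k_+^2 - k_-^2)\int_{\Gamma_0}|u^s|^2$ into the constant and the $I_1$ term as written (the statement writes $I_1$ with $k_+^2$, consistent with evaluating from above). On the two vertical sides $\gamma(A)$ and $\gamma(B)$, the outward normals are $(\pm1,0)$, and the only surviving cross term is $2\,\mathrm{Re}(\partial_2\overline{u^s}\,\partial_1 u^s)$, giving precisely $R_1(B,A)$.

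Next I would justify the applicability of the divergence theorem. The region $T(B,A)$ is bounded with Lipschitz boundary (the graph of $f \in C^{1,1}$ is Lipschitz, the interface and vertical sides are smooth). We have $u^s \in C^2$ away from $\Gamma_0$ and $u^s \in C^1(\overline{D})$ — the latter following from Theorem \ref{thm2} / the elliptic regularity remark preceding it (using conditions (i)–(iv) with $g\equiv0$), and also from the $C^1$ matching across $\Gamma_0$ in the definition of $\mathscr{R}^D(D)$. To handle the interface cleanly one splits $T(B,A)$ into $T(B,A)\cap U_0$ and $T(B,A)\setminus \overline{U}_0$, applies the divergence theorem on each (where $u^s$ is genuinely $C^2$ up to the boundary by interior elliptic regularity plus $C^1$ boundary regularity), and observes that the two integrals over $\Gamma_0(B,A)$ cancel because $u^s_+ = u^s_-$ and $\partial_2 u^s_+ = \partial_2 u^s_-$ force the full gradient to match (the tangential derivative $\partial_1 u^s$ matches automatically from continuity of $u^s$ along $\Gamma_0$); this is where condition (ii) of (DBVP) is essential. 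The contribution that remains on $\Gamma_0$ is then a single copy, with wave number $k_-$ on the lower side — and since $T(B,A) \subset D\setminus\overline{U}_0$, in fact the relevant equation is the $k_-$ Helmholtz equation throughout $T(B,A)$, so no splitting across $\Gamma_0$ is actually needed for the PDE, only for bookkeeping of where $u^s$ is $C^2$; I would phrase the argument to use $T(B,A)\subset D\setminus\overline U_0$ directly and note $u^s\in C^1(\overline{T(B,A)})$ suffices together with $\Delta u^s \in L^\infty_{\mathrm{loc}}$.

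The main obstacle I anticipate is not the Rellich computation itself — that is essentially bookkeeping — but ensuring enough regularity of $u^s$ up to $\Gamma(B,A)$ to run integration by parts there and to make sense of $\int_{\Gamma(B,A)}|\partial u^s/\partial\nu|^2\,ds$ in the first place. Near the rough boundary Theorem \ref{thm2} only gives gradient estimates on $D^\epsilon$ (away from $\Gamma$), and Theorem \ref{thm3} gives $|\nabla u^s(x)| \leq C[x_2 - f(x_1)]^{\alpha-1}$ with $\alpha > 1/2$, so $|\nabla u^s|^2$ is integrable up to $\Gamma$ (exponent $2(\alpha-1) > -1$) but $\nabla u^s$ itself need not be continuous up to $\Gamma$. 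The standard remedy, following \cite{CZ1998, ZC2003}, is to apply the divergence theorem on the shrunk region $T_\delta(B,A) := \{x\in T(B,A): x_2 > f(x_1) + \delta\}$, obtain the identity there, and let $\delta \to 0^+$: the boundary term on $\{x_2 = f(x_1)+\delta\}$ converges to the term on $\Gamma(B,A)$ by dominated convergence using the Theorem \ref{thm3} bound, and the interior integral converges since $\Delta u^s$ is bounded. I would also need to check that the constant $c$ extracted from $-\nu_2$ on $\Gamma(B,A)$ is bounded below away from zero uniformly in $x_1$; this holds because $\nu_2 = -1/\sqrt{1+f'(x_1)^2} \leq -1/\sqrt{1+L^2} < 0$, giving a constant depending only on $L$, hence only on $\Gamma$, as claimed.
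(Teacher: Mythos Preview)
Your proposal is correct and follows essentially the same route as the paper: apply the Rellich identity $2\,\mathrm{Re}[\partial_2\overline{u^s}(\Delta u^s+k_-^2u^s)]=0$ on $T(B,A)$, use the divergence theorem, and read off the boundary contributions on $\Gamma(B,A)$, $\Gamma_0(B,A)$, $\gamma(A)$, $\gamma(B)$; then use $u^s=0$ on $\Gamma$ together with $-\nu_2\geq 1/\sqrt{1+L^2}$, the transmission condition on $\Gamma_0$, and the inequality $k_-^2\leq k_+^2$ to arrive at the stated bound. Two minor simplifications relative to your write-up: since $T(B,A)\subset D\setminus\overline{U}_0$ lies entirely on one side of $\Gamma_0$, no splitting across $\Gamma_0$ is needed at all (you note this yourself); and the paper bypasses your $\delta$-shrinking argument near $\Gamma$ by invoking the remark, stated just before the subsection, that elliptic regularity gives $u^s\in C^1(\overline{D})$ when $g\equiv 0$, so the divergence theorem applies directly on $T(B,A)$.
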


\begin{proof}
Define $T(B,A) := \left\{x\in  D\backslash \overline{U}_{0}\,:\,B<x_1<A\right\}$ for $B<A$ and let $\partial T(B,A)$ be the boundary of $T(B,A)$. Let $\nu = (\nu_1,\nu_2)$ denote the outward unit normal on $\partial T(B,A)$.
Noting that Rellich's type identity $2\mathrm{Re}[\overline{\partial_2 u^s}(\Delta u^s+k_{-}^2u^s)] =2\mathrm{Re}[\nabla\cdot (\overline{\partial_2 u^s}\nabla u^s)]-\partial_2(|\nabla u^s|^2)+k_{-}^2\partial_2(|u^s|^2)$, we find, by applying the divergence theorem in $T(B,A)$, that
\begin{equation}
    0=\int_{\partial T(B,A)}\left(2 \mathrm{Re}(
        \overline{\partial_2u^s}\nabla u^s)\cdot  \nu - |\nabla u^s|^2\nu_2+k_{-}^2|u^s|^2\nu_2\right)ds=L_1+L_2+L_3,\label{eq201}
\end{equation}
where
$L_1$, $L_2$ and $L_3$ are given by
\begin{align*}
    L_1:&= \int_{\Gamma_{0}(B,A)}\left(|\partial_2 u^s|^2-|\partial_{1}u^s|^2+k_{-}^2|u^s|^2\right)ds,\\
    L_2:&= 2\mathrm{Re}\left(\int_{\gamma_{0}(A)}-\int_{\gamma_{0}(B)}\right)\overline{\partial_2 u^s}\partial_1 u^sds, \\
    L_3:&=\int_{\Gamma(B,A)}\left(2 \mathrm{Re}(
        \overline{\partial_2u^s}\nabla u^s)\cdot  \nu - |\nabla u^s|^2\nu_2+k_{-}^2|u^s|^2\nu_2\right)ds,
\end{align*}
Furthermore, by using the identity $2\mathrm{Re}[\overline{\partial_2 u^s}(\Delta u^s+k_{+}^2u^s)] =2\mathrm{Re}[\nabla\cdot (\overline{\partial_2 u^s}\nabla u^s)]-\partial_2(|\nabla u^s|^2)+k_{+}^2\partial_2(|u^s|^2)$ in the domain $(B,A)\times(0,a)$ and the fact that $k_{+}>k_{-}$, we obtain that
\begin{equation}
L_1\leq \int_{\Gamma_{0}(B,A)}\left(|\partial_2 u^s|^2-|\partial_{1}u^s|^2+k_{+}^2|u^s|^2\right)ds=L_4+I_{1}(B,A),\label{eq202}
\end{equation}
where
\begin{equation*}
L_4:= 2\mathrm{Re}\left(\int_{\gamma_{a}(A)\backslash \gamma_{0}(A)}-\int_{\gamma_{a}(B)\backslash \gamma_{0}(B)}\right)\overline{\partial_2 u^s}\partial_1 u^sds.
\end{equation*}
Thus, combining \eqref{eq201} and \eqref{eq202}, we have that  $0\leq I_1(B,A)+R_{1}(B,A)+L_3$.

It follows from Theorems \ref{thm2} and \ref{thm3} that the integral $R_1(B,A)$ is well-defined and thus we have
\begin{equation}
    \sup_{t\in \mathbb{R}}\left|2\mathrm{Re}\int_{\gamma_{a}(t)}\partial_2\overline{u^s}\partial_1u^sds\right|<\infty.
    \label{eq8}
\end{equation}
By the boundary condition of $u^s$, we have $\nabla u^s(x) = \partial_{\nu}u^s\cdot \nu(x)$ on $\Gamma$. Thus we can deduce that $|\nabla u^s|^2 = |\partial_{\nu}u^s|^2$ and $\partial_2 u^s(x) = \partial_{\nu}u^s \nu_2$ on $\Gamma$. This, together with the fact that $\nu_2 = -1/\sqrt{1+|f'|^2} \leq -1/\sqrt{1+L^2}$ on $\Gamma$, implies that
\begin{equation*}
   L_3 = \int_{\Gamma(B,A)}|\partial_{\nu}u^s|^2\nu_2ds \leq -1/\sqrt{1+L^2} \int_{\Gamma(B,A)}|\partial_{\nu}u^s|^2 ds.
\end{equation*}

 Therefore, from the above discussions, it follows that \eqref{eq7} holds. This completes the proof.
\end{proof}

\begin{remark}
Taking $A=j$ and $B = j-1$ with $j\in \mathbb{Z}$ in \eqref{eq7}, we can apply Theorem \ref{thm2} as well as the formulas (\ref{eq3}) and (\ref{eq8}) to obtain that $\partial_\nu u^s$ in $L_{loc}^2(\Gamma)$, where $L_{loc}^2(\Gamma)$ denotes the space of all functions $g:\Gamma\rightarrow\mathbb{C}$ such that $g\in L^{2}(\Gamma(B,A))$ for all $B,A\in\mathbb{R}$ with $B<A$. Moreover, we have
\begin{equation}
    \sup_{j\in \mathbb{Z}}\int_{\Gamma(j-1,j)}|\partial_\nu u^s|^2ds<\infty.
    \label{eq9}
\end{equation}
\label{thm37}
\end{remark}

Next, we show that the solution of the problem (DBVP) with $g=0$ can be written as an integral relevant to its normal derivative on $\Gamma$.
For this purpose, we define $\Gamma(A):=\Gamma(-A,A)=\left\{x=(x_1,x_2):x\in \Gamma, |x_1|<A\right\}$ for $A>0$ and introduce the following definition.

\begin{definition}Given a domain $G \subset \mathbb{R}^2$ and $\kappa>0$, call $v \in C^2(G) \cap L^{\infty}(G)$  a radiating solution of the Helmholtz equation in $G$ if $\Delta v+\kappa^2 v=0$ in $G$ and
\begin{align*}
v(x)=O\left(r^{-1 / 2}\right)&, \\
\frac{\partial v(x)}{\partial r}-i \kappa v(x)=o\left(r^{-1 / 2}\right)&
\end{align*}
as $r=|x| \rightarrow \infty$, uniformly in $x /|x|$.
\end{definition}

\begin{theorem}
    Let $u^s\in \mathscr{R}_{d}(D)$ be the solution of the problem (DBVP) with $g =0$. Then
    \begin{equation}\label{eq143}
        u^s(x) =  \int_{\Gamma}\partial_{\nu}u^s(y)G(x,y)\,ds(y),\quad x\in D,
        \end{equation}
    where $\nu$ denotes the unit normal on $\Gamma$ pointing out of $D$.
    \label{thm6}
\end{theorem}

\begin{proof}
First, we consider the case when $x=(x_1,x_2) \in D\backslash \overline{U}_{0}$. Let $A>0$ and define the domain
    \begin{equation}
        T_{A}^{\epsilon} := \left\{x\,:\,|x_1|<A, x\in D\backslash \overline{U}_{0} \right\}\backslash \overline{B_{\epsilon}(x)},\label{eq206}
    \end{equation}
    where $B_{\epsilon}(x)$ denotes the ball centered at $x$ with radius $\epsilon$ small enough such that $\overline{B_{\epsilon}(x)}\subset D\backslash \overline{U_0}$.
    Since $u^s\in C^{1}(\overline{D}\backslash U_{|f_{+}|/2})$, it follows from Green's theorem that
    \begin{align*}
      0 &= \int_{\partial T_{A}^{\epsilon}}\left(u^s(y)\frac{\partial G(x,y)}{\partial \nu(y)}-\frac{\partial u^s}{\partial \nu}(y)G(x,y) \right)ds(y) \\ &=\left( \int_{S_{\epsilon}(x)}+\int_{\gamma_{0}(-A)}+\int_{\gamma_{0}(A)} +\int_{\Gamma_{0}(A)}+\int_{\Gamma(A)}\right) \left(u^s(y)\frac{\partial G(x,y)}{\partial \nu(y)}-\frac{\partial u^s}{\partial \nu}(y)G(x,y) \right)ds(y) \\
      &=:L_1+L_2+L_3+L_4+L_5,
    \end{align*}
    where $\nu$ denotes the outward unit normal on $\partial T_{A}^{\epsilon}$.
    By the mean value theorem and the formula (\ref{eq63}), we obtain that
    \begin{equation*}
        \lim_{\epsilon\to 0+}L_1=\lim_{\epsilon\to 0+}\int_{\partial B_{\epsilon}(x)}\left(u^s(y)\frac{\partial G(x,y)}{\partial \nu(y)}-\frac{\partial u^s}{\partial \nu}(y)G(x,y)\right)ds(y)= u^s(x).
    \end{equation*}
    By the estimates in (\ref{eq191}) as well as Theorems \ref{thm2} and \ref{thm3}, it follows that
    \begin{equation}
        \lim_{A\to +\infty}(L_2+L_3)=\lim_{A\to \infty}\left(\int_{\gamma_{0}(-A)}+\int_{\gamma_{0}(A)}\right)\left( u^s(y)\frac{\partial G(x,y)}{\partial \nu(y)}-\frac{\partial u^s}{\partial \nu}(y)G(x,y) \right)ds(y) = 0.\label{eq203}
    \end{equation}
    Using the transmission boundary conditions of $u^s(x)$ and $G(x,y)$ on the interface $\Gamma_{0}(A)$, we obtain
    \begin{align*}
        L_4&=\int_{\Gamma_{0}(A)}\left( u^s|_{-}(y)\frac{\partial G(x,y)}{\partial y_2}-\frac{\partial u^s}{\partial y_2}\big|_{-}(y) G(x,y) \right)ds(y)  \\&=
        \int_{\Gamma_{0}(A)}\left( u^s|_{+}(y)\frac{\partial G(x,y)}{\partial y_2}-\frac{\partial u^s}{\partial y_2}\big|_{+} (y) G(x,y) \right)ds(y),
    \end{align*}
    where '+/-' are the limits given as in \eqref{eq172}.
    With the help of Theorems \ref{thm38} and \ref{thm2}, we can apply Green's theorem in the domain $\left\{x\,:\,|x_1|<A,0<x_{2}<d\right\}$ with $d>0$ to obtain that
    \begin{align*}
         \lim_{A\to +\infty} \int_{\Gamma_{0}(A)}\left( u^s|_{+}(y)\frac{\partial G(x,y)}{\partial y_2}-\frac{\partial u^s}{\partial y_2}\big|_{+} (y) G(x,y) \right)ds(y)
       \notag \\
        =\int_{\Gamma_{d}}\left( u^s(y)\frac{\partial G(x,y)}{\partial y_2}-\frac{\partial u^s}{\partial y_{2}}(y)G(x,y) \right)ds(y).
    \end{align*}
    From the definition of the two-layered Green function given in \eqref{eq166}--\eqref{eq22} and the estimates in \eqref{eq192}, together with the symmetry property  $G(x_0,y_0)=G(y_0,x_0)$ for $x_0,y_0\in \mathbb{R}^2\backslash\Gamma_0$ with $x_0\neq y_0$ (see \cite[(2.28)]{P2016}), we have that $G(x,\cdot)$ is a radiating solution of $\Delta G(x,\cdot)+k_{+}^2 G(x,\cdot)=0$ in $U_{0}$ and that $G(x,\cdot)|_{\Gamma_{d}}$ and $\partial_{y_2} G(x,\cdot)|_{\Gamma_{d}}$ belong to $L^1(\Gamma_{d})$. Note that $u^s$ satisfies the upward propagating radiation condition (\ref{eq1}) in $U_{0}$. Hence we can employ \cite[Lemma 2.1]{CZ1998} to obtain
    \begin{equation}  \lim_{A\to \infty} L_4 =\int_{\Gamma_{d}}\left( u^s(y)\frac{\partial G(x,y)}{\partial \nu(y)}-\frac{\partial u^s}{\partial \nu}(y)G(x,y) \right)ds(y) = 0.\label{eq205}
    \end{equation}
    From the facts that $u^s=0$ on $\Gamma$ and $u^s\in C^{1}(\overline{D}\backslash U_{|f_{+}|/2})$, together with (\ref{eq9}) and the estimates in (\ref{eq191}), we can deduce that
    \begin{equation*}
      \lim_{A\to \infty} L_5 =  -\lim_{A\to \infty}\int_{\Gamma(A)}\frac{\partial u^s}{\partial \nu}(y)G(x,y)ds(y)=-\int_{\Gamma}\frac{\partial u^s}{\partial \nu}(y)G(x,y) ds(y).
        \end{equation*}
    By using the above discussions, we obtain that the formula (\ref{eq143}) holds for $x\in D\backslash \overline{U}_{0}$.

    Second, by the dominated convergence theorem and the above discussions, it easily follows that the formula (\ref{eq143}) holds for $x\in \Gamma_{0}$.
    Moreover, using similar arguments as above, we can deduce that \eqref{eq143} also holds for $x \in U_{0}$.
\end{proof}

To proceed further, we need the following three lemmas. Lemma \ref{thm5} can be found in \cite{CZ1998}. In what follows, let $L_{loc}^2(\mathbb{R})$ denote the space of all functions $g:\mathbb{R}\rightarrow\mathbb{C}$ such that $g\in L^{2}(B,A)$ for all $B,A\in\mathbb{R}$ with $B<A$. 

\begin{lemma}[Lemma A in \cite{CZ1998}] Suppose that $F \in L^{2}_{loc}(\mathbb{R})$ and that, for some nonnegative constants $M$, $C$, $\epsilon$, and $A_{0}$,
\begin{equation*}
\int_{j-1}^{j}\left|F(t)\right|^{2} d t \leq M^{2}, \quad j \in \mathbb{Z},
\end{equation*}
and
\begin{equation*}
\int_{-A}^{A}|F(t)|^{2} dt\leq C \int_{\mathbb{R} \backslash[-A, A]} G_{A}^{2}(t) d t+C \int_{-A}^{A}\left(G_{\infty}(t)-G_{A}(t)\right) G_{\infty}(t) d t+\epsilon, \quad A>A_{0},
\end{equation*}
where, for $0<A \leq+\infty$,
\begin{equation*}
G_{A}(s):=\int_{-A}^{A}(1+|s-t|)^{-3 / 2}|F(t)| d t, \quad s \in \mathbb{R} .
\end{equation*}
Then $F \in L^{2}(\mathbb{R})$ and
\begin{equation*}
\int_{-\infty}^{+\infty}|F(t)|^{2} d t \leq \epsilon.
\end{equation*}
\label{thm5}
\end{lemma}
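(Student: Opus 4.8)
The plan is to follow the iteration scheme of Chandler-Wilde and Zhang for the classical rough-surface Dirichlet problem, adapted to the present two-layered setting. The statement to be proved is Lemma~\ref{thm5}, a purely one-dimensional functional-analytic fact about functions $F\in L^2_{\mathrm{loc}}(\mathbb{R})$ satisfying a uniform local $L^2$ bound together with a self-improving integral inequality; its role in the paper is to upgrade the a~priori information $\sup_j\int_{\Gamma(j-1,j)}|\partial_\nu u^s|^2\,ds<\infty$ and an estimate like (\ref{eq7}) into $\partial_\nu u^s\in L^2(\Gamma)$ with vanishing norm. Since this lemma is quoted verbatim from \cite[Lemma~A]{CZ1998}, the natural proof is to reproduce that argument; I sketch it below.

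First I would fix $A>A_0$ and split the kernel estimate $(1+|s-t|)^{-3/2}$ to control $G_\infty-G_A$, the ``tail'' contribution. The key analytic input is that the convolution operator with kernel $(1+|\cdot|)^{-3/2}$ is bounded on $L^2(\mathbb{R})$ (its kernel is in $L^1$), so $\|G_\infty\|_{L^2(\mathbb{R})}\le \|(1+|\cdot|)^{-3/2}\|_{L^1}\,\|F\|_{L^2(\mathbb{R})}$ \emph{once we know} $F\in L^2(\mathbb{R})$; the difficulty is precisely that square-integrability of $F$ is what we are trying to prove, so a priori only the local bound $M$ is available. The device to get around this is to estimate $G_A(s)$ for $s$ in a bounded interval using only the local bound: for $|s|\le A$ one has $\int_{-A}^A (1+|s-t|)^{-3/2}|F(t)|\,dt \le M\big(\sum_{j}(1+\mathrm{dist}(s,[j-1,j]))^{-3/2}\big)^{1/2}\cdot(\text{finite})$ by Cauchy--Schwarz on each unit interval, giving a bound independent of $A$. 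One then plugs these bounds into the hypothesised inequality $\int_{-A}^A|F|^2 \le C\int_{\mathbb{R}\setminus[-A,A]}G_A^2 + C\int_{-A}^A (G_\infty-G_A)G_\infty + \epsilon$ and shows the first two terms on the right are $o\big(\int_{-2A}^{2A}|F|^2\big)$ or are absorbed, so that setting $\psi(A):=\int_{-A}^A|F|^2\,dt$ one obtains an inequality of the form $\psi(A)\le \tfrac12\psi(cA)+(\text{bounded})+\epsilon$ for a fixed dilation factor, or more precisely a Gronwall-type recursion forcing $\psi$ to be bounded. Boundedness of $\psi$ together with the recursion then pushes the bounded part to zero as $A\to\infty$, yielding $\int_{\mathbb{R}}|F|^2\le \epsilon$.

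The main obstacle is the bootstrapping: one cannot use $L^2$-boundedness of the convolution until $F\in L^2$ is known, so the argument must be arranged as a closed loop where the local bound $M$ controls the tail terms, the resulting recursion delivers a uniform bound on $\psi(A)$, and only then does one re-run the convolution estimate to extract the sharp conclusion. Carefully tracking that the ``error'' in the recursion genuinely decays (rather than merely staying bounded) is the delicate point; it hinges on the strict inequality $3/2>1$, which makes the tail $\int_{|t|>A}(1+|s-t|)^{-3/2}\,dt$ decay in $A$ uniformly for $|s|\le A/2$, say. Since the present paper's Green function $G(x,y)$ satisfies exactly the decay (\ref{eq4}) with exponent $3/2$ — the same exponent appearing in \cite[Lemma~A]{CZ1998} — no change to the argument is needed, and I would simply cite \cite{CZ1998} for the detailed computation while indicating the above structure. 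Thus the ``proof'' is: quote \cite[Lemma~A]{CZ1998}, or equivalently carry out the three-step loop (local bound controls tails; hypothesised inequality yields a self-similar recursion; recursion forces $\psi$ bounded and then $\int_{\mathbb{R}}|F|^2\le\epsilon$).
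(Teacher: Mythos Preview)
Your proposal is correct and matches the paper's own treatment exactly: the paper does not prove Lemma~\ref{thm5} at all but simply states that it ``can be found in the reference \cite[Lemma A]{CZ1998}'' and quotes it without proof. Your recognition that the right move is to cite \cite[Lemma~A]{CZ1998}, together with your optional sketch of the bootstrapping argument behind it, is precisely what is needed here.
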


The following lemma gives some properties of the two-layered Green function, which will be used in this subsection, in Section \ref{section3c} and in Appendix \ref{section7}.

\begin{lemma}\label{thm36} Assume  $k_{+},k_{-}>0$ with $k_{+}\neq k_{-}$. Define $x=(x_1,x_2)\in \mathbb{R}^2$ and $y=(y_1,y_2)\in \mathbb{R}^2$. Then we have  the following statements.

{\rm (i)} For $y\in \mathbb{R}^{2}_{-}$, there hold $G(\cdot,y)|_{\overline{U_{0}}}\in C^{1}(\overline{U_{0}})$,  $\nabla_{y}G(\cdot,y)|_{\overline{U_{0}}}\in C^{1}(\overline{U_{0}})$, $G(\cdot,y)|_{\mathbb{R}^2\backslash (U_{0}\cup \{y\})}\in C^{1}(\mathbb{R}^2\backslash (U_{0}\cup \{y\}))$ and  $\nabla_{y}G(\cdot,y)|_{\mathbb{R}^2\backslash (U_{0}\cup \{y\})}\in C^{1}(\mathbb{R}^2\backslash (U_{0}\cup \{y\}))$.

{\rm (ii)} Let $h_{0},h_{1},\delta>0$.
There hold
\begin{equation}
|\nabla_{x}G(x,y)|,~|\nabla_{x}\nabla_{y}G(x,y)|\leq C|x_1-y_1|^{-\frac{3}{2}}\label{eq189}
\end{equation}
for all $x,y\in \mathbb{R}^2$ satisfying $|x_2|\leq h_{0}$, $0<|y_2|<h_{1}$ and $|x_1-y_1|>\delta$, where the constant $C$ depends only on $h_{0},h_{1},k_{\pm},\delta$.

{\rm (iii)} Let $K$ be a bounded domain such that $\overline{K}\subset \mathbb{R}^2_-$. Then we have that $G(x,y)$ and $\partial_{y_i}G(x,y)$ ($i=1,2$) satisfy the Sommerfeld radiation condition \eqref{eq22} uniformly for all $\hat{x}\in \mathbb{S}^1_+$ and $y\in K$.
\end{lemma}

\begin{proof}
The statement (i) can be directly deduced by the expression \eqref{eq63} of G(x,y) and Lebesgue's dominated convergence theorem.

For the statement (ii),  we only derive the estimate for $\nabla_{x}\nabla_{y}G(x,y)$, since the estimate for $\nabla_{x}G(x,y)$ can be deduced in a similar manner.
We choose $x_{0}=(x_{0}^{(1)},x_{0}^{(2)})$ and $y_{0}=(y_{0}^{(1)},y_{0}^{(2)})$ in $\mathbb{R}^2$ such that $|x_{0}^{(2)}|\leq h_{0}$, $0<|y_{0}^{(2)}|<h_{1}$ and $|x_{0}^{(1)}-y_{0}^{(1)}|>\delta$. Then by the expression \eqref{eq63} of $G(x,y)$, together with the integral representation of Hankel function given in \eqref{eq156}, it can be verified that $\partial_{y_{i}}G(x,y_{0})$, $i=1,2$, satisfies the Helmholtz equations in $\mathbb{R}^{2}_{\pm}$ with the wave numbers $k_{\pm}$, respectively, and satisfies the transmission boundary condition on $\Gamma_{0}$, i.e.,
\begin{align*}
&\partial_{y_{i}}G(x,y_{0})|_{x_2\to 0+}=\partial_{y_{i}}G(x,y_{0})|_{x_2\to 0-},\\
&\partial_{x_2}\partial_{y_{i}}G(x,y_{0})|_{x_2\to 0+}=\partial_{x_2}\partial_{y_{i}}G(x,y_{0})|_{x_2\to 0-}
\end{align*}
for $i=1,2$.
Thus, taking $\epsilon$ such that $0<\epsilon<\delta/2$, we obtain that $\partial_{y_{i}}G(x,y_0),i=1,2$, satisfies $\Delta v(x)=f(x)$ in $B_{\epsilon}(x_0)$ in the distributional sense with $v(x):=\partial_{y_{i}}G(x,y_0)$ and $f(x):=-k^2(x)\partial_{y_i}G(x,y)$, where $B_{\epsilon}(x_0)$ is a ball with center $x_0$ and radius $\epsilon$. Hence, using Lemma \ref{thm1} for $\partial_{y_{i}}G(x,y)$ $(i=1,2)$ in $B_{\epsilon}(x_0)$ and applying the statements (i) and (ii) in Theorem \ref{thm38}, we obtain
\begin{align*}
&|\nabla_{x}\nabla_{y}G(x_{0},y_{0})|\leq \epsilon^{-1}\sup_{x\in B_{\epsilon}(x_0)}(1+\max (k_{+},k_{-})\epsilon^2)|\nabla_{y}G(x,y)|\\
&\leq \epsilon^{-1}(1+\max (k_{+},k_{-})\epsilon^2)\sup_{x\in B_{\epsilon}(x_0)}C(1+|x_2|)(1+|y_2|)(|x-y_0|^{-3/2}+|x-y_0'|^{-3/2})\\
&\leq \tilde{C} |x_{0}^{(1)}-y_{0}^{(1)}|^{-3/2},
\end{align*}
where $y_{0}'=(y_{0}^{(1)},-y_{0}^{(2)})$ and the constant $\tilde{C}$ depends only on $h_{0},h_{1},k_{\pm},\delta$. This completes the proof.

Finally,
by employing similar arguments as in the proofs of Theorems 2.1 and 2.14 in \cite{LYZZ2022}, we can use patient calculations to obtain that the statement (iii) holds true.
\end{proof}

The following lemma has been proved in \cite{CZ1998a}.

\begin{lemma}[Lemma 6.1 in \cite{CZ1998a}]
\label{thm39}Let $h>0$. If $\phi \in L^2(\Gamma_{h})\cap L^{\infty}(\Gamma_{h})$ and $v$ is defined by \eqref{eq1}, then $v|_{\Gamma_{a}}$, $\partial_{1}v|_{\Gamma_{a}}$ and $\partial_{2}v|_{\Gamma_{a}}$ are in $L^{2}(\Gamma_{a})\cap BC(\Gamma_{a})$ for all $a>h$ and
\begin{align}
&\int_{\Gamma_{a}}[|\partial_{2}v|^2-|\partial_{1}v|^2+k_{+}^2|v|^2]ds\leq 2k_{+}\mathrm{Im}\int_{\Gamma_{a}}\overline{v}\partial_{2}v ds,\label{eq198} \\
&\mathrm{Im}\int_{\Gamma_{a}}\overline{v}\partial_{2}v ds\geq 0.\label{eq199}
\end{align}
\end{lemma}

Now, we assume that  $k_+$ > $k_-$ and $u^s\in \mathscr{R}_{d}(D)$ is the solution of the problem (DBVP) with $g =0$. We proceed to show that $\partial_\nu u^s$ vanishes on $\Gamma$.
Let $A>0$ and $a>0$. Then we can set $B=-A$ in the formula (\ref{eq7}) to obtain that
\begin{equation*}
K(A):=\int_{\Gamma(A)}|\partial_\nu u^s|^2ds\leq C\left(I_{1}(A)+R_1(A)\right),
\end{equation*}
where $I_{1}(A) := I_{1}(-A,A)$ and $R_1(A):=R_1(-A,A)$.
Let $v_{dir}$ be defined by
\begin{equation}
    v_{dir}(x): = \int_{\Gamma(A)}\partial_\nu u^s(y)G(x,y)ds(y),\quad x \in D.
    \label{eq11}
\end{equation}
 By employing Lemma \ref{thm34}, \eqref{eq192} and the property of $\partial_{\nu}u^{s}$ given in Remark \ref{thm37}, it can be derived that $v_{dir}|_{\Gamma_{b}}\in BC(\Gamma_{b})\cap L^2(\Gamma_{b})$ for all $b>0$.  On the other hand, it is easy to see from \eqref{eq166}, \eqref{eq192} and the statement (iii) of Lemma \ref{thm36} that $v_{dir}$ is a radiating solution of $\Delta v_{dir}+k_{+}^2 v_{dir}=0$ in $U_{b}$ for all $b>0$. Thus, in view of the equivalence of the statements (ii) and (iv) in \cite[Theorem 2.9]{CZ1998a}, $v_{dir}$ satisfies \eqref{eq1} with $h=b$ and $\phi=v_{dir}|_{\Gamma_{b}}$ for every $b>0$. Hence, by employing \eqref{eq198}, we have $I_{1}''(A)\leq 2k_{+}J''(A)$, where
\begin{equation*}
    I_{1}''(A) := \int_{\Gamma_{a}}\left(|\partial_{2}v_{dir}|^2-|\partial_{1}v_{dir}|^2+k_{+}^2|v_{dir}|^2\right)ds, \quad J''(A):= \mathrm{Im}\int_{\Gamma_{a}}\overline{v_{dir}}\partial_{2}v_{dir}ds.
\end{equation*}
By applying Green's theorem in the domain $\left\{x=(x_1,x_2)\,:\, x\in D\backslash \overline{U}_{0},|x_1|<A \right\}$ and in the domain $\{x=(x_1,x_2)\,:\,| x_{1}|< A, 0< x_{2}< a\}$, we can use the conditions (ii) and (iii) in the problem (DBVP) to find that $J(A)=R_{2}(A)$, where
\begin{equation*}
    J(A):=\mathrm{Im} \int_{\Gamma_{a}}\overline{u^{s}}\partial_{2}u^{s}ds,\quad R_{2}(A):=\mathrm{Im}\left(\int_{\gamma_{a}(-A)}-\int_{\gamma_{a}(A)}\right)\overline{u^{s}}\partial_1 u^{s} ds.
\end{equation*}
Thus, from the above discussions, we can derive
\begin{align*}
    K(A)\leq C\left(I_{1}(A)-I_{1}''(A)+2k_{+}\left(J''(A)-J(A)\right)+R_{1}(A)+2k_{+}R_{2}(A)\right).
\end{align*}
Set
\begin{align*}
    I_{1}'(A) := \int_{\Gamma_{a}(A)}\left(|\partial_{2}v_{dir}|^2-|\partial_{1}v_{dir}|^2+k_{+}^2|v_{dir}|^2\right)ds, \quad J'(A):= \mathrm{Im}\int_{\Gamma_{a}(A)}\overline{v_{dir}}\partial_{2}v_{dir}ds
\end{align*}
and $w\left(x_{1}\right):=\partial_{\nu} u^s\left(x_{1}, f\left(x_{1}\right)\right), x_{1} \in \mathbb{R}$. Then for all $A>0$,
\begin{equation*}\int_{-A}^{A}\left|w\left(x_{1}\right)\right|^{2} d x_{1} \leq  \int_{\Gamma(A)}|\partial_{\nu}u^s|^2ds \leq\left(1+L^{2}\right)^{1 / 2} \int_{-A}^{A}\left|w\left(x_{1}\right)\right|^{2} d x_{1}.\end{equation*}
By the formulas (\ref{eq143}) and (\ref{eq11}) and the estimates  \eqref{eq192} and \eqref{eq189}, we obtain that
\begin{align*}
    |v_{dir}(x)|,~|\nabla v_{dir}(x)| & \leq C_{a}\left(1+L^{2}\right)^{1 / 2} W_{A}\left(x_{1}\right), \quad x \in \Gamma_{a}, \\
    |u^s(x)-v_{dir}(x)|,~|\nabla u^s(x)-\nabla v_{dir}(x)| & \leq C_{a}\left(1+L^{2}\right)^{1 / 2}\left(W_{\infty}\left(x_{1}\right)-W_{A}\left(x_{1}\right)\right), \quad x \in \Gamma_{a},
\end{align*}
where the constant $C_a$ is independent of $x_1$ but dependent on $a$ and where $W_{A}(x_1)$ and $W_{\infty}(x_1)$ are defined by
\begin{align*}
&W_{A}\left(x_{1}\right):=\int_{-A}^{A}\left(1+\left|x_{1}-y_{1}\right|\right)^{-3 / 2}\left|w\left(y_{1}\right)\right| d y_{1}, \quad x_{1} \in \mathbb{R},\\
&W_{\infty}\left(x_{1}\right):=\int_{-\infty}^{+\infty}\left(1+\left|x_{1}-y_{1}\right|\right)^{-3 / 2}\left|w\left(y_{1}\right)\right| d y_{1}, \quad x_{1} \in \mathbb{R}.
\end{align*}
These lead to
\begin{align*}
&\left|I_{1}'(A)-I_{1}''(A)\right|,~
\left|J'(A)-J''(A)\right| \leq C \int_{\mathbb{R} \backslash[-A, A]}\left(W_{A}\left(x_{1}\right)\right)^{2} d x_{1}, \\
&\left|I_{1}(A)-I_{1}'(A)\right|,~
\left|J(A)-J'(A)\right| \leq 2 C \int_{-A}^{A}\left(W_{\infty}\left(x_{1}\right)-W_{A}\left(x_{1}\right)\right) W_{\infty}\left(x_{1}\right) d x_{1},
\end{align*}
where $C=C_{a}^{2}\left(1+L^{2}\right)\left(2+k_{-}^{2}\right)$. Hence, there exists a constant $C>0$ such that for all $A>0$,
\begin{align*}
    \int_{-A}^{A}\left|w\left(x_{1}\right)\right|^{2} d x_{1} & \leq C \left( \int_{\mathbb{R} \backslash[-A, A]}\left(W_{A}\left(x_{1}\right)\right)^{2} d x_{1}\right.\\
      & \quad \left.+  \int_{-A}^{A}\left(W_{\infty}\left(x_{1}\right)-W_{A}\left(x_{1}\right)\right) W_{\infty}\left(x_{1}\right) d x_{1}+|R_1(A)|+2k_{+}|R_2(A)| \right).
\end{align*}
Combining this with \eqref{eq9} and the fact that $\partial_\nu u^s \in L_{loc}^2(\Gamma)$ (see Remark \ref{thm37}), we can apply Lemma \ref{thm5} to conclude that  $w\in L^2(\mathbb{R})$ (which is equivalent to $\partial_{\nu}u^s\in L^2(\Gamma)$), and that for all $A_{0}>0$,
\begin{equation}
    \left(1+L^{2}\right)^{-1 / 2} \int_{\Gamma}\left|\partial_{\nu} u^s\right|^{2} d s \leq \int_{-\infty}^{\infty}\left|w\left(x_{1}\right)\right|^{2} d x_{1} \leq C \sup _{A>A_{0}}\big(\left|R_1(A)\right|+2k_{+}|R_2(A)|\big).
    \label{eq12}
\end{equation}
For $x\in D\backslash \overline{U}_{a}$ with $|x_{1}|\geq 1$, we deduce by \eqref{eq143} and \eqref{eq192} that
    \begin{align*}
        \left|u^s(x)\right|^{2} &\leq 2 \left(\int_{\Gamma \backslash \Gamma\left(\left|x_{1}\right| / 2\right)}\left|\partial_{\nu} u^s(y)  G(x, y)\right| d s(y)\right)^{2}+2\left(\int_{\Gamma\left(\left|x_{1}\right| / 2\right)}\left|\partial_{\nu} u^s(y)G(x, y)\right| d s(y)\right)^{2} \\
        &\leq C_{1}  \int_{\Gamma \backslash \Gamma\left(\left|x_{1}\right| / 2\right)}\left|\partial_{\nu} u^{s}\right|^{2} d s+C_{2}\left(\frac{\left|x_{1}\right|}{2}\right)^{-3}\int_{\Gamma}\left|\partial_{\nu} u^{s}\right|^{2} d s,
        \end{align*}
where
\begin{equation*}
    C_{1}=2 \sup _{x \in D\backslash \overline{U}_{a}} \int_{\Gamma}\left|G(x, y)\right|^{2} d s(y)<\infty.
\end{equation*}
Thus, $u^s(x)\to 0$ as $x_{1}\to \infty$ with $x \in D\backslash \overline{U}_{a}$, uniformly in $x_{2}$.
Hence by Theorems \ref{thm2} and \ref{thm3} as well as Lemma \ref{thm1}, we have $R_j(A)\to 0$  as $A \to \infty$, $j=1,2$. Therefore, it follows from (\ref{eq12}) that $\partial_{\nu}u^s = 0 $ on $\Gamma$.

In conclusion, based on the above discussions and Theorem \ref{thm6}, we establish the following theorem on the uniqueness of the problem (DBVP).

\begin{theorem}
    For every $g\in BC(\Gamma)$, there exists at most one solution  $u^s \in \mathscr{R}_{d}(D)$ that satisfies the boundary value problem (DBVP) under the assumption $k_{+} > k_{-}>0$.
    \label{thm9}
\end{theorem}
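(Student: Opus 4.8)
The plan is to establish uniqueness for (DBVP) by showing that any solution $u^s$ of the homogeneous problem (i.e., with $g\equiv 0$) must vanish identically in $D$, which is precisely what the preceding development has been building toward. First I would recall that the homogeneous solution admits the Green representation \eqref{eq5}, $u^s(x)=\int_{\Gamma}\partial_\nu u^s(y)G(x,y)\,ds(y)$ for $x\in D\backslash U_0$, established in Theorem \ref{thm6}. Hence it suffices to prove $\partial_\nu u^s=0$ on $\Gamma$. To do this, I would combine the Rellich-type estimate \eqref{eq7}, namely $\int_{\Gamma(B,A)}|\partial_\nu u^s|^2\,ds\leq C[I_1(B,A)+R_1(B,A)]$, with the cutdown function $v$ defined in \eqref{eq11} and the sign conditions on $\Gamma_0$ from Lemma \ref{thm4}. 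The reflection argument transfers the integral over $\Gamma(A)$ down to an integral over the interface $\Gamma_0$, and the inequality $I''(A)\leq 2k_+J''(A)$ from Lemma \ref{thm4} together with Green's theorem in $T=\{x\in D\backslash\overline{U}_0:|x_1|<A\}$ (which gives $J(A)=I_2(A)$ and similar identities) lets me bound $K_A=\int_{\Gamma(A)}|\partial_\nu u^s|^2\,ds$ by the convolution-type quantities $W_A$, $W_\infty$ plus the boundary remainder terms $R_1(A),R_2(A)$.

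The second main step is to verify the hypotheses of Lemma \ref{thm5} (the $L^2$ lemma from \cite[Lemma A]{CZ1998}). Using \eqref{eq9} I have the uniform local bound $\sup_j\int_{\Gamma(j-1,j)}|\partial_\nu u^s|^2\,ds<\infty$, which gives the first hypothesis of Lemma \ref{thm5}. The pointwise estimates $|v(x)|,|\nabla v(x)|\leq C_a(1+L^2)^{1/2}W_A(x_1)$ and $|u^s(x)-v(x)|,|\nabla u^s(x)-\nabla v(x)|\leq C_a(1+L^2)^{1/2}(W_\infty(x_1)-W_A(x_1))$ on $\Gamma_a$ — which follow from the decay estimate \eqref{eq4} of Theorem \ref{thm28} applied to $G(x,y)$ for $y\in\Gamma$ — then yield the second hypothesis with $\epsilon=c\,\sup_{A>A_0}|R(A)|$. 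Applying Lemma \ref{thm5} gives $w(x_1):=\partial_\nu u^s((x_1,f(x_1)))\in L^2(\mathbb{R})$ together with the key bound \eqref{eq12}: $\int_\Gamma|\partial_\nu u^s|^2\,ds\leq c\,\sup_{A>A_0}|R(A)|$.

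The final step is to show the right-hand side of \eqref{eq12} is zero, i.e., $R_j(A)\to 0$ as $A\to\infty$ for $j=1,2$. This follows from the decay of $u^s$ itself: splitting the representation \eqref{eq5} into the contribution of $\Gamma\backslash\Gamma(|x_1|/2)$ and of $\Gamma(|x_1|/2)$ and using \eqref{eq4} together with $\partial_\nu u^s\in L^2(\Gamma)$, one gets $|u^s(x)|\to 0$ as $|x_1|\to\infty$ uniformly in $x_2$ on $D\backslash U_0$; combined with the derivative estimates of Theorems \ref{thm2} and \ref{thm3} and the boundedness \eqref{eq3}, the line integrals $\int_{\gamma(\pm A)}$ defining $R_1,R_2$ tend to $0$. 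Hence $\partial_\nu u^s=0$ on $\Gamma$, and then \eqref{eq5} forces $u^s\equiv 0$ in $D\backslash U_0$, and by the upward propagating radiation condition $u^s\equiv 0$ in all of $D$.

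The main obstacle, I expect, is the careful bookkeeping in transferring the Rellich estimate on $\Gamma$ down to $\Gamma_0$ and controlling the mismatch between the cutdown $v$ and the true solution $u^s$: one must simultaneously keep track of the interface identities (the transmission condition allows passing from $k_-$-quantities to $k_+$-quantities, which is where the hypothesis $k_+>k_-$ enters crucially), the sign of $\mathrm{Im}\int_{\Gamma_0}\bar v\,\partial_2 v\,ds\geq 0$ from Lemma \ref{thm4}, and the convolution estimates feeding into Lemma \ref{thm5}. The role of the two-layered Green function's decay \eqref{eq4} — matching that of the half-space Green function — is what makes all the $W_A$-type convolution bounds go through exactly as in the classical rough-surface case of \cite{CZ1998}.
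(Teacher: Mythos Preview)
Your proposal is correct and follows essentially the same argument as the paper: the Green representation \eqref{eq5}, the Rellich-type bound \eqref{eq7}, the cutdown $v$ of \eqref{eq11} combined with Lemma \ref{thm4}, the convolution estimates feeding Lemma \ref{thm5} to obtain \eqref{eq12}, and finally the decay of $u^s$ forcing $R_j(A)\to 0$. Your final step, extending $u^s\equiv 0$ from $D\setminus U_0$ to all of $D$ via the upward propagating radiation condition, is in fact a bit more explicit than the paper, which simply asserts the conclusion ``in $D$'' directly from \eqref{eq5}.
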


\begin{remark}
In the case $k_+<k_-$, it can be seen from {\rm\cite[Example 2.3]{KL2018}} that when the rough boundary $\Gamma$ is a planar surface, there exist some wave numbers $k_{+}$ and $k_{-}$ such that the problem (DBVP) with $g=0$ has a nontrivial solution.
\end{remark}

\subsection{The Uniqueness Result of the Problem (IBVP)}
\label{section3c}

\begin{theorem}
    Let $u^s\in \mathscr{R}_{i}(D)$ be the solution of the problem (IBVP). Then
    \begin{equation}
        u^s(x) = -\int_{\Gamma}\left( \frac{G(x,y)}{\partial \nu(y)}-ik_{-}\beta(x)G(x,y)\right)u^s(y)ds(y)+\int_{\Gamma}G(x,y)g(y)ds(y),\,\, x\in D,
        \label{eq14}
    \end{equation}
    where $\nu$ denotes the unit normal on $\Gamma$ pointing out of $D$.
    \label{green_ip}
\end{theorem}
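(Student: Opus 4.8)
The plan is to derive the Green representation formula \eqref{eq14} by applying Green's second identity to $u^s$ and the two-layered Green function $G(x,\cdot)$ on a suitably truncated and punctured domain, and then letting the truncation parameters pass to the limit. This mirrors exactly the strategy used in the proof of Theorem \ref{thm6} for the (DBVP) case; the only essential difference is that here we carry the impedance boundary condition $\partial u^s/\partial\nu - \mathrm{i}k_{-}\beta u^s = g$ through the boundary term on $\Gamma$, so that instead of a single-layer potential we pick up both a double-layer contribution and a single-layer contribution coming from the data $g$.

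First I would fix $x = (x_1,x_2)\in D\backslash\overline{U}_0$ and introduce the domain
\[
T_A^\epsilon := \{y : |y_1| < A\}\cap \big(D\backslash(\overline{U}_0\cup\overline{B_\epsilon(x)})\big),
\]
where $\epsilon$ is small enough that $\overline{B_\epsilon(x)}\subset D\backslash\overline{U}_0$. Since $u^s\in\mathscr{R}^I(D)$ gives $u^s\in C^1(\overline{D}\backslash U_{|f_{+}|/2})$ (using condition (v) of (IBVP) together with the elliptic regularity argument as in Theorem \ref{thm2} and Theorem \ref{thm3} to make the boundary integral over $\Gamma$ meaningful), and since $G(x,\cdot)$ is smooth away from the diagonal and across $\Gamma_0$ satisfies the transmission conditions, Green's second identity yields
\[
0 = \left[\int_{S_\epsilon(x)} + \int_{\gamma(-A)} + \int_{\gamma(A)} + \int_{\Gamma_0(A)} + \int_{\Gamma(A)}\right]\left[u^s(y)\frac{\partial G(x,y)}{\partial\nu(y)} - \frac{\partial u^s}{\partial\nu}(y)G(x,y)\right]ds(y).
\]
The term over $S_\epsilon(x)$ tends to $u^s(x)$ as $\epsilon\to 0$ by the mean value theorem and the singularity structure of $G$ from \eqref{eq63}. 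The two side contributions over $\gamma(\pm A)$ vanish as $A\to\infty$ by the decay estimate \eqref{eq4} of Theorem \ref{thm28} combined with the derivative bounds of Theorems \ref{thm2} and \ref{thm3}. The contribution over $\Gamma_0(A)$ vanishes in the limit exactly as in Theorem \ref{thm6}: one uses the transmission conditions for both $u^s$ and $G(x,\cdot)$ on $\Gamma_0$ to move the integral up to $\Gamma_d$ for $d>0$, and then invokes \cite[Lemma 2.1]{CZ1998} together with the upward propagating radiation condition (vi) of (IBVP) and the $L^1$-integrability of $G(x,\cdot)|_{\Gamma_d}$ and $\partial_{y_2}G(x,\cdot)|_{\Gamma_d}$ (a consequence of \eqref{eq4}) to conclude it is zero.

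The remaining term is the integral over $\Gamma(A)$, and here is where the impedance condition enters. On $\Gamma$ we substitute $\partial u^s/\partial\nu = g + \mathrm{i}k_{-}\beta u^s$, so that
\[
-\int_{\Gamma(A)}\left[u^s(y)\frac{\partial G(x,y)}{\partial\nu(y)} - \frac{\partial u^s}{\partial\nu}(y)G(x,y)\right]ds(y)
= -\int_{\Gamma(A)}\left[\frac{\partial G(x,y)}{\partial\nu(y)} - \mathrm{i}k_{-}\beta(y)G(x,y)\right]u^s(y)\,ds(y) + \int_{\Gamma(A)}G(x,y)g(y)\,ds(y).
\]
Letting $A\to\infty$ and using $\beta\in BC(\Gamma)$, $g\in BC(\Gamma)$, $u^s$ bounded on $\Gamma$ (from $u^s\in C(\overline D)$ and \eqref{eq3} near the boundary), and again the decay \eqref{eq4}, both integrals converge absolutely to the corresponding integrals over all of $\Gamma$, and we obtain \eqref{eq14} for $x\in D\backslash\overline{U}_0$. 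Finally, by the dominated convergence theorem (justified by \eqref{eq4} and the integrability just noted) the identity extends to $x\in\Gamma_0$, hence to all $x\in D\backslash U_0$.

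The main obstacle I anticipate is the justification of the convergence and integrability near the rough boundary $\Gamma$: unlike the (DBVP) case, the representation here involves the double-layer kernel $\partial G(x,y)/\partial\nu(y)$ against $u^s$ rather than the single-layer kernel against $\partial_\nu u^s$, and one must be careful that $u^s$ and its normal derivative are controlled up to $\Gamma$. This is handled by condition (v) of (IBVP) (the $[x_2-f(x_1)]^{\theta-1}$ gradient bound) together with Theorem \ref{thm3}, which together guarantee that the boundary traces are integrable and that the limiting boundary integrals are well defined; the decay estimate \eqref{eq4} then does the rest for the behavior at infinity.
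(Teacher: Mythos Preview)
Your proposal is correct and follows essentially the same route as the paper's proof: apply Green's second identity on the truncated, punctured region $T_A^\epsilon$, handle the $S_\epsilon(x)$, $\gamma(\pm A)$ and $\Gamma_0(A)$ contributions exactly as in Theorem~\ref{thm6}, substitute the impedance relation $\partial_\nu u^s = g + \mathrm{i}k_{-}\beta u^s$ into the boundary term on $\Gamma(A)$, and pass to the limit using \eqref{eq4} and dominated convergence. One small remark: Theorem~\ref{thm3} is stated for (DBVP), so for the gradient control near $\Gamma$ you should (as you in fact do later) rely directly on condition (v) of (IBVP) rather than on Theorem~\ref{thm3}.
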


\begin{proof}
First, we consider the case when $x=(x_1,x_2)\in D\backslash \overline{U}_{0}$. Let $A>0$ and let the domain $T_{A}^{\epsilon}$ be given as in \eqref{eq206}, where $B_{\epsilon}(x)$ denotes a ball centered at $x$ with radius $\epsilon$ small enough such that $\overline{B_{\epsilon}(x)}\subset  \left\{x\,:\,|x_1|<A, x\in D\backslash \overline{U}_{0} \right\}$.
    By applying Green's theorem in the domain $T_{A}^{\epsilon}$ and letting $\epsilon\to 0$, it follows that
    \begin{align}
        u^s(x) & = -\int_{\Gamma(A)}\left( \frac{\partial G(x,y)}{\partial \nu(y)}-ik_{-}\beta(x)G(x,y)\right)u^s(y)ds(y)+\int_{\Gamma(A)}G(x,y)g(y)ds(y)\notag\\
        &+\int_{\Gamma_{0}(A)}\left(G(x,y)\frac{\partial u^s}{\partial y_2}\big|_{-}(y)-u^s|_{-}(y)\frac{\partial G(x,y)}{\partial y_2}\right)ds(y)\notag \\
        &+ \left(\int_{\gamma_{0}(-A)}-\int_{\gamma_{0}(A)}\right)\left(u^s(y)\frac{\partial G(x,y)}{\partial y_1}-G(x,y)\frac{\partial u^s(y)}{\partial y_1}\right)ds(y),\label{eq13}
    \end{align}
    where '-' in the third integral
of the above formula is the limit given as in \eqref{eq172}.
    With the help of Theorems \ref{thm38} and \ref{thm2}, we can apply the similar argument as in the derivations of \eqref{eq203}--\eqref{eq205} to obtain that
    \begin{equation*}
        \lim_{A\to\infty} \left(\int_{\gamma_{0}(-A)}-\int_{\gamma_{0}(A)}\right)\left(u^s(y)\frac{\partial G(x,y)}{\partial y_1}-G(x,y)\frac{\partial u^s(y)}{\partial y_1}\right)ds(y) = 0
    \end{equation*}
    and that for $a>0$,
    \begin{align*}
        &\lim_{A\to \infty} \int_{\Gamma_{0}(A)}\left(G(x,\cdot)\frac{\partial u^s}{\partial y_2}\big|_{-}-u^s|_{-}\frac{\partial G(x,\cdot)}{\partial y_2}\right)ds \\
       &= \int_{\Gamma_{a}}\left(G(x,y)\frac{\partial u^s(y)}{\partial y_2}-u^s(y)\frac{\partial G(x,y)}{\partial y_2}\right)ds(y)=0.
    \end{align*}
    Thus we can obtain the formula (\ref{eq14}) by letting $A\to +\infty$ in the formula (\ref{eq13}).

    Second, by the dominated convergence theorem and the above discussions, the formula (\ref{eq14}) holds for $x\in \Gamma_{0}$. Moreover, by using similar arguments as above, we have that \eqref{eq14} also holds for $x \in U_{0}$.
\end{proof}

In the rest of this subsection, with a slight abuse of notations, we will redefine $J(A)$, $J'(A)$, $J''(A)$, $R_1(A)$, $K(A)$, $\omega(\cdot)$, $W_A(\cdot)$ and $W_\infty(\cdot)$.
Applying Green's theorem in the domains $\{x=(x_1,x_2)\,:\,x\in D\backslash \overline{U}_{0},|x_1|< A\}$ and $\{x=(x_1,x_2)\,:\,|x_1|< A, 0< x_2< a \}$ with $A,a>0$ and using the conditions (ii) and (iii) in the problem (IBVP), we can immediately obtain the following lemma.

\begin{lemma}
Set $A,a>0$. Let $u^s\in \mathscr{R}_{i}(D)$ satisfy the problem (IBVP) with $g=0$. Then
\begin{equation*}
    k_{-}\int_{\Gamma(A)}\mathrm{Re}(\beta)|u^s|^2ds+J(A)=R_{1}(A),
\end{equation*}
where
\begin{equation}
    J(A):=\mathrm{Im}\int_{\Gamma_{a}(A)}\overline{u^s}\partial_2 u^sds,\quad R_{1}(A):=\mathrm{Im}\left(\int_{\gamma_{a}(-A)}-\int_{\gamma_{a}(A)}\right)\overline{u^s}\partial_1 u^sds.\label{eq200}
\end{equation}
\end{lemma}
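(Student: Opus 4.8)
The plan is to apply the divergence theorem to the vector field $\overline{u^s}\,\nabla u^s$ on the bounded strip
\begin{equation*}
T(-A,A):=\{(x_1,x_2):-A<x_1<A,\ f(x_1)<x_2<0\}\subset D\setminus\overline{U}_0
\end{equation*}
and then take imaginary parts. On $T(-A,A)$ one has $\Delta u^s+k_-^2u^s=0$, so $\operatorname{div}(\overline{u^s}\,\nabla u^s)=|\nabla u^s|^2+\overline{u^s}\,\Delta u^s=|\nabla u^s|^2-k_-^2|u^s|^2$, which is real. Hence the divergence theorem gives
\begin{equation*}
\int_{T(-A,A)}\bigl(|\nabla u^s|^2-k_-^2|u^s|^2\bigr)\,dx=\int_{\partial T(-A,A)}\overline{u^s}\,\partial_\nu u^s\,ds,
\end{equation*}
and, since the left-hand side is real, taking imaginary parts leaves $\mathrm{Im}\int_{\partial T(-A,A)}\overline{u^s}\,\partial_\nu u^s\,ds=0$.

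It then remains to evaluate the four boundary pieces $\partial T(-A,A)=\Gamma(-A,A)\cup\Gamma_0(-A,A)\cup\gamma(A)\cup\gamma(-A)$. On the top piece $\Gamma_0(-A,A)$ the exterior unit normal of $T(-A,A)$ is $(0,1)$, so $\partial_\nu u^s=\partial_2 u^s$ and this contributes $\mathrm{Im}\int_{\Gamma_0(A)}\overline{u^s}\,\partial_2 u^s\,ds=J(A)$. On the two vertical sides $\gamma(\pm A)$ the exterior normals are $(\pm1,0)$, so the combined contribution is $\mathrm{Im}\bigl[\int_{\gamma(A)}-\int_{\gamma(-A)}\bigr]\overline{u^s}\,\partial_1 u^s\,ds=-R_1(A)$. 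On the bottom piece $\Gamma(-A,A)=\Gamma(A)$ the exterior normal of $T(-A,A)$ is the exterior unit normal $\nu$ of $D$, and condition (iii) of (IBVP) with $g=0$ gives $\partial_\nu u^s=\mathrm{i}k_-\beta\,u^s$ on $\Gamma$; hence this contributes $\mathrm{Im}\int_{\Gamma(A)}\mathrm{i}k_-\beta|u^s|^2\,ds=k_-\int_{\Gamma(A)}\mathrm{Re}(\beta)|u^s|^2\,ds$, using $k_->0$ and $\mathrm{Im}(\mathrm{i}z)=\mathrm{Re}(z)$. Summing the four contributions yields $k_-\int_{\Gamma(A)}\mathrm{Re}(\beta)|u^s|^2\,ds+J(A)-R_1(A)=0$, which is the asserted identity.

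The one point that needs care is the justification of the divergence theorem up to the rough surface, since condition (v) of (IBVP) only bounds $|\nabla u^s(x)|$ by $C_\theta[x_2-f(x_1)]^{\theta-1}$ with $\theta\in(0,1)$, so in general $u^s$ need not lie in $H^1$ near $\Gamma$. I would therefore first apply the identity on the cut-off domain $T^\epsilon:=\{(x_1,x_2):-A<x_1<A,\ f(x_1)+\epsilon<x_2<0\}$, on which $u^s\in C^1(\overline{T^\epsilon})\cap C^2(T^\epsilon)$ (by interior elliptic regularity together with $u^s|_{D\setminus U_0}\in C^1(D\setminus U_0)$ from the definition of $\mathscr{R}^{I}(D)$), and then let $\epsilon\to0^+$. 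Since $u^s\in C(\overline D)$ is bounded on $\overline{T(-A,A)}$ and $[x_2-f(x_1)]^{\theta-1}$ is integrable in $x_2$ (as $\theta-1>-1$), the integral over the shifted bottom converges to $\int_{\Gamma(A)}\overline{u^s}\,\partial_\nu u^s\,ds$; here one uses the uniform-on-compacta convergence of the normal derivative built into the definition of $\mathscr{R}^{I}(D)$, and it is cleanest to run the cut-off through the inner parallel surfaces $\{x-\epsilon\nu(x):x\in\Gamma(-A,A)\}$ so that the limiting normal derivative is literally $\partial_\nu u^s$, absorbing the resulting corner strips into the vertical sides. The integrals over the vertical sides $\gamma^\epsilon(\pm A)$ converge to those over $\gamma(\pm A)$ by dominated convergence (the integrand being integrable near the lower endpoints by condition (v)), while the top piece $\Gamma_0(-A,A)$ is the same for every $\epsilon$. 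The decay-in-$x_1$ a priori estimates are not needed here, since $A$ is fixed and finite; the whole argument is the same bookkeeping as in the proof of Theorem \ref{green_ip}.
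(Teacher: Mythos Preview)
Your proof is correct and follows essentially the same approach as the paper, which simply states ``Apply Green theorem to $u$ and $\nabla u$, then we can immediately deduce the following lemma.'' Your version is in fact more careful than the paper's, since you spell out the cut-off argument near $\Gamma$ needed to justify the divergence theorem given only the condition (\romannumeral5) bound on $\nabla u^s$.
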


Now we give the uniqueness of the problem (IBVP).

\begin{theorem}
    Suppose that $k_{\pm}>0$ and $d>0$. If $\beta\in BC(\Gamma)$ with $\mathrm{Re}(\beta(x))\geq d$ on $x\in \Gamma$, then the problem (IBVP) has at most one solution for every $g\in BC(\Gamma)$.
    \label{thm26}
\end{theorem}

\begin{proof}
Let $u^s \in \mathscr{R}_{i}(D)$ satisfy the problem (IBVP) with $g=0$. We need to show that $u^s \equiv 0$ in $D$.
Let $A>0$ and define $v_{imp}$ by
  \begin{equation}
    v_{imp}(x): = -\int_{\Gamma(A)}\left( \frac{\partial G(x,y)}{\partial \nu(y)}-ik_{-}\beta(x)G(x,y)\right)u^s(y)ds(y),\quad x \in D.\label{eq194}
\end{equation}
By utilizing the estimates of $G$ in \eqref{eq192} and the fact that $u^s\in \mathscr{R}_{i}(D)$,  it can be derived that $v_{imp}|_{\Gamma_{b}}\in BC(\Gamma_{b})\cap L^2(\Gamma_{b})$ for all $b>0$. On the other hand, it follows from
\eqref{eq166}, \eqref{eq192} and the statement (iii) of Lemma \ref{thm36} that $v_{imp}$ is a radiating solution of $\Delta v_{imp}+k_{+}^2v_{imp}=0$ in $U_{b}$ for all $b>0$. Thus, in view of the equivalence of the statements (ii) and (iv) in Theorem 2.9 in \cite{CZ1998a}, $v_{imp}$ satisfies \eqref{eq1} with $h=b$ and $\phi=v_{imp}|_{\Gamma_{b}}$ for every $b>0$.

Let $a>0$ and set
\begin{equation*}
    J'(A):= \mathrm{Im}\int_{\Gamma_{a}(A)}\overline{v_{imp}}\partial_2 v_{imp}ds,\quad  J''(A) := \int_{\Gamma_{a}}\overline{v_{imp}}\partial_2 v_{imp}ds.
\end{equation*}
Then by \eqref{eq199} in Lemma \ref{thm39}, $J''(A)\geq 0$, so that, by \eqref{eq200} and the fact that $\mathrm{Re}(\beta(x))\geq d>0$ for $x\in \Gamma$, we have
\begin{equation*}
K(A):=\int_{\Gamma(A)}|u^s|^2ds\leq (k_{-}d)^{-1}\left(-J(A)+R_1(A)\right)\leq (k_{-}d)^{-1}\left(J''(A)-J(A)+R_1(A)\right).
\end{equation*}
Let $w(x_1)=u^s(x_1,f(x_1))$. Then
\begin{equation*}
    \int_{-A}^{A}|w(x_1)|^2dx_1\leq K(A) \leq \sqrt{1+L^2}\int_{-A}^{A}|w(x_1)|^2dx_1.
\end{equation*}
Set
\begin{align*}
    &W_A(x_1):=\int_{-A}^{A}(1+|x_1-y_1|)^{-\frac{3}{2}}|w(y_1)|dy_1,\quad x_1\in \mathbb{R},\\
    &W_\infty(x_1):=\int_{-\infty}^{+\infty}(1+|x_1-y_1|)^{-\frac{3}{2}}|w(y_1)|dy_1,\quad x_1\in \mathbb{R}.
\end{align*}
It follows from the formulas \eqref{eq14} and \eqref{eq194} and the estimates \eqref{eq192} and \eqref{eq189} that
\begin{align*}
    &|v_{imp}(x)|,~|\nabla v_{imp}(x)|\leq C W_{A}(x_1),\quad  x\in \Gamma_{a},\\
    &|u^s(x)-v_{imp}(x)|,~|\nabla u^s(x)-\nabla v_{imp}(x)|\leq C (W_{\infty}(x_1)-W_{A}(x_1)),\quad x\in \Gamma_{a}.
\end{align*}
This leads to
\begin{align*}
    &|J'(A)-J''(A)|  \leq C\int_{\mathbb{R}\backslash[-A,A]}(W_{A}(x_1))^2dx_1,\\
   & |J(A)-J'(A)|  \leq 2C \int_{-A}^{A}(W_{\infty}(x_1)-W_A(x_1))W_{\infty}(x_1)dx_1.
\end{align*}
Hence, the above analysis gives that
\begin{equation*}
    \int_{-A}^{A}|w(x_1)|^2dx_1\leq C\left(\int_{\mathbb{R}\backslash[-A,A]}(W_{A}(x_1))^2dx_1+\int_{-A}^{A}(W_{\infty}(x_1)-W_{A}(x_1))W_{\infty}(x_1)dx_1 + |R_{1}(A)| \right).
\end{equation*}
By employing  Lemma \ref{thm5}, we obtain that for all $A_0>0$,
\begin{equation}
\left(1+L^2 \right)^{-1/2}\int_{\Gamma}|u^s|^2ds\leq \int_{-\infty}^{\infty}(W_A(x_1))^2dx_1\leq C\sup_{A>A_{0}}|R_{1}(A)|.
    \label{eq62}
\end{equation}

From \eqref{eq3} and the fact that $u^{s}\in C(\overline{D})$, we obtain that $u^{s}\in BC(\Gamma)$. This, together with Theorems \ref{green_ip} and \ref{thm10}, implies that $u^s\in C^{0,\lambda}(\Gamma)$ for every $\lambda \in (0,1)$. Thus $u^s\in BUC(\Gamma)\cap L^2(\Gamma)$, which yields that  $u^s(x)\to 0$ as $|x|\to \infty$ for $x\in \Gamma$.
Choose a cutoff function $\psi_{A}\in BC(\Gamma)$ such that $\|\psi_{A}\|_{\infty,\Gamma}=1$ with  $\psi_{A}(x)=1$ for $|x_1|\leq A/3$ and $\psi_{A}(x)=0$ for $|x_1|\geq 2A/3$.
Let $u^s_1(x)$ and $u^s_2(x)$ be given by \eqref{eq14} with $g=0$, where the density $u^{s}$ is replaced by $u^{s}(1-\psi_{A})$ and $u^{s}\psi_{A}$, respectively. Thus
$u^s(x)=u^s_1(x)+u^s_2(x)$ for $x\in D$.
From Theorems \ref{thm7} (iii) and \ref{thm27} (iii), we have that there exists some constant $C>0$ such that for all $x\in \gamma_{a}(-A)\cup \gamma_{a}(A)$, $|u^s_1(x)|\leq C\|u^s(1-\psi_A)\|_{\infty,\Gamma}\to 0$ as $A\to \infty$.
Moreover, it follows from the definition of $u_2^{s}(x)$ and Theorem \ref{thm38} that there exists some constant $C>0$ such that
\begin{equation*}
\sup_{x\in\gamma_{a}(-A) \cup \gamma_{a}(A)} |u^s_2(x)|\leq C\|u^s\|_{\infty,\Gamma} \int_{\frac{A}{3}}^{\frac{5A}{3}}t^{-\frac{3}{2}}dt\to 0 \quad \text{ as } A \to \infty.
\end{equation*}
Hence, by
\eqref{eq4} we obtain that $R_{1}(A)\to 0$ as $A\to \infty$. Consequently, from  \eqref{eq62} we have $u^s=0$ on $\Gamma$. This, together with \eqref{eq14}, implies that $u^s=0$ in $D$.
Therefore, the proof is complete.
\end{proof}

\subsection{The Existence Result of the Problem (DBVP)}
\label{section10}
For $\psi \in BC(\Gamma)$, the integrals
\begin{align}
    W(x)&:=\int_{\Gamma}\frac{\partial G(x,y)}{\partial \nu(y)}\psi(y)ds(y),\quad x \in \mathbb{R}^2\backslash \Gamma, \label{eq67}\\
    V(x)&:=\int_{\Gamma}G(x,y)\psi(y)ds(y), \quad x \in \mathbb{R}^2\backslash \Gamma,\label{eq68}
\end{align}
are called the double- and single-layer potentials, respectively.  Here, $\nu$ denotes the unit normal on $\Gamma$ pointing out of $D$.
The properties of the double-layer potential (\ref{eq67}) and the single-layer potential (\ref{eq68}) are summarized in Appendix \ref{section7}.

We introduce a function in the form of a combined double- and single-layer potential, i.e.,
\begin{equation}
    u^s(x):=\int_{\Gamma}\left(\frac{\partial G(x,y)}{\partial \nu(y)}+i\eta G(x,y)\right)\psi(y)ds(y),\quad x\in \mathbb{R}^2\backslash \Gamma,
\label{eq16}
\end{equation}
where $\psi \in BC(\Gamma)$, $\eta \neq 0$ is a constant.
From the statements (\romannumeral1),  (\romannumeral3) and (\romannumeral5) in Theorem \ref{thm7} and the statements (\romannumeral1),  (\romannumeral3) and (\romannumeral4) in Theorem \ref{thm27}, the potential $u^s$ satiefies the conditions (\romannumeral1), (\romannumeral2), (\romannumeral4) and (\romannumeral5) of the problem (DBVP) with $\alpha=-1/2$. Furthermore, by the statement (\romannumeral2) in Theorem \ref{thm7} and the statement (\romannumeral2) in Theorem \ref{thm27}, $u^s$ satisfies the condition (\romannumeral3) of the problem (DBVP) provided $\psi \in BC(\Gamma)$ is the solution of the following boundary integral equation
\begin{equation}
    \psi(x)=2\int_{\Gamma}\left(\frac{\partial G(x,y)}{\partial \nu(y)}+i\eta G(x,y)\right)\psi(y)ds(y)-2g(x) \text{ on } \Gamma.
    \label{eq17}
\end{equation}
Thus we get the following result.

\begin{theorem}
    The combined double- and single-layer potential (\ref{eq16}) satisfies the problem {\rm (DBVP)} with $\alpha=-1/2$, provided $\psi\in BC(\Gamma)$ satisfies the boundary integral equation (\ref{eq17}).
    \label{thm8}
\end{theorem}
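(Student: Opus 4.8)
The plan is to verify directly that the combined potential $u^s$ in (\ref{eq16}) meets each of the requirements (\romannumeral1)--(\romannumeral5) in the definition of the problem (DBVP), writing $u^s=W+\mathrm{i}\eta V$ as the sum of the double-layer potential $W$ and the single-layer potential $V$, both with density $\psi$, and drawing on the mapping and jump properties of these potentials recorded in Theorems \ref{thm7} and \ref{thm27} of Appendix \ref{section7}.

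First I would note that, since $f_+<0$, the rough surface $\Gamma$ lies at a positive distance from the interface $\Gamma_0$; hence for each fixed $y\in\Gamma$ the kernel $G(\cdot,y)$ and its normal derivative $\partial G(\cdot,y)/\partial\nu(y)$ are smooth in $x$ on a full neighbourhood of $\Gamma_0$, solve $\Delta_x G+k_+^2G=0$ in $U_0$ and $\Delta_x G+k_-^2G=0$ in $D\backslash\overline{U}_0$, and satisfy the transmission conditions on $\Gamma_0$ by their very construction (\ref{eq22}). Differentiating under the integral sign — legitimate by the locally uniform estimates underlying Theorems \ref{thm7} and \ref{thm27} — then shows that $u^s\in C^2(D\backslash\Gamma_0)$ satisfies the Helmholtz equations (\romannumeral1) and the transmission conditions (\romannumeral2). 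Parts (\romannumeral1) and (\romannumeral3) of Theorems \ref{thm7} and \ref{thm27} give $u^s\in C(\overline{D})$ with $u^s|_{\overline{U}_0}\in C^1(\overline{U}_0)$ and $u^s|_{D\backslash U_0}\in C^1(D\backslash U_0)$, i.e.\ $u^s\in\mathscr{R}^{D}(D)$; part (\romannumeral5) of Theorem \ref{thm7} and part (\romannumeral4) of Theorem \ref{thm27} supply the upward propagating radiation condition (\romannumeral5). The growth bound (\romannumeral4), namely (\ref{eq3}) with exponent $a=-1/2$, is also part of those theorems, and it is here that the asymptotic estimate (\ref{eq4}) of Theorem \ref{thm28} is decisive: for $y\in\Gamma$ one has $|G(x,y)|\lesssim (1+x_2)|x-y|^{-3/2}$ for large $x_2$, and integrating this over $y_1\in\mathbb{R}$ produces exactly a bound of order $x_2^{1/2}$, so that $(x_2+|f_-|+1)^{-1/2}u^s$ is bounded on $D$.

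The remaining step, and the one that brings in the hypothesis on $\psi$, is to check the Dirichlet condition (\romannumeral3), $u^s=g$ on $\Gamma$. By the jump relation in part (\romannumeral2) of Theorem \ref{thm7} for the double-layer potential — with $\nu$ oriented out of $D$, so that the boundary trace taken from within $D$ carries the factor $-\tfrac12$ — together with the continuity across $\Gamma$ of the single-layer potential from part (\romannumeral2) of Theorem \ref{thm27}, the limit of $u^s$ on $\Gamma$ from inside $D$ equals
\begin{equation*}
-\tfrac12\psi(x)+\int_{\Gamma}\Bigl[\frac{\partial G}{\partial\nu(y)}(x,y)+\mathrm{i}\eta\,G(x,y)\Bigr]\psi(y)\,ds(y),\qquad x\in\Gamma.
\end{equation*}
Thus $u^s=g$ on $\Gamma$ is equivalent to $-\tfrac12\psi+\int_{\Gamma}[\partial G/\partial\nu+\mathrm{i}\eta G]\psi\,ds=g$, which after multiplication by $-2$ is precisely the boundary integral equation (\ref{eq17}); since $\psi\in BC(\Gamma)$ is assumed to solve (\ref{eq17}), condition (\romannumeral3) holds and the verification is complete.

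A remark on the one genuinely delicate point: all the classical potential-theoretic facts invoked above are justified by decomposing the kernel as $G=\Phi_{k_+}+(\text{smooth remainder})$ in $U_0$ (and likewise $G=\Phi_{k_-}+R$ on $\mathbb{R}^2\backslash\overline{U}_0$, cf.\ (\ref{eq44}) and (\ref{eq63})), so that the non-smooth behaviour of $W$ and $V$ near $\Gamma$ is governed entirely by the free-space layer potentials, to which the standard jump relations apply, while the remainder contributes only bounded, smooth terms. Consequently nothing beyond classical potential theory is needed except the decay estimate (\ref{eq4}), which feeds both the growth bound (\romannumeral4) and the radiation condition (\romannumeral5); everything else is bookkeeping over the splitting $u^s=W+\mathrm{i}\eta V$. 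I expect the identification of this main obstacle — controlling the far-field behaviour of the layer potentials over the unbounded surface $\Gamma$ through Theorem \ref{thm28} — to be where the real work sits, with the rest routine.
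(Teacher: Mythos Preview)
Your proposal is correct and mirrors the paper's own argument: the paper simply cites parts (\romannumeral1), (\romannumeral3), (\romannumeral5) of Theorem \ref{thm7} and (\romannumeral1), (\romannumeral3), (\romannumeral4) of Theorem \ref{thm27} to obtain conditions (\romannumeral1), (\romannumeral2), (\romannumeral4), (\romannumeral5) of (DBVP), and then uses the jump relations in part (\romannumeral2) of each theorem to reduce the Dirichlet condition (\romannumeral3) to the integral equation (\ref{eq17}) --- exactly as you do. The only minor bookkeeping slip is that the membership $u^s\in C(\overline{D})$ (continuity up to $\Gamma$) comes from part (\romannumeral2) rather than parts (\romannumeral1) and (\romannumeral3) of those theorems.
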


Define $\tilde{\psi},\tilde{g}\in BC(\mathbb{R})$ by
\begin{equation}
     \tilde{\psi}(s):=\psi(s,f(s)),\quad \tilde{g}(s) := g(s, f(s)), \quad s\in \mathbb{R}.\label{eq168}
\end{equation}
By parameterizing the equation (\ref{eq17}), we obtain the following integral equation problem: find $\tilde{\psi}\in BC(\mathbb{R})$ such that
\begin{equation}
    \tilde{\psi}(s)-2\int_{\mathbb{R}}\left(\frac{\partial G(x,y)}{\partial \nu(y)}+i\eta G(x,y)\right)\sqrt{1+|f'(t)|^2}\tilde{\psi}(t)dt=-2\tilde{g}(s),\quad s\in \mathbb{R},
    \label{eq35}
\end{equation}
where $x = (s,f(s)),y=(t,f(t))$. Define the kernel $\kappa_{f}$ by
\begin{equation}
    \kappa_{f}(s,t)=2\left(\frac{\partial G(x,y)}{\partial \nu(y)}+i\eta G(x,y)\right)\sqrt{1+|f'(t)|^2},\quad s,t\in \mathbb{R},\quad s\neq t,\label{eq162}
\end{equation}
with $x = (s,f(s)),y=(t,f(t))$.
Using this kernel, define the integral operator $K_{f}$ by
\begin{equation*}
    (K_{f}\phi)(s):=\int_{\mathbb{R}}\kappa_{f}(s,t)\phi(t)dt,\quad s\in \mathbb{R},
\end{equation*}
for $\phi\in BC(\mathbb{R})$.
Then the equation (\ref{eq17}) can be written as
\begin{equation*}
    (I-K_{f})\tilde{\psi}=-2\tilde{g},
\end{equation*}
where $I$ denotes the identity operator on $BC(\mathbb{R})$. Here, we use the subscript to indicate the dependence of the kernel $\kappa_{f}$ and the operator $K_{f}$ on the function $f$.

Since $K_{f}\tilde{\psi}$ with $\tilde{\psi}\in BC(\mathbb{R})$ is an integral over the unbounded interval $\mathbb{R}$, $K_{f}$ is not a compact operator on $BC(\mathbb{R})$. Thus it is impossible to use the Riesz-Fredholm theorem to establish the solvability of the integral equation (\ref{eq35}). To overcome this difficulty, we follow the approach in \cite{ZC2003}. The following theorem presents the uniqueness of the integral equation (\ref{eq17}).

\begin{theorem}\label{integral_equation_uniqueness}
    If $\eta > 0$ and $k_{+} > k_{-} > 0$, then the integral equation (\ref{eq17}) has at most one solution in $BC(\mathbb{R})$.
\end{theorem}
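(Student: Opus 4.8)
The plan is to reduce the statement to the uniqueness of the boundary value problem together with the jump relations of the layer potentials. Suppose $\psi\in BC(\Gamma)$ solves the homogeneous version of \eqref{eq17} (i.e. with $g\equiv 0$), and let $u^s$ be the combined double- and single-layer potential \eqref{eq16} built with this $\psi$. By Theorem \ref{thm8}, $u^s$ belongs to $\mathscr{R}^{D}(D)$ and satisfies the problem (DBVP) with $g\equiv 0$; hence, by the uniqueness Theorem \ref{thm9} (which is where the assumption $k_{+}>k_{-}$ is used), $u^s\equiv 0$ in $D$. In particular, the traces of $u^s$ and of $\partial u^s/\partial\nu$ on $\Gamma$ taken from the $D$-side both vanish.

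Next I would invoke the jump relations for the single- and double-layer potentials collected in Appendix \ref{section7}: across $\Gamma$ the single-layer potential is continuous while its normal derivative jumps, and the double-layer potential jumps while its normal derivative is continuous. Combining these with $u^s\equiv 0$ in $D$, the trace of $u^s$ from the complementary domain $D_{-}:=\mathbb{R}^2\setminus\overline{D}$ (which lies entirely in $\{x_2<0\}$, and where $u^s$ satisfies the Helmholtz equation with wave number $k_{-}$) is a nonzero scalar multiple of $\psi$, and $u^s|_{D_{-}}$ satisfies on $\Gamma$ a homogeneous impedance boundary condition whose impedance coefficient is a nonzero real multiple of $\eta$; the hypothesis $\eta>0$ is precisely what makes this coefficient have the sign for which the impedance problem in $D_{-}$ is dissipative. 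Moreover, by \eqref{eq44} the kernel $G$ agrees with the free-space Helmholtz Green function $\Phi_{k_{-}}$ up to a smooth radiating remainder when both arguments lie in $\{x_2<0\}$, so $u^s|_{D_{-}}$ is a genuine combined-layer potential over $\Gamma$ (plus a smooth radiating term); it satisfies the growth bound \eqref{eq3} for some exponent (by the estimate of Theorem \ref{thm28}, since $y\in\Gamma$ stays in a bounded horizontal strip) and the corresponding radiation condition directed into $D_{-}$.

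It then suffices to prove $u^s|_{D_{-}}\equiv 0$, for then its trace on $\Gamma$ vanishes and hence $\psi\equiv 0$ by the jump relation above. This last step is the technical core, and I expect it to be the main obstacle: it is exactly the uniqueness of an impedance rough-surface problem, posed now in the domain $D_{-}$, which has the same structure as (IBVP) up to a vertical reflection and the fact that only one medium is present (so $\Gamma_0$ plays no role here). I would establish it by repeating, in $D_{-}$, the Rellich-identity argument of Subsection \ref{section3c}: apply the Rellich identity on a truncated region $\{|x_1|<A,\ -H<x_2<f(x_1)\}$, take imaginary parts, use the impedance condition on $\Gamma$ to rewrite the boundary term there (with $\eta>0$ giving it the correct sign), control the contribution of the horizontal piece $\{x_2=-H\}$ via the bound on $u^s|_{D_{-}}$, and dispose of the two lateral pieces $\{x_1=\pm A\}$ as $A\to\infty$ using the $L^2$-line-trace estimate, the analogue of Lemma \ref{thm4} for half-spaces lying below $\Gamma$, and the auxiliary estimate of Lemma \ref{thm5}. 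The only genuine difficulty is that $\Gamma$ is unbounded, which forces one to carry these far-field terms carefully rather than letting them vanish outright; this is handled exactly as in the uniqueness proofs of Theorems \ref{thm9} and \ref{thm26}, and $\eta>0$ enters only to fix the sign of the boundary term on $\Gamma$.
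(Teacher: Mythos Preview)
Your proposal is correct and follows the paper's strategy through the jump-relation step: define $u^s$ by \eqref{eq16}, use Theorem~\ref{thm8} and the uniqueness Theorem~\ref{thm9} to kill $u^s$ in $D$, then read off from the jump relations that $u^s$ restricted to the complementary domain satisfies a homogeneous impedance condition on $\Gamma$ with coefficient proportional to $\eta$.

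The one real difference is in how the final impedance-uniqueness step is handled. You propose to rerun the Rellich-identity machinery of Subsection~\ref{section3c} in $D_{-}$. The paper instead reflects vertically, setting $\tilde u^s(x_1,x_2):=u^s(x_1,-x_2)$, so that the problem becomes exactly the single-medium rough-surface impedance problem of \cite{ZC2003} (no interface $\Gamma_0$ is present below $\Gamma$), and then simply cites \cite[Theorem~4.7]{ZC2003}. Your route would work but reproves a known result; the paper's citation is shorter and cleaner. One technical detail you gloss over and the paper makes explicit: to place $\tilde u^s$ in the right function class for the impedance problem (specifically the gradient bound near the boundary, condition~(v) of (IBVP) or its analogue in \cite{ZC2003}), one first bootstraps $\psi\in C^{0,\lambda}(\Gamma)$ from the integral equation via Theorem~\ref{thm10}, and then invokes Theorem~\ref{thm11} for the double-layer part. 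You would need the same ingredient if you carried out the Rellich argument directly.
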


\begin{proof}
Suppose that $\tilde{\psi}\in BC(\mathbb{R})$ satisfies
\begin{equation}
(I-K_{f})\tilde{\psi}=0.
\label{eq19}
\end{equation}
It suffices to prove that $\tilde{\psi}=0$.
Define $\psi\in BC(\Gamma)$ by $\psi(t,f(t)) :=\tilde{\psi}(t)$ with $t\in \mathbb{R}$. Let $v^s(x)$ with $x\in\mathbb{R}^2\backslash \Gamma$ be the combined double- and single-layer potential with the density function $\psi$, that is,
    \begin{equation*}
        v^s(x):=\int_{\Gamma}\left(\frac{\partial G(x,y)}{\partial \nu(y)}+i\eta G(x,y)\right)\psi(y)ds(y),\quad x\in \mathbb{R}^2\backslash \Gamma.
    \end{equation*}
    Then $v^s$ satisfies (\ref{eq17}) with $g=0$. Hence, it follows from Theorem \ref{thm8} that $v^s$ satisfies the problem (DBVP) with $g=0$, so that, by Theorem \ref{thm9}, $v^s\equiv0$ in $D$. Furthermore, let $\partial v^s_{\pm}/\partial \nu$ and $v^s_{\pm}$ be defined as in the equations (\ref{eq20}) and (\ref{eq21}), respectively. Then it follows that $v^s_{+}=\partial v^s_{+}/\partial\nu=0$ on $\Gamma$. By the statements (\romannumeral2) and (\romannumeral4) in Theorem \ref{thm7} and the statement (\romannumeral2) in Theorem \ref{thm27}, we have the following jump relations
    \begin{equation}
    v^s_{-}-v^s_{+}=\psi,\quad \partial v^s_{-}/\partial \nu-\partial v^s_{+}/\partial \nu=-i\eta \psi\quad\textrm{on}~\Gamma,
    \label{eq26}
    \end{equation}
    which implies that $\psi=v^s_{-}$ and $\partial v^s_{-}/\partial \nu=-i\eta \psi$. Hence \begin{equation}
        \partial v_{-}^s/\partial \nu+i\eta v_{-}^s=0\quad\textrm{on}~\Gamma. \label{eq61}
    \end{equation}
    For $x=(x_1,x_2)\in \mathbb{R}^2$, we define $\tilde{x}:=(x_1,-x_2)$. Define
    \begin{align}
    &\tilde{\Gamma}:=\{x=(s,-f(s))\,:\,s\in \mathbb{R}\},\label{eq195}\\
    &\tilde{D}: = \{x=(x_1,x_2)\,:\,x_1\in \mathbb{R},x_2 > -f(x_1)
    \}.\label{eq196}
    \end{align}
     It can be observed that $\tilde{x}\in \tilde{\Gamma}$ (resp. $\tilde{x}\in \tilde{D}$) if and only if $x\in \Gamma$ (resp. $x\in \mathbb{R}^2\backslash \overline{D}$).

Let $\tilde{v}^s$ be defined as
\begin{equation}
\tilde{v}^s(x) := v^s(\tilde{x})
\label{eq197}
\end{equation}
for $x\in \tilde{D}$.
Let $\nu$ be the unit  normal to $\Gamma$ pointing out of $D$ and let $\tilde{\nu}$ be the unit normal to $\tilde{\Gamma}$ pointing out of $\tilde{D}$. Define
\begin{equation*}
    \frac{\partial \tilde{v}^s}{\partial \tilde{\nu}}(x):=\lim_{h\to 0+}\nabla \tilde{v}^{s}(x-h\tilde{\nu}(x))\cdot \tilde{\nu}(x),\quad \tilde{v}^s(x):=\lim_{h\to 0+}\tilde{v}^{s}(x-h\tilde{\nu}(x)),\quad x\in \tilde{\Gamma}.
\end{equation*}
It is clear that $\partial \tilde{v}^s / \partial \tilde{\nu}(x)= - \partial v^s_{-}/\partial \nu (\tilde{x})$ and $\tilde{v}^s(x)=v_{-}^s(\tilde{x})$ for $x\in \tilde{\Gamma}$.
This, together with (\ref{eq61}), \eqref{eq19}, the boundary condition $\partial \tilde{v}^s/\partial \tilde{\nu}-i\eta \tilde{v}^{s} = 0$ on $\Gamma$ as well as Theorem \ref{thm10}, implies that $\psi \in C^{0,\lambda}(\Gamma)$.
Thus, by Theorems \ref{thm11} and \ref{thm12}, $\tilde{v}^{s}$ satisfies the condition (iv) of the impedance problem (IP) in \cite[Section 2]{ZC2003}. Then by combining the statements (\romannumeral1), (\romannumeral3) and (\romannumeral5) in Theorem \ref{thm7} and the statements (\romannumeral1), (\romannumeral3) and (\romannumeral4) in Theorem \ref{thm27}, $\tilde{v}^{s}$ satisfies the conditions (\romannumeral1), (\romannumeral3) and (\romannumeral5) of the impedance problem (IP) in \cite{ZC2003}.
Hence, based on the above discussions, $\tilde{v}^{s}$ satisfies the impedance problem (IP) in \cite{ZC2003} with $D=\tilde{D}$, $\Gamma=\tilde{\Gamma}$,  $\beta = \eta/k_{-}$ and with the wave number $k=k_{-}$ and the boundary data $g =0$. Choose $\eta>0$ so that ${\rm Re} \beta >\epsilon>0$ for some $\epsilon$. Then by the uniqueness theorem \cite[Theorem 4.7]{ZC2003}, $\tilde{v}^{s} \equiv 0$ in $ \tilde{D}$, which implies that $v^s \equiv 0$ in $\mathbb{R}^2\backslash \overline{D}$. Then the jump relations in (\ref{eq26}) give that $\psi =0$. Therefore, the proof is complete.
\end{proof}

In the rest of this subsection, we assume that $\eta>0$ and $k_+>k_->0$. Now we utilize Theorem \ref{thm15} to prove the existence of the integral equation (\ref{eq35}). We use the notations defined in  Appendix \ref{section9}.
For some $c_1<0$ and $c_2>0$, we define $B(c_1,c_2)$  by
    \begin{equation*}
        B\left(c_{1}, c_{2}\right):=\left\{f \in C^{1,1}(\mathbb{R}) \,:\, f(s) \leq c_{1}, s \in \mathbb{R} \text { and }\|f\|_{C^{1,1}(\mathbb{R})} \leq c_{2}\right\}.
    \end{equation*}
Let $W_{dir} :=\{k_{f}\,:\,f \in B(c_1,c_2)\}$. By Theorem \ref{integral_equation_uniqueness}, $I-\mathscr{K}_{f}:BC(\mathbb{R})\to BC(\mathbb{R})$ is injective for all $f\in W_{dir}$, where $\mathscr{K}_{l}$ is defined by \eqref{eq65} and $I$ is the identity operator on $BC(\mathbb{R})$.
Then $T_a(W_{dir}) =W_{dir} $ for all $a \in \mathbb{R}$, where $T_{a}l(s,t)=l(s-a,t-a)$. By Lemma \ref{thm16} (\romannumeral1), $W_{dir}\subset BC(\mathbb{R},L^1(\mathbb{R}))\subset \mathbf{K}$, and for all $s\in \mathbb{R}$ satisfies
\begin{equation*}
    \sup_{k_{f} \in W_{dir}} \int_{\mathbb{R}}\left|k_{f}(s, t)-k_{f}\left(s', t\right)\right| \mathrm{d} t \rightarrow 0, \quad \text { as } s' \rightarrow s.
\end{equation*}
By the statements (\romannumeral1) and (\romannumeral2) in Lemma \ref{thm17}, $W_{dir}$ is $\sigma$-sequentially compact in $\mathbf{K}$.
Let $l\in W_{dir}$ and $f\in B(c_1,c_2)$ such that $l=k_{f}$. Choose a periodic function $f_{n}\in B(c_1,c_2)$ satisfying $f_{n}(x_{1})=f(x_{1})$ for $x_1\in[-n, n]$ and let $l_{n}:=k_{f_{n}}\in W_{dir}$. Then it yields that
\begin{equation*}
    f_{n} \stackrel{s}{\longrightarrow} f ,\quad f_{n}'\stackrel{s}{\longrightarrow} f'.
\end{equation*}
This, together with Lemma \ref{thm17} (\romannumeral2), implies that $l_{n} \stackrel{\sigma}{\longrightarrow} l$.
Since $T_{a_n} l_n=l_n$, where $a_n>0$ is the period of $f_n$, and $l_n \in B C\left(\mathbb{R}, L^1(\mathbb{R})\right)$, it follows from Theorem 2.10 in \cite{CZ1997} that \eqref{eq34} holds.
By the above discussions, $W_{dir}$ satisfies all the conditions of Theorem \ref{thm15} and thus we obtain the following results.

\begin{theorem}
Let $\eta>0$ and $k_{+}>k_{-}>0$. Then, for all $f\in B(c_{1},c_{2})$ the integral operator $I-K_{f}:BC(\mathbb{R})\to BC(\mathbb{R})$ is bijective (and so
boundedly invertible) with
\[\sup_{f\in B(c_1,c_2)}\|(I-K_{f})^{-1}\|<\infty.\]
Thus the integral equations \eqref{eq17} and \eqref{eq35} have exactly one solution for every $f\in B(c_1,c_2)$ and $g\in BC(\Gamma)$, with
\begin{equation*}
\|\psi\|_{\infty,\Gamma}=\|\tilde{\psi}\|_{\infty}\leq C\|\tilde{g}\|_{\infty}=\|g\|_{\infty,\Gamma},
\end{equation*}
where $C$ is a positive constant  depending only on $k_{\pm}$ and $B(c_1,c_2)$.
\label{thm20}
\end{theorem}

By combining Theorems \ref{thm9}, \ref{thm8}, \ref{integral_equation_uniqueness}, \ref{thm20}, \ref{thm7} (iii) and \ref{thm27} (iii), we arrive at the following theorem on the well-posedness of the problem (DBVP).

\begin{theorem} Assume $f\in B(c_1,c_2)$ and $k_{+}>k_{-}>0$. Then for every $\eta>0$ and $g\in BC(\Gamma)$, the problem (DBVP) has exactly one solution in the form
    \begin{equation*}
        u^s(x)=\int_{\Gamma}\bigg(\frac{\partial G(x,y)}{\partial \nu(y)}+i\eta G(x,y)\bigg)\psi(y)ds(y),\quad x\in D.
    \end{equation*}
    Here, the density function $\psi \in BC(\Gamma) $ is the unique solution of the integral equation
    \begin{equation*}
    A_d\psi (x):= -\frac{1}{2}\psi(x) +\int_{\Gamma}\bigg(\frac{\partial G(x,y)}{\partial \nu (y)}+i\eta G(x,y)\bigg)\psi(y) ds(y)=g(x),\quad x\in \Gamma,
    \end{equation*}
    where $A_{d}$ is bijective (and thus boundedly invertible) in $BC(\Gamma)$. Moreover, for some constant $C>0$ depending only on $B(c_1,c_2)$ and $k_{\pm}$,
    \begin{equation*}
        |u^{s}(x)|\leq C \big|x_{2}+|f_{-}|+1\big|^{1/2}\|g\|_{\infty,\Gamma},\quad x\in D,
    \end{equation*}
    for all $f\in B(c_1,c_2)$ and $g\in BC(\Gamma)$.
    \label{thm31}
\end{theorem}

\subsection{The Existence Result of the Problem (IBVP)}
\label{section11}
In this subsection, we seek a solution in the form of the single-layer potential
\begin{equation}
    u^s(x)=\int_{\Gamma}G(x,y)\psi(y)ds(y),\quad x\in D,
    \label{eq27}
\end{equation}
for some $\psi\in BC(\Gamma)$. Using the statements (\romannumeral1) and (\romannumeral3) in Theorem \ref{thm27}, we obtain $u^s$ satisfies the conditions (\romannumeral1), (\romannumeral2) and (\romannumeral4) of the problem (IBVP) with $\alpha=-1/2$. With the aid of Theorem \ref{thm12}, we have that $u^s$ satisfies the condition (\romannumeral5) of the problem (IBVP) for any $\theta \in (0,1)$. Thus, by Theorem \ref{thm27} (\romannumeral2), the single-layer potential (\ref{eq27}) is a solution of the problem (IBVP) provided $\psi$ satisfies the following integral equation
\begin{equation}
    \psi(x)+2\int_{\Gamma}\left(\frac{\partial G(x,y)}{\partial \nu(x)}-ik_{-}\beta(x)G(x,y)\right)\psi(y)ds(y)=2g(x),\quad x\in \Gamma.
    \label{eq30}
\end{equation}
Hence, we obtain the following theorem.
\begin{theorem}
    The single-layer potential (\ref{eq27}) satisfies the problem (IBVP) for $\alpha=-1/2$ and for any $\theta \in (0,1)$, provided $\psi\in BC(\Gamma)$ satisfies the boundary integral equation (\ref{eq30}).
    \label{thm29}
\end{theorem}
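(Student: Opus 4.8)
The plan is to verify the six defining conditions of the problem (IBVP) in turn for the single-layer potential $u^s$ defined by (\ref{eq27}), using the potential-theoretic facts collected in Appendix \ref{section7} (Theorems \ref{thm7}, \ref{thm27}, \ref{thm12}) together with the jump relation for the normal derivative of a single-layer potential. First I would record that, by (\romannumeral1) of Theorem \ref{thm7} and (\romannumeral1) of Theorem \ref{thm27}, the single-layer potential $u^s$ belongs to $\mathscr{R}^{I}(D)$, is $C^2$ away from $\Gamma_0$, and solves the Helmholtz equation with wavenumber $k_+$ in $U_0$ and $k_-$ in $D\ba\ov{U}_0$; since $G(x,y)$ is the two-layered Green function, the transmission conditions $u^s_+=u^s_-$ and $\pa_{x_2}u^s|_+=\pa_{x_2}u^s|_-$ on $\Gamma_0$ hold automatically because $G(\cdot,y)$ itself satisfies them. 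This disposes of conditions (\romannumeral1) and (\romannumeral2). Condition (\romannumeral4), the bound (\ref{eq3}) with $a=-1/2$, follows from the decay estimate of the single-layer potential in Theorem \ref{thm27}, which in turn rests on the asymptotic property (\ref{eq4}) of $G$ proved in Theorem \ref{thm28}; condition (\romannumeral5), the near-boundary gradient bound $|\na u^s(x)|\le C_\theta[x_2-f(x_1)]^{\theta-1}$ for every $\theta\in(0,1)$, is exactly the content of Theorem \ref{thm12}. Condition (\romannumeral6), the upward propagating radiation condition (\ref{eq1}) in $U_0$, is again supplied by the relevant item of Theorem \ref{thm27} (or Theorem \ref{thm7}), since on $\G_h$ the single-layer potential of a bounded density inherits the representation (\ref{eq1}).

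The substantive step is condition (\romannumeral3), the impedance boundary condition $\pa u^s/\pa\nu-\mathrm{i}k_-\beta u^s=g$ on $\G$. Here I would invoke the jump relations for the single-layer potential: its trace from inside $D$ is continuous across $\G$, so $u^s|_\G=V\psi$, while its normal derivative from inside $D$ picks up a jump,
\begin{equation*}
\frac{\pa u^s}{\pa\nu}(x)=\frac12\psi(x)+\int_\G\frac{\pa G(x,y)}{\pa\nu(x)}\psi(y)\,ds(y),\quad x\in\G,
\end{equation*}
which is part of (\romannumeral2) of Theorems \ref{thm7} and \ref{thm27} applied to the perturbation term $R(x,y)$ and the free-space part $\Phi_{k_-}$ of $G$ respectively (recall (\ref{eq44})). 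Substituting these two traces into $\pa u^s/\pa\nu-\mathrm{i}k_-\beta u^s$ and demanding that the result equal $g$ gives
\begin{equation*}
\frac12\psi(x)+\int_\G\left[\frac{\pa G(x,y)}{\pa\nu(x)}-\mathrm{i}k_-\beta(x)G(x,y)\right]\psi(y)\,ds(y)=g(x),\quad x\in\G,
\end{equation*}
which is precisely (\ref{eq30}) after multiplying by $2$. Thus (\romannumeral3) holds if and only if $\psi$ solves (\ref{eq30}), and the theorem follows.

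The main obstacle is the careful justification of the normal-derivative jump relation on the \emph{rough} (unbounded) surface $\G$ for the kernel $G$ rather than the classical free-space kernel, together with checking that the limit defining $\pa u^s/\pa\nu$ in the definition of $\mathscr{R}^I(D)$ converges uniformly on compact subsets of $\G$ — this is what forces the use of the smoothness hypothesis $f\in C^{1,1}$ and the decay of $\na_y G$ from (\ref{eq4}), and it is handled by writing $G=\Phi_{k_-}+R$ and treating the singular part by the standard jump formula (Theorem \ref{thm27}(\romannumeral2)) and the smooth remainder $R$ by differentiating under the integral sign (Theorem \ref{thm7}(\romannumeral2)). Once this is in hand, all remaining verifications are routine citations of the appendix results, and nothing new about well-posedness is needed at this stage since existence of a solution $\psi\in BC(\G)$ to (\ref{eq30}) will be established separately by the integral-operator theory on the real line.
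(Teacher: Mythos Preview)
Your proposal is correct and follows essentially the same route as the paper: the paper's argument (given in the paragraph immediately preceding the theorem statement) simply cites Theorem~\ref{thm27} for conditions (\romannumeral1), (\romannumeral2), (\romannumeral4), (\romannumeral6), Theorem~\ref{thm12} for condition (\romannumeral5), and Theorem~\ref{thm27}(\romannumeral2) for the jump relations yielding (\romannumeral3). Your only imprecision is the repeated citation of Theorem~\ref{thm7}, which concerns the double-layer potential and is not needed here---Theorem~\ref{thm27} alone handles the single-layer potential, including the normal-derivative jump relation (\ref{eq23}) for the full kernel $G$, so there is no need to split $G=\Phi_{k_-}+R$ and invoke separate results for each piece.
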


Let $\tilde{\psi}$ and $\tilde{g}$ be given as in \eqref{eq168} and let $\tilde{\beta} \in BC(\mathbb{R})$ be given by
\begin{equation}
   \tilde{\beta}(s):=\beta(s,f(s)), \quad s\in \mathbb{R}. \label{eq158}
\end{equation}
By parameterizing the integral in (\ref{eq30}), we have the following integral equation problem: find $\tilde{\psi}\in BC(\mathbb{R})$ such that
\begin{equation}
    \tilde{\psi}(s)+2\int_{\mathbb{R}}\left(\frac{\partial G(x,y)}{\partial \nu(x)}-ik_{-}\tilde{\beta}(s)G(x,y)\right)\sqrt{1+|f'(t)|^2}\tilde{\psi}(t)dt=2\tilde{g}(s),\quad s\in \mathbb{R},
    \label{eq37}
\end{equation}
where $x=(s,f(s))$ and $y=(t,f(t))$. Define the kernel $\kappa_{\tilde{\beta},f}$ by
\begin{equation}
    \kappa_{\tilde{\beta},f}(s,t) := -2\left(\frac{\partial G(x,y)}{\partial \nu(x)}-ik_{-}\tilde{\beta}(s)G(x,y)\right)\sqrt{1+|f'(t)|^2},\quad s,t\in \mathbb{R},\quad s\neq t,\label{eq163}
\end{equation}
where $x=(s,f(s))$ and $y = (t, f(t))$. Using this kernel, define the integral operator $K_{\tilde{\beta},f}$ by
\begin{equation*}
    (K_{\tilde{\beta},f}\phi)(s):=\int_{\mathbb{R}}\kappa_{\tilde{\beta},f}(s,t)\phi(t)dt,\quad s\in \mathbb{R},
\end{equation*}
for $\phi\in BC(\mathbb{R})$. Then the equation (\ref{eq30}) can be written as
\begin{equation*}
    (I-K_{\tilde{\beta},f})\tilde{\psi}=2\tilde{g},
\end{equation*}
where $I$ denotes the identity operator on $BC(\mathbb{R})$.
Here, we use the subscript to indicate the dependence of the kernel $\kappa_{\tilde{\beta},f}$ and the operator $K_{\tilde{\beta},f}$ on the functions $\tilde{\beta}$ and $f$.

Using Theorem \ref{thm26} for the uniqueness of the impedance problem (IBVP), we can establish the following uniqueness result for the integral equation (\ref{eq30}).

\begin{theorem}
Suppose that $k_{\pm}>0$ and $d>0$. If $\beta\in BC(\Gamma)$ with $\mathrm{Re}(\beta(x))\geq d$ on $x\in \Gamma$,
then the boundary integral equation (\ref{eq30}) has at most one solution in $BC(\Gamma)$.
\label{thm13}
\end{theorem}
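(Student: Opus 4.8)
The plan is to adapt the argument of Theorem \ref{integral_equation_uniqueness}, using the single-layer potential in place of the combined potential and a Dirichlet condition in the reflected domain in place of the impedance condition. Suppose $\tilde{\psi}\in BC(\mathbb{R})$ solves the homogeneous equation $(I-K^{I}_{\beta})\tilde{\psi}=0$; equivalently, the function $\psi\in BC(\Gamma)$ with $\psi((t,f(t)))=\tilde{\psi}(t)$ solves (\ref{eq30}) with $g\equiv 0$. Form the single-layer potential $u^{s}(x):=\int_{\Gamma}G(x,y)\psi(y)\,ds(y)$ on $\mathbb{R}^{2}\backslash(\Gamma\cup\Gamma_{0})$. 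By Theorem \ref{thm29} its restriction to $D$ satisfies the problem (IBVP) with $g\equiv 0$, and since $\mathrm{Re}\,\beta(x)\geq d>0$ on $\Gamma$, Theorem \ref{thm26} forces $u^{s}\equiv 0$ in $D$. This is the only place the hypothesis on $\beta$ enters.

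Next I would invoke the potential theory of Appendix \ref{section7} (Theorem \ref{thm27}). From the homogeneous integral equation together with the regularity statement in Theorem \ref{thm10}, one obtains $\psi\in C^{0,\lambda}(\Gamma)$ for all $\lambda\in(0,1)$, so the one-sided traces $u^{s}_{\pm}$ and $\partial u^{s}_{\pm}/\partial\nu$ exist on $\Gamma$. Since the single-layer potential is continuous across $\Gamma$, $u^{s}_{-}=u^{s}_{+}=0$ on $\Gamma$ (the second equality by $u^{s}\equiv 0$ in $D$), and likewise $\partial u^{s}_{+}/\partial\nu=0$. Hence the restriction of $u^{s}$ to $\mathbb{R}^{2}\backslash\overline{D}=\{(x_{1},x_{2}):x_{2}<f(x_{1})\}$ — which, since $f_{+}<0$, lies entirely in the homogeneous half-space $\{x_{2}<0\}$ of wave number $k_{-}$ and avoids $\Gamma_{0}$ — solves the Helmholtz equation, vanishes on $\Gamma$, obeys the growth bound (\ref{eq3}), and, by Theorem \ref{thm27} and the decay estimate (\ref{eq4}) of Theorem \ref{thm28}, satisfies the radiation condition in the downward direction.

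To close the argument I would reflect as in Theorem \ref{integral_equation_uniqueness}: put $\tilde{u}^{s}(x):=u^{s}(x_{1},-x_{2})$, $\tilde{\Gamma}:=\{(s,-f(s)):s\in\mathbb{R}\}$, $\tilde{D}:=\{(x_{1},x_{2}):x_{2}>-f(x_{1})\}$. Because $-f$ is a bounded $C^{1,1}$ function with $-f\geq|f_{+}|>0$ and the Laplacian is invariant under $x_{2}\mapsto-x_{2}$, the field $\tilde{u}^{s}$ solves $\Delta\tilde{u}^{s}+k_{-}^{2}\tilde{u}^{s}=0$ in $\tilde{D}$, vanishes on $\tilde{\Gamma}$, obeys (\ref{eq3}), and satisfies the upward propagating radiation condition with wave number $k_{-}$ in $\tilde{D}$; that is, $\tilde{u}^{s}$ solves the classical single-medium Dirichlet rough-surface problem with zero data. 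By the uniqueness of that problem (cf.\ \cite{CZ1998,ZC2003}), $\tilde{u}^{s}\equiv 0$ in $\tilde{D}$, so $u^{s}\equiv 0$ in $\mathbb{R}^{2}\backslash\overline{D}$ and $\partial u^{s}_{-}/\partial\nu=0$ on $\Gamma$. Finally the normal-derivative jump relation for the single-layer potential, $\partial u^{s}_{-}/\partial\nu-\partial u^{s}_{+}/\partial\nu=\pm\psi$ on $\Gamma$, yields $\psi\equiv 0$, i.e.\ $\tilde{\psi}\equiv 0$.

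I expect the main obstacle to be verifying that $\tilde{u}^{s}$ satisfies every hypothesis of the classical Dirichlet uniqueness theorem on $\tilde{D}$, in particular the upward propagating radiation condition and the weighted boundedness condition (\ref{eq3}); this needs the mapping properties of the single-layer operator built from the two-layered Green function (Theorem \ref{thm27}) combined with the asymptotics (\ref{eq4}) and the Hölder bootstrap $\psi\in C^{0,\lambda}(\Gamma)$. Keeping the signs in the jump relations consistent with the chosen orientation of $\nu$ is a secondary, purely bookkeeping matter.
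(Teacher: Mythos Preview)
Your proposal is correct and follows essentially the same route as the paper: form the single-layer potential, use Theorem \ref{thm26} to kill it in $D$, exploit continuity across $\Gamma$ to obtain zero Dirichlet data from below, reflect to a classical single-medium Dirichlet rough-surface problem and invoke \cite{CZ1998}, then read off $\psi\equiv 0$ from the normal-derivative jump. The H\"older bootstrap you insert is harmless but unnecessary here (unlike in Theorem \ref{integral_equation_uniqueness}), since the reflected problem is Dirichlet and the jump relations of Theorem \ref{thm27}(\romannumeral2) require only $\psi\in BC(\Gamma)$.
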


\begin{proof}
    Suppose $\tilde{\psi}\in BC(\mathbb{R})$ satisfies
    \begin{equation*}
        (I-K_{\tilde{\beta},f})\tilde{\psi}=0.
    \end{equation*}
    We only need to prove that
    $\tilde{\psi}=0$.

    Define $\psi \in BC(\Gamma)$ by $\psi(t,f(t)):=\tilde{\psi}(t), t\in \mathbb{R}$, and let $v^{s}$ in $\mathbb{R}^2\backslash \Gamma$ be the single-layer potential with the density $\psi$, that is,
    \begin{equation*}
        v^{s}(x):=\int_{\Gamma}G(x,y)\psi(y)ds,\quad x\in \mathbb{R}^2\backslash\Gamma.
    \end{equation*}
    Then $\psi$ satisfies (\ref{eq30}) with $g=0$, so that, by Theorem \ref{thm26}, $v^{s}\equiv 0$ in $D$. Furthermore, by Theorem \ref{thm27} (\romannumeral2),
    \begin{equation}
        v^{s}_{-}(x)=v^{s}_{+}(x),  \quad \partial_{\nu} v^{s}_{-}(x)- \partial_{\nu} v^{s}_{+}(x) = -\psi(x),\quad x\in \Gamma \label{eq33},
    \end{equation}
    where $v^{s}_{\pm}$ and $\partial_{\nu}v^{s}_{\pm}$ are defined as in the equations (\ref{eq21}) and (\ref{eq20}), respectively.
    Thus, $v^{s}_{-}=0$ on $\Gamma$.
    This implies that  $\tilde{v}^{s}=v^{s}_{-}=0$ on $\tilde{\Gamma}$, where $\tilde{\Gamma}$ is given as in \eqref{eq195} and
    $\tilde{v}^s$ is defined in the same way as in \eqref{eq197}.
    Moreover, let $\tilde{D}$ be given as in \eqref{eq196}. By the statements (\romannumeral1)--(\romannumeral4) of Theorem \ref{thm27}, $\tilde{v}^{s}$ satisfies the problem (P) in \cite[Section 2]{CZ1998} with
    $D=\tilde{D}$ and $\Gamma=\tilde{\Gamma}$ and with the boundary data $g=0$ on $\tilde{\Gamma}$. Hence, by Theorem 3.4 in \cite{CZ1998}, it follows that $\tilde{v}^s\equiv 0$ in $\tilde{D}$, which implies that $v^{s}\equiv 0$ in $\mathbb{R}^2\backslash \overline{D}$. Therefore, by (\ref{eq33}) we obtain $\psi=0$. The proof is now completed.
\end{proof}

Now we are going to prove the existence of the integral equation (\ref{eq30}). We will use Theorem \ref{thm15} and use the notations in Appendix \ref{section9}.
For some $d_1\geq 0$, $d_2>0$ and some function $\omega:[0,\infty)\to[0,\infty)$ such that $\omega(s)\to 0$ as $s\to 0$, let $E(d_1,d_2,\omega)$ be defined by
\begin{align*}
    E(d_1,d_2,\omega):=&\Big\{\tilde{\beta}(s)\in BC(\mathbb{R})\,:\,\mathrm{Re}(\tilde{\beta}(s))\geq d_1,s\in\mathbb{R},\|\tilde{\beta}\|_{\infty}\leq d_2, \text{ and } \\
    &\big|\tilde{\beta}(s)-\tilde{\beta}(t)\big|\leq \omega(|s-t|),s,t\in\mathbb{R}\Big\}.
\end{align*}
Note that $E(d_1,d_2,\omega)\subset BUC(\mathbb{R})$. Conversely, given $\tilde{\beta}\in BUC(\mathbb{R})$, it holds that $\tilde{\beta}\in E(d_1,d_2,\omega)$ provided $d_1\leq \inf_{s\in \mathbb{R}} \mathrm{Re}(\tilde{\beta}(s)), d_2\geq \|\tilde{\beta}\|_{\infty}$ and $\omega(h) \geq \sup_{s\in\mathbb{R},|t|\leq h}|\tilde{\beta}(s+t)-\tilde{\beta}(s)|$ for all $h\geq 0$.
We have the following existence result for the integral equation (\ref{eq30}).

\begin{theorem}
       Suppose that, for some $d>0$, $\mathrm{Re}\beta(x)\geq d$ for all $x\in \Gamma$. Then the integral equations  \eqref{eq30} and \eqref{eq37} have exactly one solution for every $f\in B(c_1,c_2)$, $g\in BC(\Gamma)$ and $\beta \in BUC(\Gamma)$. Moreover, if $d_1>0$, then there exists some constant $C>0$ depending only on $B(c_1,c_2)$, $E(d_1,d_2,\omega)$ and $k_{\pm}$ such that
       \begin{align*}
\|\psi\|_{\infty,\Gamma}=\|\tilde{\psi}\|_{\infty} \leq C \|\tilde{g}\|_{\infty} =\|g\|_{\infty,\Gamma}
       \end{align*}
       for all $f\in B(c_1,c_2)$, $g\in BC(\Gamma)$ and $\tilde{\beta}\in E(d_1,d_2,\omega)$ with $\beta$ defined in terms of $\tilde{\beta}$ by \eqref{eq158}.
    \label{thm14}
\end{theorem}

\begin{proof}
    Let $W_{imp}:=\{\kappa_{\tilde{\beta},f}\,:\,f\in B(c_1,c_2),\tilde{\beta} \in E(d_1,d_2,\omega)\}$.
    It follows from Theorem \ref{thm13} that $I-\mathscr{K}_{l}:BC(\mathbb{R})\to BC(\mathbb{R})$ is injective for all $l\in W_{imp}$, where $\mathscr{K}_{l}$ is defined by \eqref{eq65} and $I$ is the identity operator.
    Moreover, $T_{a}(W_{imp})=W_{imp}$ for all $a\in \mathbb{R}$. By Lemma \ref{thm16} (ii), $W_{imp}\subset BC(\mathbb{R},L^{1}(\mathbb{R}))\subset \mathbf{K}$ and $W_{imp}$ satisfies (\ref{eq29}).
    From the statement (i) in Lemma \ref{thm17}  and the statements (i) and (ii) in Lemma \ref{thm18}, $W_{imp}$ is $\sigma$-sequentially compact in $\mathbf{K}$.
    Let $l\in W_{imp}$, $f\in B(c_1,c_2)$ and $\tilde{\beta} \in E(d_1,d_2,\omega)$ such that $l=\kappa_{\tilde{\beta},f}$.
    For each $n\in \mathbb{N}^{+}$, choose $f_{n}\in B(c_1,c_2)$ and $\tilde{\beta}_n \in E(d_1,d_2,\omega)$ so that $f_{n}$ and $\tilde{\beta}_{n}$ are periodic with the same period and $f_{n}(x_{1})=f(x_1)$, $\tilde{\beta}_{n}(x_1)=\tilde{\beta}(x_1)$ for $x_1\in[-n, n]$.
    Then $l_{n}:=\kappa_{\tilde{\beta}_{n},f_{n}}\in W_{imp}$ and $f_{n}\xrightarrow{s}f$, $f_{n}'\xrightarrow{s}f'$, $\tilde{\beta}_{n}\xrightarrow{s}\tilde{\beta}$, so that, by Lemma \ref{thm18} (ii), $l_{n}\xrightarrow{\sigma}l$.
Since $T_{a_{n}}l_{n}=l_{n}$, where $a_{n}>0$ is the period of $f_{n}$ and $\tilde{\beta}_{n}$ and where $l_{n}\in BC(\mathbb{R},L^{1}(\mathbb{R}))$, it follows from \cite[Theorem 2.10]{CZ1997} that the condition (\ref{eq34}) holds.
Thus $W_{imp}$ satisfies all the conditions in Theorem \ref{thm15}. Therefore, the statement of this theorem follows from Theorem \ref{thm15}.
\end{proof}
By combining Theorems \ref{thm26}, \ref{thm29}, \ref{thm13}, \ref{thm14},  \ref{thm7} (iii) and \ref{thm27} (iii), we obtain the following result on the well-posedness of the problem (IBVP).

\begin{theorem}
    Assume $f\in B(c_1,c_2)$ and $k_{+},k_{-}>0$ with $k_{+}\neq k_{-}$. Let $d> 0$ and suppose that $\beta\in BUC(\Gamma)$ satisfies $\mathrm{Re}(\beta(x))\geq d$ for all $x\in \Gamma$. Then for every $g\in BC(\Gamma)$, the problem (IBVP) has exactly one solution in the form
    \begin{equation*}
        u^s(x)=\int_{\Gamma}G(x,y)\psi(y)ds(y),\quad x\in D. \quad
    \end{equation*}
    Here, the density function $\psi \in BC(\Gamma) $ is the unique solution of the integral equation
    \begin{equation*}
    A_{i}\psi(x):=\frac{1}{2}\psi(x)+\int_{\Gamma}\bigg(\frac{\partial G(x,y)}{\partial \nu(x)}-ik_{-}\beta(x)G(x,y)\bigg)\psi(y)ds(x)=g(x),\quad x\in \Gamma,
    \end{equation*}
    where $A_{i}$ is bijective (and thus boundedly invertible) in $BC(\Gamma)$. Moreover, if $d_1>0$, then for some constant $C>0$ depending only on $B(c_1,c_2)$, $E(d_1,d_2,\omega)$ and $k_{\pm}$,
    \begin{equation*}
        |u^{s}(x)|\leq C \big|x_{2}+|f_{-}|+1\big|^{1/2}\|g\|_{\infty,\Gamma},\quad x\in D,
    \end{equation*}
    for all $f\in B(c_1,c_2)$, $g\in BC(\Gamma)$ and $\tilde{\beta}\in E(d_1,d_2,\omega)$ with $\beta$ defined in terms of $\tilde{\beta}$ by $\beta(s,f(s))=\tilde{\beta}(s)$, $s\in \mathbb{R}$.
    \label{thm32}
\end{theorem}

\section{The Nystr\"{o}m Method for the Problems (DBVP) and (IBVP)}
\label{section6}
In this section, motivated by \cite{MCK2000}, we present the Nystr\"{o}m method for numerically solving the problems (DBVP) and (IBVP), based on the integral equations \eqref{eq17} and \eqref{eq30}.
Nystr\"{o}m methods have been extensively studied for computing solutions of integral equations on bounded curves (see, e.g., \cite{CK2019}). Moreover, this kind of methods was extended in \cite{MCK2000} to solve  integral equations on unbounded domains.

For $\psi \in BC(\mathbb{R})$, define the boundary integral operators
\begin{align*}
    &(S\psi)(x):= \int_{\Gamma}G(x,y)\psi(y)ds(y),\quad x\in \Gamma,\\
    &(K\psi)(x):=\int_{\Gamma}\frac{\partial G(x,y)}{\partial \nu(y)}\psi(y)ds(y),\quad x\in \Gamma,\\
    &(K'\psi)(x):= \int_{\Gamma}\frac{\partial G(x,y)}{\partial \nu(x)}\psi(y)ds(y),\quad x\in \Gamma.
\end{align*}
For $x,y\in\Gamma$, we write $x=(s,f(s))$ and $y=(t,f(t))$ for $s,t\in \mathbb{R}$. For the functions $\psi(x), g(x), \beta(x) \in BC(\Gamma) $, we define the parameterized functions $\tilde{\psi},\tilde{g},\tilde{\beta} \in BC(\mathbb{R})$ in terms of \eqref{eq168} and \eqref{eq158}.
Then the above three integral operators can be parameterized as
\begin{align*}
    &(\tilde{S}\tilde{\psi})(s) :=\int_{\mathbb{R}}\kappa_1(s,t)\tilde{\psi}(t)dt,\\
    &(\tilde{K}\tilde{\psi})(s) :=\int_{\mathbb{R}}\kappa_2(s,t)\tilde{\psi}(t)dt,\\
    &(\tilde{K}'\tilde{\psi})(s):=\int_{\mathbb{R}}\kappa_{3}(s,t)\tilde{\psi}(t)dt
\end{align*}
for $s\in \mathbb{R}$, respectively, where the kernels $\kappa_{1}$, $\kappa_{2}$ and $\kappa_{3}$ are given by
\begin{align*}
    &\kappa_{1}(s,t):=G(x,y)\sqrt{1+|f'(t)|^2}, \\
    &\kappa_{2}(s,t):=\frac{\partial G(x,y)}{\partial \nu(y)}\sqrt{1+|f'(t)|^2},\\
    &\kappa_{3}(s,t):=\frac{\partial G(x,y)}{\partial \nu(x)}\sqrt{1+|f'(t)|^2}.
\end{align*}

Next, we rewrite the kernels $\kappa_{1}$, $\kappa_{2}$, and $\kappa_{3}$. Let $H^{(1)}_{j}(t)$
denote the Hankel function of the first kind of order $j$ and let $J_{j}(t)$ and $Y_{j}(t)$ denote the Bessel function and the Neumann function, respectively, of order $j$ (see \cite{CK2019}).
According to (\ref{eq63}),  the expansion (3.98) in \cite{CK2019} for the Neumann functions, and the properties of Hankel functions $({H}_{0}^{(1)}(t))'=-{H}_{1}^{(1)}(t)$ and $H_{j}^{(1)}(t)=J_{j}(t)+iY_{j}(t)$ for $j=1,2$, the kernels $\kappa_{1}, \kappa_{2}$ and $\kappa_{3}$ have the representations
\begin{align}
    &\kappa_{1}(s,t)=a_{1}(s,t)\ln|s-t|+b_{1}(s,t),\label{eq74}\\
    &\kappa_{2}(s,t)=a_{2}(s,t)\ln|s-t|+b_{2}(s,t),\label{eq75}\\
    &\kappa_{3}(s,t)=a_{3}(s,t)\ln|s-t|+b_{3}(s,t),\label{eq76}
\end{align}
where $a_{i}$ and $b_{i}$ ($i=1,2,3$) are given by
\begin{align}
    &a_{1}(s,t):=-\frac{1}{2\pi}J_{0}(k_{-}|x-y|)\sqrt{1+|f'(t)|^2},\label{eq80}\\
    &b_{1}(s,t):=\frac{i}{4}H_{0}^{(1)}(k_{-}|x-y|)\sqrt{1+|f'(t)|^2}-a_{1}(s,t)\ln|s-t|+G_{\mathcal{R}}(x,y)\sqrt{1+|f'(t)|^2},\label{eq81}\\
    &a_{2}(s,t):=\frac{k_{-}}{2\pi}(y-x)\cdot \nu(y)\frac{J_{1}(k_{-}|x-y|)}{|x-y|}\sqrt{1+|f'(t)|^2},\label{eq82}\\
    &b_{2}(s,t):=\frac{ik_{-}}{4}H_{1}^{(1)}(k_{-}|x-y|)\frac{x-y}{|x-y|} \cdot {\nu}(y)\sqrt{1+|f'(t)|^2}-a_{2}(s,t)\ln|s-t|+c(s,t)\sqrt{1+|f'(t)|^2},\label{eq83}\\
    & a_{3}(s,t):=\frac{k_{-}}{2\pi}(x-y)\cdot \nu(x)\frac{J_{1}(k_{-}|x-y|)}{|x-y|}\sqrt{1+|f'(t)|^2},\label{eq84}\\
    & b_{3}(s,t):= \frac{ik_{-}}{4}H_{1}^{(1)}(k_{-}|x-y|)\frac{y-x}{|x-y|} \cdot {\nu}(x)\sqrt{1+|f'(t)|^2}-a_{3}(s,t)\ln|s-t|+d(s,t)\sqrt{1+|f'(t)|^2},\label{eq85}
 \end{align}
where $\nu(x)=(f'(s),-1)/\sqrt{1+|f'(s)|^2}$, $\nu(y)=(f'(t),-1)/\sqrt{1+|f'(t)|^2}$, $c(s,t)={\partial G_{\mathcal{R}}(x,y)}/{\partial \nu(y)}$ and $d(s,t)={\partial G_{\mathcal{R}}(x,y)}/{\partial \nu(x)}$.
We note from \eqref{hw-eq9} that $c(s,s)=d(s,s)$ for $s\in\mathbb{R}$.
Then using the formulas (3.97) and (3.98) in \cite{CK2019}, we can deduce that the diagonal terms $a_{1}(s,s)=a_{2}(s,s)=a_{3}(s,s)=0$ for $s\in \mathbb{R}$, and
\begin{align*}
    &b_{1}(s,s)=\left(\frac{i}{4}-\frac{\gamma}{2\pi}-\frac{1}{2\pi}\ln\left(\frac{k_{-}}{2}\sqrt{1+|f'(s)|^2}\right)\right)\sqrt{1+|f'(s)|^2}+G_{\mathcal{R}}(x,x)\sqrt{1+|f'(s)|^2},\\
    &b_{2}(s,s)=b_{3}(s,s)=-\frac{1}{4\pi}\frac{1}{1+|f'(s)|^2}f''(s)+c(s,s)\sqrt{1+|f'(s)|^2}
\end{align*}
for $s\in \mathbb{R}$, where $\gamma$ denotes the Euler constant.

Let $\chi \in C_{0}^{\infty}(\mathbb{R})$ denote the cut-off function satisfying $0\leq\chi (s)\leq 1 $ for $s\in \mathbb{R}$ and satisfying that $\chi(s)=0$ for $|s|\geq \pi$, $\chi(s)=1$ for $|s|\leq 1$ and $\chi(-s)=\chi(s)$ for $s\in \mathbb{R}$. Then $\kappa_{1}$, $\kappa_{2}$ and $\kappa_{3}$ can be written as
\begin{equation*}
    \kappa_{i}(s,t)=\frac{1}{2\pi}A_{i}(s,t)\ln \left( 4\sin^2\left(\frac{s-t}{2}\right)\right)+B_{i}(s,t), \quad s,t\in \mathbb{R},\, s\neq t,\, i=1,2,3,
\end{equation*}
where $A_{i}$ and $B_{i}$ are given by
\begin{align*}
    &A_{i}(s,t):=\pi a_{i}(s,t)\chi (s-t),\\
    &B_{i}(s,t):=a_{i}(s,t)\left(\ln |s-t|(1-\chi(s-t))-\chi(s-t)\ln\left(\frac{\sin ((s-t)/2)}{(s-t)/2}\right)\right)+b_{i}(s,t)
\end{align*}
for $i=1,2,3$. In particular, we set $B_{i}(s,s):=b_{i}(s,s)$ for all $s\in \mathbb{R}$ and $i=1,2,3.$

In the following two subsections, we will give the convergence analysis and the numerical implementation of our Nystr\"{o}m method.

\subsection{Convergence Analysis}
\label{section12}
Set the step length $h:=\pi/N$ for $N\in \mathbb{N}^{+}$ and set $t_{j}=jh$ for $j\in \mathbb{Z}$. It follows from \cite{MCK2000} that we can approximate the integral operators $\tilde{S},\tilde{K}$ and $\tilde{K}'$ by $\tilde{S}_{N},\tilde{K}_{N}$ and  $\tilde{K}_{N}'$, respectively, which are given by
\begin{equation}
    (W\psi)(s):=\sum_{j\in \mathbb{Z}}\alpha^{N,i}_{j}(s)\psi(t_{j}),\quad s\in \mathbb{R},
    \label{eq79}
\end{equation}
for $(W,i)=(\tilde{S}_{N},1),(\tilde{K}_{N},2),(\tilde{K}_{N}',3)$. Here, $\alpha^{N,i}_{j}$ is given by
\begin{align*}
    &\alpha^{N,i}_{j}(s):=R_{j}^{N}(s)A_{i}(s,t_{j})+\frac{\pi}{N}B_{i}(s,t_{j}),\quad s\in \mathbb{R},
\end{align*}
with
\begin{align*}
    &R_{j}^{N}(s):=-\frac{1}{N}\Bigg\{ \sum_{m=1}^{N-1}\frac{1}{m}\cos ( m(s-t_{j}))+\frac{1}{2N}\cos(N(s-t_{j})) \Bigg\},\quad s\in \mathbb{R},
\end{align*}
for $N\in \mathbb{N}^{+},j\in \mathbb{Z}$ and $i=1,2,3.$

Using the discretization operators $\tilde{S}_{N},\tilde{K}_{N}$ and  $\tilde{K}_{N}'$, we approximate the integral equations (\ref{eq35}) and (\ref{eq37}) by
\begin{equation}
    \tilde{\psi}^{D}_{N}(s)-2[(\tilde{K}_{N}+i\eta \tilde{S}_{N})\tilde{\psi}^{D}_{N}](s)=-2\tilde{g}(s),\quad s\in \mathbb{R}, \label{eq72}
\end{equation}
and
\begin{equation}
    \tilde{\psi}_{N}^{I}(s)+2[(\tilde{K}_{N}'-ik_{-}\tilde{\beta}\tilde{S}_{N})\tilde{\psi}_{N}^{I}](s)=2\tilde{g}(s),\quad s\in \mathbb{R},\label{eq73}
\end{equation}
respectively. Here, the functions $\tilde{\psi}_{N}^{D}, \tilde{\psi}_{N}^{I}\in BC(\mathbb{R})$ denote the solutions of the approximate systems \eqref{eq72} and \eqref{eq73}, respectively.

Let $\tilde{\psi}^D$ and $\tilde{\psi}^I$ be the solutions of the equations \eqref{eq35} and \eqref{eq37}, respectively.
In the rest of this subsection, we estimate the error terms $\|\tilde{\psi}^D_{N}-\tilde{\psi}^D\|_{\infty}$ and $\|\tilde{\psi}^I_{N}-\tilde{\psi}^I\|_{\infty}$ by applying Theorem 3.13 in \cite{MCK2000}. To this end, we define the condition $\mathbf{C_{n}}$ for any kernel function $\kappa(s,t)$ with $s,t\in\mathbb{R}$ and $s\neq t$.

 $\mathbf{Condition}$ $\mathbf{C_{n}}.$ For $n\in \mathbb{N}_{0}:=\mathbb{N}^{+}\cup\{0\}$, we say that $\kappa$ satisfies $\mathbf{C_{n}}$ if
\begin{equation*}
\kappa(s, t)=a^{*}(s, t) \ln |s-t|+b^{*}(s, t), \quad s, t \in \mathbb{R},~s \neq t,
\end{equation*}
where $a^*$, $b^*$ $\in C^n(\mathbb{R}^2)$, and there exist constants $C>0$ and $p>1$ such that for all $j,l\in \mathbb{N}_{0}$ with $j+l\leq n$, we have
\begin{equation}
    \left|\frac{\partial^{j+l} a^{*}(s, t)}{\partial s^{j} \partial t^{l}}\right| \leq C, \quad\left|\frac{\partial^{j+l} b^{*}(s, t)}{\partial s^{j} \partial t^{l}}\right| \leq C, \quad s, t \in \mathbb{R},~|s-t| \leq \pi, \label{eq43}
\end{equation}
and
\begin{equation}
    \left|\frac{\partial^{j+l} \kappa(s, t)}{\partial s^{j} \partial t^{l}}\right| \leq C(1+|s-t|)^{-p}, \quad s, t \in \mathbb{R},~|s-t| \geq \pi.\label{eq89}
\end{equation}

For $m,n\in \mathbb{N}^{+}$, we denote by $B C^n\left(\mathbb{R}^m\right)$ the Banach space of all functions whose derivatives up to order $n$ are bounded and continuous on $\mathbb{R}^m$ with the norm defined by
\begin{equation*}
    \|\psi\|_{BC^n(\mathbb{R}^m)}:=\max_{l=0,1,\cdots,n}\max\limits_{\substack{0\leqslant \alpha_{i}\leqslant l\\ \sum_{i=1}^{m}\alpha_{i}=l}}\|\partial_{1}^{\alpha_{1}}\partial_{2}^{\alpha_{2}}\cdots\partial_{m}^{\alpha_{m}} \psi \|_{\infty,\mathbb{R}^{m}},
\end{equation*}
where $\partial_{i}^{\alpha_i}\psi({x})={\partial^{\alpha_{i}}}\psi({x})/{\partial x_{i}^{\alpha_{i}}}$.

To give the convergence of Nystr\"{o}m method, we should introduce some function spaces for the functions $f$ and $\tilde{\beta}$. For $c_1<0$, $c_2>0$ and $n\in \mathbb{N}_{0}$, we define the function space
\begin{equation*}
   B_{n}(c_1,c_2):=\left\{f \in B C^{n+2}(\mathbb{R})\,:\, \sup\nolimits_{x\in\mathbb{R}} f(x) \leq c_1 ,\|f\|_{B C^{n+2}(\mathbb{R})} \leq c_2\right\}.
\end{equation*}
For $d_1\geq 0$, $d_2>0$ and $n\in \mathbb{N}_{0}$, let
\begin{equation*}
    E_{n}(d_1,d_2):=\{\tilde{\beta}\in BC^{n}(\mathbb{R})\,:\,\mathrm{Re}(\tilde{\beta}(s))\geq d_1~\textrm{for}~s\in\mathbb{R},\|\tilde{\beta}\|_{BC^{n}(\mathbb{R})}\leq d_2, \tilde{\beta}\in BUC(\mathbb{R})\}.
\end{equation*}
Note that $B_{n}(c_1,c_2)$ and $E_{n}(d_1,d_2)$ are different from $B(c_1,c_2)$ and $E(d_1,d_2, \omega)$. It can be seen that $B_{n}(c_1,c_2)\subset B(c_1,c_2)$ and $E_{n}(d_1,d_2)\subset E(d_1,d_2,\omega)$ for $n\in \mathbb{N}_{0}$ if $w(h)=\sup_{s\in\mathbb{R},|t|\leq h}|\tilde{\beta}(s+t)-\tilde{\beta}(s)|$, $h\geq 0$, for some $\tilde{\beta}\in BC(\mathbb{R})$.

The following theorem presents the properties of $\kappa_{f}$ and $\kappa_{\tilde{\beta},f}$ given in \eqref{eq162} and \eqref{eq163}.

\begin{theorem} Suppose that $k_{\pm}>0$, $k_{+}\neq k_{-}$, $c_1<0$, $c_2>0$, $d_1\geq 0$, $d_2>0$, $\eta>0$ and $n\in \mathbb{N}_{0}$,  then $\kappa_{f}$ and $\kappa_{\tilde{\beta},f}$ given in \eqref{eq162} and \eqref{eq163} satisfy the condition $\mathbf{C_{n}}$ with the same constant for all $f\in B_{n}(c_1,c_2)$ and $\tilde{\beta}\in E_{n}(d_1,d_2)$.
    \label{thm19}
\end{theorem}

\begin{proof}
Let $f\in B_{n}(c_1,c_2)$ and $\tilde{\beta}\in E_{n}(d_1,d_2)$. The proof is divided into two parts.

\textbf{Part 1}: we consider the kernel $\kappa_{f}$. In view of the formulas (\ref{eq74})--(\ref{eq76}),
    let
    \begin{equation*}
        a^{*}(s,t):=2a_2(s,t)+2i\eta a_1(s,t)
    \end{equation*}
    and
    \begin{equation*}
        b^{*}(s,t):=2b_2(s,t)+2i\eta b_1(s,t)=k_{f}(s,t)-a^{*}(s,t)\ln(|s-t|)
    \end{equation*}
    for $s,t\in \mathbb{R}$, $s\neq t$.

First, we  establish the estimates of $a^{*}$ in (\ref{eq43}). Let $x(s)=(s,f(s)),y(t)=(t,f(t))$. Then it can be seen that $\sqrt{1+|f'(t)|^2}\in BC^{n+1}(\mathbb{R})$.
By utilizing the power series expansions of Bessel functions of the first kind (see \cite[equation (10.2.2)]{OLBC2010}), we have $J_{i}(k_{-}|x(s)-y(t)|)\in BC^{n}(\mathbb{R}^2)$ for $i=0,1$.
Note that $h_0(s,t):=\nu(y(t))\cdot (x(s)-y(t))/|x(s)-y(t)|^2\in C^{n}(\mathbb{R}^2)$ (see \cite[statement (7.1.36) in Section 7.1.3]{A1997}) and $\|h_0(\cdot,\cdot)\|_{BC^n(\mathbb{R}^2)}$ is uniformly bounded for all $f$ with $\|f\|_{BC^{n+2}(\mathbb{R})}\leq c_2$ (see \cite[formulas (7.1.32) and (7.1.33) in Section 7.1.3]{A1997}).
Consequently, from the definition of $a^{*}$ and the formulas \eqref{eq80} and \eqref{eq82}, it follows that $a^{*}(s,t)\in C^{n}(\mathbb{R}^2)$ with
\begin{equation}
\left|\frac{\partial^{j+l}a^{*}(s,t)}{\partial s^j \partial t^l}\right|\leq C,\quad s,t\in \mathbb{R},~|s-t|\leq \pi,\label{eq164}
\end{equation}
for all $j,l\in \mathbb{N}_{0}$ with $j+l\leq n$, where the constant $C>0$ depends only on $c_1,c_2,k_{\pm},\eta,n$.

Second, we establish the estimates of $b^{*}$ in (\ref{eq43}).
From \eqref{eq81} and \eqref{eq83}, we write $b_{1}$, $b_{2}$ as
\begin{align*}
b_{1}(s,t)&=\left( \frac{i}{4}H_{0}^{(1)}(k_{-}|x(s)-y(t)|)+\frac{1}{2\pi}J_{0}(k_{-}|x(s)-y(t)|)\ln|s-t|\right)\sqrt{1+|f'(t)|^2}\\
&\quad +G_{\mathcal{R}}(x(s),y(t))\sqrt{1+|f'(t)|^2}
\\
&:=b_{1,p}(s,t)\sqrt{1+|f'(t)|^2}+G_{\mathcal{R}}(x(s),y(t))\sqrt{1+|f'(t)|^2}
\end{align*}
and
\begin{align*}
b_{2}(s,t)&=\left(\frac{ik_{-}}{4}H_{1}^{(1)}(k_{-}|x(s)-y(t)|)+\frac{k_{-}}{2\pi}J_{1}(k_{-}|x(s)-y(t)|)\ln|s-t|\right)|x(s)-y(t)|\\
&\quad \cdot\frac{x(s)-y(t)}{|x(s)-y(t)|^2} \cdot {\nu}(y(t))\sqrt{1+|f'(t)|^2}+\frac{\partial G_{\mathcal{R}}(x(s),y(t))}{\partial \nu(y)}\sqrt{1+|f'(t)|^2}
\\
&:=b_{2,p}(s,t)\frac{x(s)-y(t)}{|x(s)-y(t)|^2} \cdot {\nu}(y(t))\sqrt{1+|f'(t)|^2}+\frac{\partial G_{\mathcal{R}}(x(s),y(t))}{\partial \nu(y(t))}\sqrt{1+|f'(t)|^2}.
\end{align*}
By using $H_{n}^{(1)}=J_{n}+iY_{n}$ as well as the power series expansions of Bessel functions \cite[equations (10.8.1) and (10.8.2)]{OLBC2010}, $b_{1,p}$ and $b_{2,p}$ can be rewritten as
\begin{align*}
&b_{1,p}(s,t)=J_{0}(k_{-}|x(s)-y(t)|)\left(\frac{i}{4}-\frac{1}{2\pi}\ln\left(k_{-}\sqrt{1+\left|\frac{f(s)-f(t)}{s-t}\right|^2}\right)-\frac{1}{2\pi}(-\ln(2)+\gamma)\right)\\
&\quad -\frac{1}{2\pi}\sum_{r=1}^{+\infty}(-1)^{r-1}\bigg(\sum_{j=1}^{r}\frac{1}{j}\bigg)\frac{(\frac{1}{4}k_{-}^2|x(s)-y(t)|^2)^{r}}{r!},\\
&b_{2,p}(s,t)=J_{1}(k_{-}|x(s)-y(t)|)|x(s)-y(t)|\left(\frac{ik_{-}}{4}-\frac{k_{-}}{2\pi}\ln\left(k_{-}\sqrt{1+\left|\frac{f(s)-f(t)}{s-t}\right|^2}\right) +\frac{k_{-}}{2\pi}\ln(2)\right)\\
&\quad +\frac{1}{2\pi}\sum_{r=0}^{n-1}\frac{(n-r-1)!}{r!}(\frac{1}{2}k_{-}^2|x(s)-y(t)|^2)^{r}\\
&\quad+ \frac{k_{-}}{4\pi}(\frac{1}{2}k_{-}|x(s)-y(t)|)^{n}\sum_{r=0}^{+\infty}(q(r+1)-q(n+r+1))\frac{(-\frac{1}{2}k_{-}^2|x(s)-y(t)|^2)^{r}}{r!(n+r)!}|x(s)-y(t)|,
\end{align*}
where $q(x):=\Gamma'(x)/\Gamma(x)$.
Here, $\gamma$ is the Euler constant and $\Gamma(x)$ denotes the Gamma function (see \cite[(5.2.1)]{OLBC2010}).
Consequently, from the fact that $(f(s)-f(t))^2/(s-t)^2\in BC^{n}(\mathbb{R}^2)$ and the
analyticity of the power series involved in the above two formulas for $b_{1,p},b_{2,p}$, we obtain that
$b_{i,p}(s,t)\in BC^{n}(\mathbb{R}^2)$ for $i=1,2$.
By utilizing the expression \eqref{hw-eq9} of $G_{\mathcal{R}}$, it follows that $G_{\mathcal{R}}(x,y)\in C^{\infty}(\mathbb{R}^{2}_{-}\times\mathbb{R}^{2}_{-})$ and
$G_{\mathcal{R}}(x,y)$ has the form
$G_{\mathcal{R}}(x,y)=R_0(x_1-y_1,x_2+y_2)$ for some function $R_0(z_1,z_2)\in C^{\infty}(\mathbb{R}\times \mathbb{R}_{-})$ with $\mathbb{R}_-:=(-\infty,0)$.
Thus for any $m\in \mathbb{N}_{0}$ and $\alpha_{1},\alpha_{2}\in N_{0}$ with $\alpha_{1}+\alpha_{2}=m$
and for any  $h_{0},h_{1},h_{2}>0$ with $h_1<h_2$,
$
\partial_{z_1}^{\alpha_1}\partial_{z_2}^{\alpha_2}R_0(z_1,z_2)$ is bounded in $\{(z_1,z_2):|z_1|\leq h_{0} ,-h_{2} \leq z_2 \leq -h_1\}$.
By choosing $h_{0}=\pi,h_{1}=2|f_{+}|,h_{2}=2|f_{-}|$, we obtain that for any $\alpha_{1},\ldots,\alpha_{4}\in \mathbb{N}_{0}$ with $\alpha_{1}+\alpha_{2}+\alpha_{3}+\alpha_{4}=n$,
\begin{equation*}
|\partial_{x_1}^{\alpha_1}\partial_{x_2}^{\alpha_2}\partial_{y_1}^{\alpha_3}\partial_{y_2}^{\alpha_4} G_{\mathcal{R}}(x,y)|=|\partial_{x_1}^{\alpha_1}\partial_{x_2}^{\alpha_2}\partial_{y_1}^{\alpha_3}\partial_{y_2}^{\alpha_4} R_0(x_1-y_1,x_2+y_2)|\leq C
\end{equation*}
for any $x,y\in \mathbb{R}^{2}_{-}$ satisfying $|x_1-y_1|\leq \pi$ and $ 2|f_{+}|\leq |x_2+y_2|\leq 2|f_{-}|$,
where $C$ is a constant depending only on $c_1,c_2,k_{\pm},n$.
Hence, combining the above analysis and the definition of $b^{*}$, we deduce that $b^{*}\in C^{n}(\mathbb{R}^2)$ and
\begin{equation}
\left|\frac{\partial^{j+l}b^{*}(s,t)}{\partial s^j \partial t^l}\right|\leq C,\quad s,t\in \mathbb{R},~|s-t|\leq \pi,\label{eq190}
\end{equation}
for all $j,l\in \mathbb{N}_{0}$ with $j+l\leq n$, where the constant $C>0$ depends only on $c_1,c_2,k_{\pm},\eta,n$.

Third, we show that $\kappa_{f}$ satisfies (\ref{eq89}).  In fact, it is clear from Theorem \ref{thm38} {\rm (i)} that for $x,y\in  \overline{\mathbb{R}_{-}^{2}}$,
    \begin{equation*}
        \left|G(x,y)\right| \leq C (1+|x_2|)(1+|y_2|) \left( |x-y|^{-\frac{3}{2}}+ |x-y'|^{-\frac{3}{2}}\right)
             \quad \text{ for } x\neq y~\textrm{and}~x\neq y'.
    \end{equation*}
    This, together with the regularity estimates for solutions to elliptic partial differential equations (see \cite[Theorem 3.9]{GT1983}) and the symmetry property  $G(x,y)=G(y,x)$ for $x,y\in \mathbb{R}^2\backslash \Gamma_0$ with $x\neq y$ (see \cite[(2.28)]{P2016}), implies that for any $\delta>0$, $m\in \mathbb{N}^{+}$ and $\alpha_1,\alpha_2,\alpha_3,\alpha_4\in \mathbb{N}^{+}\cup \{0\}$ with $\alpha_1+\alpha_2+\alpha_3+\alpha_4=m$,
    \begin{equation*}
|\partial_{x_1}^{\alpha_1}\partial_{x_2}^{\alpha_2}\partial_{y_1}^{\alpha_3}\partial_{y_2}^{\alpha_4} G(x,y)| \leq C (1+|x_2|)(1+|y_2|) \left( |x-y|^{-\frac{3}{2}}+ |x-y'|^{-\frac{3}{2}}\right)
    \end{equation*}
    for all $x,y\in \Gamma$ satisfying $|x-y|>\delta$ and $|x-y'|>\delta$, where the constant $C$ depends only on $\delta,m,c_1$. Furthermore, since $f\in BC^{n+2}(\mathbb{R})$, we have that $\sqrt{1+f'(s)}$ and $\nu((s,f(s)))$ belong to $BC^{n}(\mathbb{R})$. Thus it follows from the definition of the kernel $\kappa_{f}$ in \eqref{eq162} that
\begin{equation}
\left|\frac{\partial^{j+l} \kappa_{f}(s, t)}{\partial s^{j} \partial t^{l}}\right| \leq C(1+|s-t|)^{-\frac{3}{2}}, \quad s, t \in \mathbb{R},~|s-t| \geq \pi,\label{eq165}
\end{equation}
for any $j,l\in\mathbb{N}_0$ satisfying $j+l\leq n$, where the constant $C>0$ depends only on $c_1,c_2,k_{\pm},\eta,n$. 

\textbf{Part 2}: we consider the kernel $\kappa_{\tilde{\beta},f}$. With a slight abuse of notations, define
\begin{equation*}
    a^{*}(s,t):=-2a_3(s,t)+2ik_{-}\tilde{\beta}a_1(s,t)
\end{equation*}
and
\begin{equation*}
    b^{*}(s,t):=-2b_3(s,t)+2ik_{-}\tilde{\beta}b_1(s,t)=\kappa_{\tilde{\beta},f}(s,t)-a^{*}(s,t)\ln(|s-t|)
\end{equation*}
for $s,t\in \mathbb{R},s\neq t$. For $a^{*}$ and $b^{*}$ given in this part, since $\tilde{\beta}\in BC^{n}(\mathbb{R})$, we can use similar arguments as in Part 1 to obtain that $a^{*},b^{*}\in C^{n}(\mathbb{R}^2)$ and that the estimates \eqref{eq164} and \eqref{eq190} also hold for $a^{*}$ and $b^{*}$, where the constant $C$ depends on $c_1,c_2,d_1,d_2,k_{\pm},n$. Moreover, by the definition of $k_{\tilde{\beta},f}$ in \eqref{eq163}, the estimates in \eqref{eq165} also hold for $k_{\tilde{\beta},f}$, where the constant $C$ depends only on $c_1,c_2,d_1,d_2,k_{\pm},n$.
Therefore, the proof is complete.
\end{proof}

As a direct consequence of Theorems \ref{thm31}, \ref{thm32} and \ref{thm19}, we can apply \cite[Theorems 2.1 and 3.13]{MCK2000} to obtain the following two theorems on the convergence of our Nystr\"{o}m method.
\begin{theorem}\label{hthm1}
    Suppose $n\in \mathbb{N}^{+}$, $k_{+}>k_{-}>0$ and $c_1<0,c_2>0$. Let $\eta>0$. Then there exist $\tilde{N}\in \mathbb{N}^{+}$ and a constant $C>0$ such that for any integer $N\geq \tilde{N}$ and $f\in B_{n}(c_1,c_2)$, the equation {\rm (\ref{eq72})} has a unique solution $\tilde{\psi}^{D}_{N}$ and we have
    \begin{equation*}\big\Vert\tilde{\psi}^D-\tilde{\psi}^D_N\big\Vert_{\infty}\leq CN^{-n}\big\Vert \tilde{g}\big\Vert_{BC^n(\mathbb{R})}, \end{equation*}
    where $\tilde{\psi}^D$ is  the unique solution of the equation \eqref{eq35}.
\end{theorem}

\begin{theorem}\label{hthm2}
    Suppose $n\in \mathbb{N}^{+}$, $k_{\pm}>0$ and $c_1<0,c_2>0,d_1> 0,d_2>0$. Then there exist $\tilde{N}\in \mathbb{N}^{+}$ and a constant $C>0$ such that for any integer $N\geq \tilde{N}$ and for $f\in B_{n}(c_1,c_2)$ and $\tilde{\beta}\in E_{n}(d_1,d_2)$, the equation {\rm (\ref{eq73})} has a unique solution $\tilde{\psi}^{I}_{N}$ and we have
    \begin{equation*}\big\Vert\tilde{\psi}^I-\tilde{\psi}^I_N\big\Vert_{\infty}\leq CN^{-n}\big\Vert \tilde{g}\big\Vert_{BC^n(\mathbb{R})}, \end{equation*}
    where $\tilde{\psi}^I$ is the unique solution of the equation \eqref{eq37}.
\end{theorem}

\subsection{Numerical implementation}
\label{section15}
Now we describe the numerical implementation of our Nystr\"{o}m method.
With the benefit of the convergence results given in Theorems \ref{hthm1} and \ref{hthm2}, the main part of our method is to numerically solve the discretized equations (\ref{eq72}) and (\ref{eq73}) instead of solving the equations (\ref{eq35}) and (\ref{eq37}). For the integrals arising in (\ref{eq72}) and (\ref{eq73}), we truncate the infinite interval $(-\infty,+\infty)$
into a finite interval $[-T, T]$ with $T$ satisfying $T/h\in \mathbb{N}^{+}$, where $h$ is given as in Section \ref{section12}. That is, the integral operators
$\tilde{S}_{N}$, $\tilde{K}_{N}$ and $\tilde{K}_{N}'$
defined in (\ref{eq79}) are approximated by
\begin{equation*}
    (W\psi)(s)\approx \sum_{j=-T/h}^{j=T/h}\alpha^{N,i}_{j}(s)\psi(t_{j}),\quad s\in \mathbb{R},
\end{equation*}
for $(W,i)=(\tilde{S}_{N},1)$, $(\tilde{K}_{N},2)$ and $(\tilde{K}_{N}',3)$, respectively.
Then by using these approximations and choosing $s= t_{j}$ for $j=-T/h, -T/h+1, \ldots,T/h$ in (\ref{eq72}) and (\ref{eq73}), the equations (\ref{eq72}) and (\ref{eq73}) are reduced to two finite linear systems
which can be solved to obtain the approximate values of the density functions $\tilde{\psi}^D$ and $\tilde{\psi}^I$ at the points $s= t_{j}$ ($j=-T/h, -T/h+1, \ldots,T/h$).
Finally, by using the relations in
\eqref{eq168} and \eqref{eq158} and the formulas (\ref{eq16}) and (\ref{eq27}), we can apply the trapezoidal rule to calculate the approximate values of the scattered wave $u^s$ at any points in the domain $D$.
It is worth mentioning that in our method, we use the approach given in \cite[Section 2.3.5]{P2016} to compute the two-layered Green function $G(x,y)$ with high accuracy.
Moreover, to accelerate our algorithm, we divide the matrices involved in the above linear systems into several matrices with smaller sizes and compute these small matrices in parallel.

In the rest of this subsection, numerical experiments will be carried out to demonstrate the feasibility of our Nystr\"{o}m method for the problems (DBVP) and (IBVP).
For the problem (DBVP), the parameter $\eta$ involved in relevant
integral equations is set to be $\eta=\sqrt{k_+ k_-}$.

To investigate the performance of our method,
we will choose a sampling set $S$ in $D$ with finite points and define the following relative error:
for any wave field $w$,
\begin{equation*}
    E_{rel}(w):=\frac{\|w^{num}-w\|_{\infty,S}}{\|w\|_{\infty,S}},
\end{equation*}
where $w^{num}$ is a vector consisting of the approximate values of $w$ at the points in $S$ by using our Nystr\"{o}m method
and $\|\cdot \|_{\infty,S}$ denotes the infinity norm for any function defined in $S$.

\begin{figure}[!htbp]
    \subfigure[]{
      \begin{minipage}[b]{0.45\textwidth}
        \centering
        \includegraphics[width=\textwidth]{./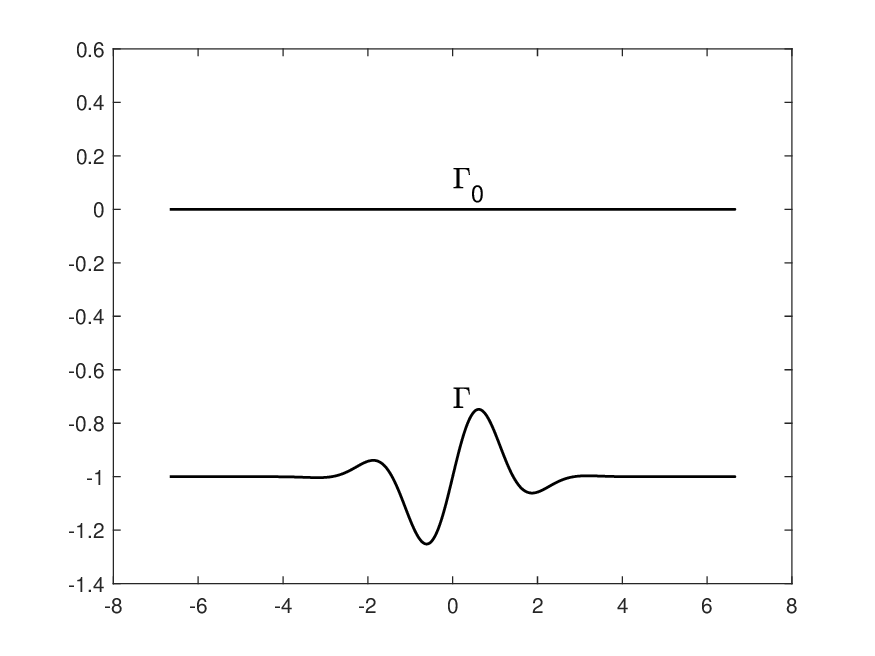}
      \end{minipage}}%
    \hfill
    \subfigure[]{
      \begin{minipage}[b]{0.45\textwidth}
        \centering
        \includegraphics[width=\textwidth]{./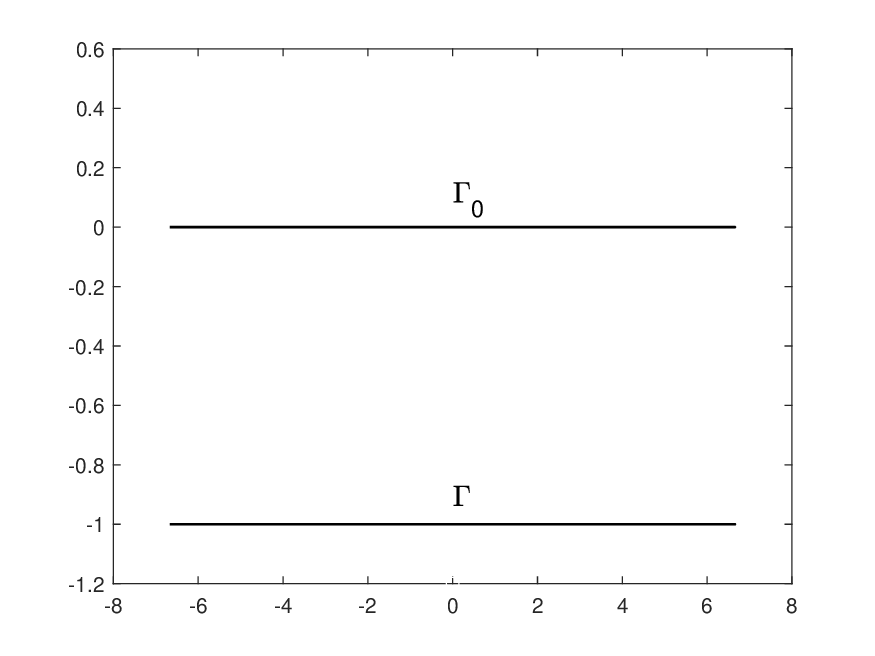}
      \end{minipage}}
    \subfigure[]{
      \begin{minipage}[b]{0.45\textwidth}
        \centering
        \includegraphics[width=\textwidth]{./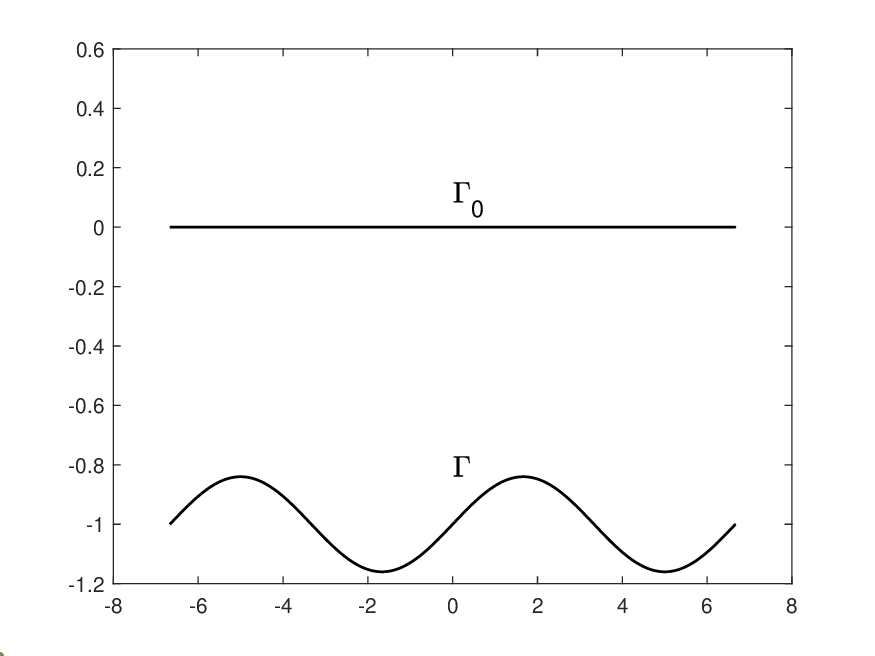}
      \end{minipage}}%
\hfill
    \subfigure[]{
      \begin{minipage}[b]{0.45\textwidth}
        \centering
\includegraphics[width=\textwidth]{./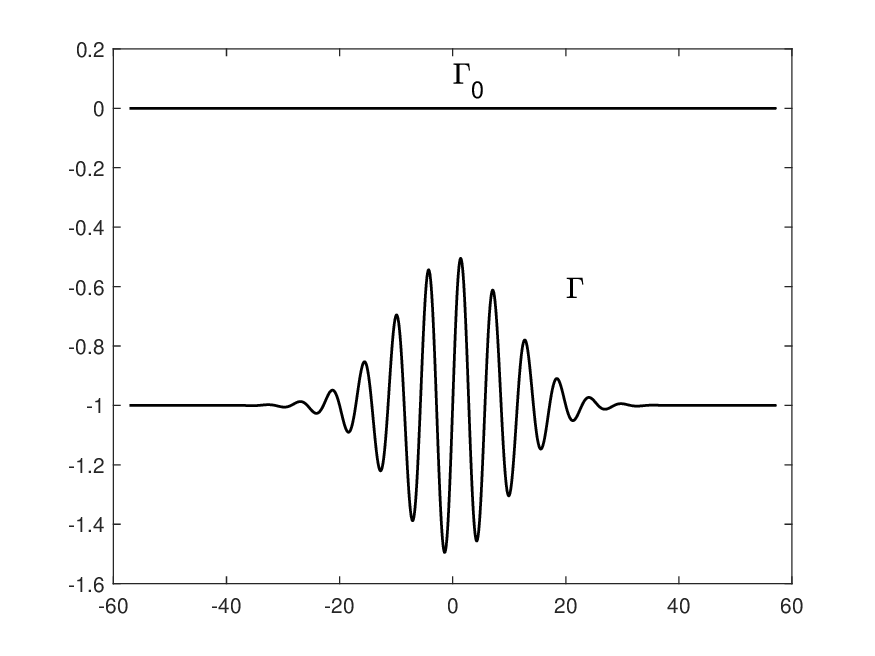}
      \end{minipage}}
\caption{(a)--(d) show the geometries of Examples 1--4, respectively.}
  \label{fig2}
  \end{figure}

\textbf{Example 1.} Consider the rough surface $\Gamma$ with (see Figure \ref{fig2} (a))
\begin{equation*}
f(t)=-1+0.3\sin(0.7\pi t)e^{-0.4t^2}.
\end{equation*}
We choose the wave numbers $k_{+}=3.5$, $k_{-}=2.7$ and choose $S$ to be the set of $100$ points uniformly distributed on the line segment $\{(x_1,0.56):|x_1|\leq 30\}$.
In the first case, we consider the problem (DBVP).
Let $u^s_{dir}$ be the solution of the problem (DBVP) with the boundary data $g = v|_\Gamma$, where
$v(x) := G(x,y_0)$ denotes the two-layered Green function at the source point $y_0=(1, -1.3)\in \mathbb{R}^2\backslash \overline{D}$. It is easily verified that $u^s_{dir}=v$ in $D$ and thus the exact values of $u^s_{dir}$ can be obtained by using the approach given in \cite[Section 2.3.5]{P2016}.
The second and third columns of Table \ref{t1} (a)   present the relative errors $E_{rel}(u^s_{dir})$ of our method for $T=20\pi$ and $T=40\pi$, respectively, with $N=8, 16, 32, 64$.
In the second case, we consider the problem (IBVP).
We choose $\beta \equiv 1$.
Let $u^s_{imp}$ be the solution of the problem (IBVP) with the boundary data $g = (\partial v/\partial \nu - ik_{-}\beta v)|_\Gamma$, where $v$ is given as above.
It is also easily verified that $u^s_{imp}=v$ in $D$ and thus the exact values of $u^s_{imp}$ can also be obtained as in the first case.
The second and third columns of Table \ref{t1} (b)  present the relative errors $E_{rel}(u^s_{imp})$ of our method for $T=20\pi$ and $T=40\pi$, respectively, with $N=8, 16, 32, 64$.
It can be seen from Table \ref{t1} that the relative errors are smaller as $N$ becomes larger.
Moreover, it can be observed from Table \ref{t1} that for each problem and for sufficiently large $N$, the relative error for the case $T=40\pi$ is smaller than that for the case $T=20\pi$.

\begin{table}
    \centering
    \begin{minipage}[t]{0.45\textwidth}
    \begin{tabular}{|lr|l|l|}
        \hline
        && {$T=20\pi$} & $T=40\pi$ \\ \hline
        & N   &  {$E_{rel}(u^s_{dir})$} &  $E_{rel}(u^s_{dir})$ \\
        $k_{+}=3.5$ & 8   & {0.0015} &    0.0015 \\
        $k_{-}=2.7$ & 16  & {2.7362e-06} & 3.3423e-06 \\
    & 32  & {9.3889e-07} & 8.8875e-07\\
    & 64  & {5.7465e-07} & 9.8288e-08 \\
    \hline
        \end{tabular}
        \caption*{(a) The problem (DBVP)}
    \end{minipage}
    \hfill
    \begin{minipage}[t]{0.45\textwidth}
    \begin{tabular}{|lr|l|l|}
     \hline
     & & {$T=20\pi$} & $T=40\pi$ \\ \hline
     & N   & {$E_{rel}(u^s_{imp})$} & $E_{rel}(u^s_{imp})$ \\
    $k_{+}=3.5$ & 8   & {0.0043} &    0.0043 \\
    $k_{-}=2.7$ & 16  & {8.2946e-06} & 8.6324e-06 \\
& 32  & {7.9876e-07} & 5.2844e-07 \\
& 64  & {4.7788e-07} & 1.8623e-07 \\ \hline
    \end{tabular}
    \caption*{(b) The problem (IBVP)}
    \end{minipage}
    \caption{Relative errors against $N$ for Example 1.}
 \label{t1}
\end{table}

\textbf{Example 2.}
Consider the rough surface $\Gamma$ as the flat plane $x_2=-1$ (see Figure \ref{fig2} (b)). We choose the wave numbers $k_{+}=3.5$, $k_{-}=2.7$ and choose $S$ to be the set of 100 points uniformly distributed on the line segment $\{(x_1,-0.2):|x_1|\leq 30\}$.
We set $\theta_{d}=\frac{4}{3}\pi$.
For such $\theta_{d}$,
let
$d$, $d_{r}$, $d_{t}$, the plane wave $u_{pl}^{i}(x)$ and the reference wave $u^{0}_{pl}(x)$ be given as in Section \ref{section2}.
Here, note that since $|(k_-/k_+)^{-1}\cos(\theta_d)|<1$,
then $d^t=(\cos(\theta^t_d),\sin(\theta^t_d))$ with $\theta^t_d\in[\pi,2\pi]$ satisfying $\cos(\theta^t_d)=(k_-/k_+)^{-1}\cos(\theta_d)$.
Further, let $d_n=(\cos(\theta^t_d),-\sin(\theta^t_d))$ be the reflection of $d_t$ about the $x_1$-axis.
In the first case, we consider the problem (DBVP).
Let $u^{s}_{dir}$ be the solution of the problem (DBVP) with the boundary data
$g = -u^{0}_{pl}|_\Gamma$.
Then it can be seen from Section \ref{section2} that $u^{tot}_{dir}(x):=u^{0}_{pl}(x)+u^s_{dir}(x)$ is
the total field of the scattering problem (\ref{eq2})--(\ref{eq3})
with the sound-soft boundary $\Gamma$
and with
the incident wave $u^i(x)$ given by $u_{pl}^{i}(x)$.
Moreover,
since the rough surface $\Gamma$ is a plane, the total field $u^{tot}_{dir}$ has the analytical expression (see \cite[Section 2.1.3]{C1999})
\begin{equation}
    u^{tot}_{dir}(x) = \begin{cases}
        e^{ik_{+}x\cdot d} + \lambda_{d_r} e^{ik_{+}x\cdot d_{r}},\quad x_2>0,\\
        \lambda_{d_t}e^{ik_{-}x\cdot d_t} + \lambda_{d_n} e^{ik_{-}x\cdot d_{n}},\quad -1\leq x_2\leq0,
    \end{cases}
    \label{eq58}
\end{equation}
with $\lambda_{d_r},\lambda_{d_t},\lambda_{d_n}\in\mathbb{C}$ satisfying the system of linear equations
\begin{equation*}
 \begin{bmatrix}
-1 & 1 & 1 \\
k_{+}\sin (\theta_{d}) & k_{-}\sin (\theta_{d}^{t}) & -k_{-}\sin (\theta_{d}^{t}) \\
0 & \exp(-ik_{-}\sin (\theta_{d}^{t})) & \exp(ik_{-}\sin (\theta_{d}^{t}))
\end{bmatrix}
\begin{bmatrix}\lambda_{d_r}\\
\lambda_{d_t}\\
\lambda_{d_n}
\end{bmatrix}=\begin{bmatrix}
1\\
k_{-}\sin (\theta_{d})\\
0
\end{bmatrix},
\end{equation*}
which is due to the transmission condition \eqref{eq172} of $u^{tot}_{dir}$ on $\Gamma_{0}$  and the Dirichlet boundary condition $u^{tot}_{dir}=0$ on $\Gamma$.
Thus we can obtain the exact values of $u^{tot}_{dir}(x)$ by the above formulas.
The second and third
columns of Table \ref{t2} (a) present the relative errors $E_{rel}(u^{tot}_{dir})$ for $T=20\pi$ and $T=40\pi$, respectively, with $N=8,16,32,64$,
where the approximate values of $u^{tot}_{dir}(x)$ are obtained by applying our method to the numerical computations of
$u^{s}_{dir}(x)$.
In the second case, we consider the problem (IBVP).
We choose $\beta \equiv 1$ on $\Gamma$.
Let $u^{s}_{imp}$ be the solution to the problem (IBVP) with $g = -\partial u^{0}_{pl}/\partial\nu|_\Gamma + ik_{-}\beta u^{0}_{pl}|_\Gamma$.
Then it can be seen from Section \ref{section2} that $u^{tot}_{imp}(x):=u^0_{pl}(x)+u^s_{imp}(x)$ is the total field of the scattering problem (\ref{eq2})--(\ref{eq4}) with the impedance boundary $\Gamma$ and with
the incident wave $u^i(x)$ given by  $u_{pl}^{i}(x)$.
Similarly to the first case, $u^{tot}_{imp}$ has the analytical expression \eqref{eq58}
with $\lambda_{d_r},\lambda_{d_t},\lambda_{d_n}\in\mathbb{C}$ satisfying the system of linear equations
\begin{equation*}
\begin{bmatrix}
-1 & 1 & 1 \\
k_{+}\sin (\theta_{d}) & k_{-}\sin (\theta_{d}^{t}) & -k_{-}\sin (\theta_{d}^{t}) \\
0 & -\sin(\theta_{d}^{t})-\beta & (\sin (\theta_{d}^{t})-\beta)\exp(2ik_{-}\sin (\theta_{d}^{t}))
\end{bmatrix}\begin{bmatrix}\lambda_{d_r}\\
\lambda_{d_t}\\
\lambda_{d_n}
\end{bmatrix}=\begin{bmatrix}
1\\
k_{-}\sin (\theta_{d})\\
0
\end{bmatrix},
\end{equation*}
which is due to the transmission condition \eqref{eq172} of $u^{tot}_{imp}$ on $\Gamma_{0}$  and the impedance boundary condition $\partial u^{tot}_{imp} / \partial \nu - ik_{-}\beta u^{tot}_{imp}=0$ on $\Gamma$.
Thus we can also obtain the exact values of $u^{tot}_{imp}(x)$.
The second and third columns of Table \ref{t2} (b) show the relative errors  $E_{rel}(u^{tot}_{imp})$ for  $T=20\pi$ and $T=40\pi$, respectively, with $N=8,16,32,64$,
where the approximate values of $u^{tot}_{imp}(x)$ are obtained by applying our method to the numerical computations of
$u^{s}_{imp}(x)$.
It is shown in Table \ref{t2} that the relative errors for the case $T=40\pi$ are smaller than those for the case $T=20\pi$.

\begin{table}
    \centering
    \begin{minipage}[t]{0.45\textwidth}
    \begin{tabular}{|lr|l|l|}
    \hline
    & & {$T=20\pi$} & $T=40\pi$ \\ \hline
& N &  {$E_{rel} (u^{tot}_{dir})$}   &    $E_{rel}(u^{tot}_{dir})$ \\
$k_{+}=3.5$ & 8 & {6.6499e-04}
 &   2.0781e-04 \\
$k_{-}=2.7$ & 16 &  {4.0849e-04}
 &  1.0260e-04\\
 & 32 & {2.4464e-04}&  6.0844e-05
 \\
 & 64 &  {1.5477e-04} &  3.7765e-05
  \\ \hline
    \end{tabular}
    \caption*{(a) The problem (DBVP)}
    \end{minipage}
    \hfill
    \begin{minipage}[t]{0.45\textwidth}
    \begin{tabular}{|lr|l|l|}
    \hline
    & & {$T=20\pi$} & $T=40\pi$ \\  \hline
& N &  {$E_{rel}(u^{tot}_{imp})$} & $E_{rel}(u^{tot}_{imp})$      \\
$k_{+}=3.5$ & 8 & {1.6147e-04} & 4.4780e-05 \\
$k_{-}=2.7$ & 16 &  {9.0964e-05}& 2.4418e-05 \\
 & 32 & {5.3144e-05}&  1.4069e-05 \\
 & 64 &  {3.2544e-05}&  8.4220e-06  \\ \hline
    \end{tabular}
    \caption*{(b) The problem (IBVP)}
    \end{minipage}
    \caption{Relative errors against $N$ for Example 2.}
    \label{t2}
\end{table}

\textbf{Example 3.}
Consider the rough surface $\Gamma$ to be a periodic curve with (see Figure \ref{fig2} (c))
\begin{equation*}
    f(t) = -1+0.16\sin(0.3\pi t).
\end{equation*}
We choose the wave numbers $k_{+}=4,k_{-}=3$ and choose
$S$ to be the set of $100$ points uniformly  distributed on the line segment $\{(x_1,0.3):|x_1|\leq 30\}$.
Moreover, we set $T=40\pi$.
Let the plane wave $u_{pl}^{i}(x)$ and the reference wave $u^{0}_{pl}(x)$ be given as in Section \ref{section2}, where $\theta_d$ is chosen to be $17/12\pi$.
Let $u_{dir}^{s}$ be the solution of the problem (DBVP) with the boundary data $g=-u^{0}_{pl}|_\Gamma$ and let $u_{imp}^{s}$ be the solution of the problem (IBVP) with $g = -\partial u^{0}_{pl}/\partial\nu|_\Gamma + ik_{-}\beta u^{0}_{pl}|_\Gamma$, where we choose $\beta \equiv 1$ on $\Gamma$.
Then similarly to Example 2, $u_{dir}^{tot}(x):=u^{0}_{pl}(x)+u_{dir}^{s}(x)$ (resp. $u^{tot}_{imp}(x):=u^{0}_{pl}(x)+u^{s}_{imp}(x)$) is the total field of the scattering problem (\ref{eq2})--(\ref{eq3}) with the sound-soft boundary $\Gamma$ (resp. the scattering problem (\ref{eq2})--(\ref{eq4}) with the impedance boundary $\Gamma$), where the incident wave $u^i(x)$ is given by $u^{i}_{pl}(x)$. In this example, we compute the approximate values of $u_{dir}^{tot}(x)$ and $u^{tot}_{imp}(x)$ by our method, which are obtained in a same way as in Example 2.

Note that the rough surface $\Gamma$ and $u^i_{pl}(x)e^{-ik_{+}\cos(\theta_{d})x_1}$ for $x\in\Gamma$ have the same period $L_{p} = 2/0.3$ in $x_1$-direction.
Thus, to test the performance of our method,
we model the considered scattering problems as the quasi-periodic scattering problems (see, e.g., \cite{CW2003}) and then use the finite element method with the technique of perfectly
matched layer (PML) to compute the PML solutions $u^{tot}_{dir,PML}$ and $u^{tot}_{imp,PML}$, which are the approximations of $u_{dir}^{tot}$ and $u^{tot}_{imp}$, respectively. To be more specific, we use the PML technique given in \cite{CW2003} with the following settings.
The problem is solved in the domain $\Omega_{PML}:=\{(x_1,x_2)\,:\,x_1\in (0,L_{p}), f(x_1) < x_2< 3\}$.
The PML layer is chosen to be $\{(x_1,x_2):x_1\in(0,L_p), 3<x_2<3+\delta\}$ with the thickness of the PML layer $\delta=2.243995$ as suggested in \cite[Section 6]{CW2003}.
The number of nodal points is chosen to be $335497$ with using uniform mesh refinement.
The finite element method with the PML technique is implemented by the open-source software freeFEM++ (see \cite{H2012}).
By the above approach, we can compute the approximate values of
$u^{tot}_{dir,PML}$ and $u^{tot}_{imp,PML}$ in the domain $\Omega_{PML}$. Moreover, the approximate values of $u^{tot}_{dir,PML}$ and $u^{tot}_{imp,PML}$ in the domain $\Omega_{L}:=\{(x_1,x_2)\,:\,x_1\in (-\infty,+\infty), f(x_1) < x_2< 3\}$ can be obtained by using the quasi-periodic properties of $u^{tot}_{dir,PML}$ and $u^{tot}_{imp,PML}$ (see \cite{CW2003}), i.e.,
  \begin{align*}
  &u^{tot}_{dir,PML}(x)e^{-ik_{+}\cos(\theta_{d})x_1}=u^{tot}_{dir,PML}(x+(L_p,0))e^{-ik_{+}\cos(\theta_{d})(x_1+L_p)},\\
  &u^{tot}_{imp,PML}(x)e^{-ik_{+}\cos(\theta_{d})x_1}=u^{tot}_{imp,PML}(x+(L_p,0))e^{-ik_{+}\cos(\theta_{d})(x_1+L_p)}
  \end{align*}
for any $x=(x_1,x_2)$ in $\Omega_{L}$.

Table \ref{t3} shows the approximate values of  $u^{tot}_{dir}(x)$ and $u^{tot}_{imp}(x)$ with $N=8,16,32,64$ by using our Nystr\"{o}m method as well as the approximate values of $u^{tot}_{dir,PML}(x)$ and $u^{tot}_{imp,PML}(x)$ by using the finite element method with the PML technique, where $x$ is chosen to be the point $(10,0.3)$.  Figure \ref{fig3} presents the real parts of the approximate values of $u^{tot}_{dir}(x)$ and $u^{tot}_{imp}(x)$
by using our Nystr\"{o}m method with $N=64$ (blue circles) as well as the real parts of the approximate values of $u^{tot}_{dir,PML}(x)$ and $u^{tot}_{imp,PML}(x)$ by using the finite element method with the PML technique (orange dots), where we choose $x$ to be
some discrete points on the segment line $\{(x_1,0.3):0\leq x_1\leq 10\}$.

\begin{table}[!hbtp]
    \centering
    \begin{tabular}{|lr|ll|ll|}
    \hline
              &   &\multicolumn{2}{|c}{The problem (DBVP)}                  & \multicolumn{2}{|c|}{The problem (IBVP)}                \\ \hline
              & N & $\textrm{Re}(u^{tot}_{dir})$    &  $\textrm{Im}(u^{tot}_{dir})$    & $\textrm{Re}(u^{tot}_{imp})$    &  $\textrm{Im}(u^{tot}_{imp})$    \\
    $k_{+}=4$ & 8 & 1.62243876
    & 0.57170715 &    0.35175005      & 0.84347716
    \\
    $k_{-}=3$ & 16 & 1.62236304
    &  0.57170811 &  0.35174104      & 0.84346688
    \\
              & 32 & 1.62228626
              &  0.57168368 & 0.35173401                & 0.84346096
              \\
              & 64 &  1.62224382  & 0.57167564
              & 0.35172998        & 0.84345808
              \\\hline
    && $\textrm{Re}(u^{tot}_{dir,PML})$    & $\textrm{Im}(u^{tot}_{dir,PML})$ &$\textrm{Re}(u^{tot}_{imp,PML})$&$\textrm{Im}(u^{tot}_{imp,PML})$\\
    &PML solution        &1.62194914     & 0.56924364
    &  0.35164894      & 0.84255734
    \\ \hline
    \end{tabular}
    \caption{The approximate values of  $u^{tot}_{dir}(x)$ and $u^{tot}_{imp}(x)$ in Example 3 by using our Nystr\"{o}m method as well as the approximate values of $u^{tot}_{dir,PML}(x)$ and $u^{tot}_{imp,PML}(x)$ in Example 3 by using the finite element method with the PML technique, where $x$ is chosen to be the point $(10,0.3)$.}
    \label{t3}
\end{table}

\begin{figure}[!htbp]
    \subfigure[$\mathrm{Re}(u^{tot}_{dir})$ and $\mathrm{Re}(u^{tot}_{dir,PML}$)]{
      \begin{minipage}[b]{0.45\textwidth}
        \centering
        \includegraphics[width=\textwidth]{./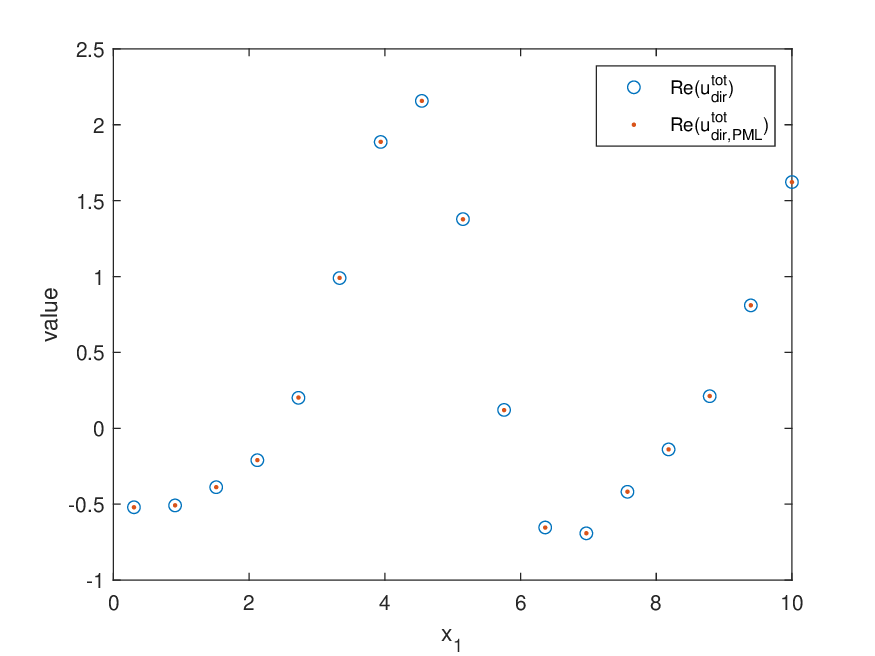}
      \end{minipage}}%
    \hfill
    \subfigure[$\mathrm{Re}(u^{tot}_{imp})$ and $\mathrm{Re}(u^{tot}_{imp,PML}$)]{
      \begin{minipage}[b]{0.45\textwidth}
        \centering
        \includegraphics[width=\textwidth]{./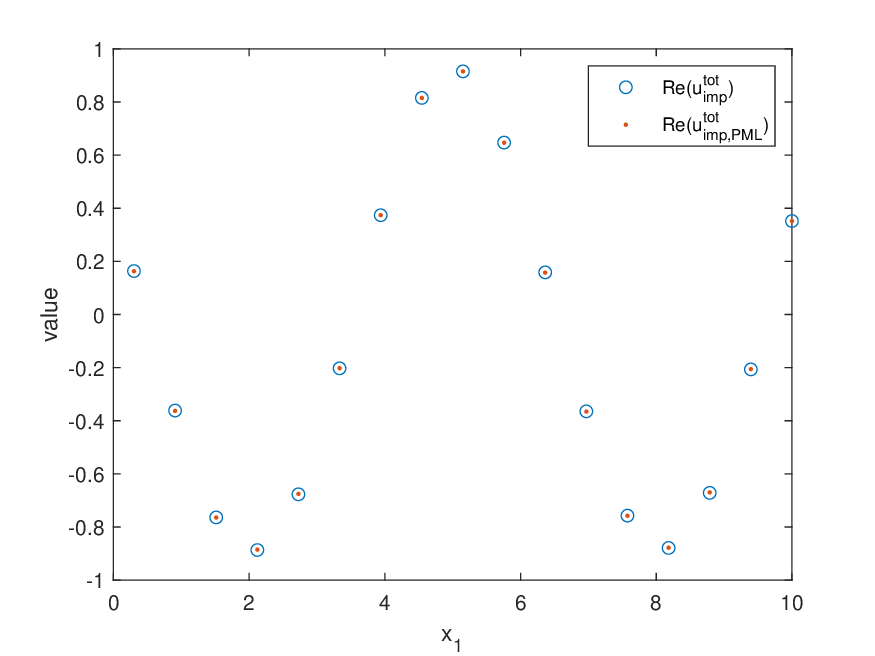}
      \end{minipage}}
\caption{The real parts of the approximate values of $u^{tot}_{dir}(x)$ and $u^{tot}_{imp}(x)$ in Example 3
by using our Nystr\"{o}m method with $N=64$ as well as the real parts of the approximate values of $u^{tot}_{dir,PML}(x)$ and $u^{tot}_{imp,PML}(x)$ in Example 3 by using the finite element method with the PML technique, where we choose $x$ to be
some discrete points on the segment line $\{(x_1,0.3):0\leq x_1\leq 10\}$. Here, the wave numbers $k_{+}=4$ and $k_{-}=3$.}
  \label{fig3}
  \end{figure}

\textbf{Example 4.} Consider the rough surface $\Gamma$ (see Figure \ref{fig2} (d)) with \begin{equation*}
      f(t)=-1+0.5\sin(0.35\pi t)\exp(-0.005t^2).
\end{equation*}
We choose the wave numbers $k_{+}=3.5$, $k_{-}=2.7$ and choose $S$ to be the set of $100$ points uniformly distributed on the line segment $\{(x_1,0.5)\,:\,|x_1|\leq 30\}$.
Let $d:=(\cos(\theta_d),\sin(\theta_d))$ with $\theta_d= 5\pi/12$ be the incident direction. Similarly to the reference wave defined in Section \ref{section2}, we introduce the reference wave $v_{l}$ that is generated by the incident plane wave $e^{ik_{-}x\cdot d}$ propagating in the lower half space $\mathbb{R}^2_{-}$ and that satisfies the Helmholtz equations as well as the transmission boundary condition in \eqref{hw-eq16}. Similarly to the Fresnel formulas given in \eqref{hw-eq17}, $v_l$ is given by
\begin{equation}
    v_{l}(x) := \begin{cases} \tilde{\mathcal{T}}(\pi-\theta_d)e^{ik_{+}x\cdot d^{t}},\quad  x\in \mathbb{R}^2_+, \\e^{ik_{-}x\cdot d}+\tilde{\mathcal{R}}(\pi-\theta_d)e^{ik_{-}x\cdot d^{r}}, \quad x\in \mathbb{R}^2_-,
    \end{cases}\label{eq173}
\end{equation}
where $d^r:=(\cos(\theta_d),-\sin(\theta_d))$ and $d^t:=(\cos(\theta_{d}^{t}),\sin(\theta_{d}^{t}))$ with $\theta_d^t\in (0,\pi)$ satisfying $\cos(\theta_d^t)=n\cos(\theta_d)$ and where the coefficient functions $\tilde{\mathcal{T}}$ and $\tilde{\mathcal{R}}$ are defined by
 \begin{equation*}
    \tilde{\mathcal{R}}(\theta):=\frac{i \sin \theta+\mathcal{S}(\cos \theta, 1/n)}{i \sin \theta-\mathcal{S}(\cos \theta, 1/n)}, \quad \tilde{\mathcal{T}}(\theta):=\tilde{\mathcal{R}}(\theta)+1 \quad \text{for } \theta \in \mathbb{R}.
 \end{equation*}
In the first case, we consider the problem (DBVP). Let $u^{s}_{dir}$ be the solution of the problem (DBVP) with the boundary data $g=v_{l}|_{\Gamma}$. Then it is easily verified that $u^{s}_{dir}=v_{l}$ in $D$ and thus the exact values of $u^{s}_{dir}$ can be computed by \eqref{eq173}.
The second and third columns of Table \ref{t4} (a) present the relative errors $E_{rel}(u^{s}_{dir})$ of our method for $T=20\pi$ and $T=40\pi$, respectively, with $N=8,16,32,64$.
In the second case, we consider the problem (IBVP). Let $u^{s}_{imp}$ be the solution of the problem (IBVP) with the boundary data $g=(\partial v_{l}/\partial \nu-ik_{-}\beta v_{l})|_{\Gamma}$, where we choose  $\beta \equiv 0.5-0.5i$ on $\Gamma$. Then it is also easily verified that $u^{s}_{imp}=v_{l}$ in $D$ and thus the exact values of $u^{s}_{imp}$ can be obtained as in the first case.
The second and third columns of Table \ref{t4} (b) present the relative errors $E_{rel}(u^{s}_{imp})$ of our method for $T=20\pi$ and $T=40\pi$, respectively, with $N=8,16,32,64$.
From Table \ref{t4}, it can be observed that the relative errors for the case $T=40\pi$ are smaller  than those for the case $T=20\pi$.

\begin{table}
    \centering
    \begin{minipage}[t]{0.45\textwidth}
    \begin{tabular}{|lr|l|l|}
    \hline
    & &{$T=20\pi$} & $T=40\pi$ \\ \hline
& N &  {$E_{rel}(u^{s}_{dir})$}   &    $E_{rel}(u^{s}_{dir})$   \\
$k_{+}=3.5$ & 8 &  {0.0013} &   4.4373e-04
\\
$k_{-}=2.7$ & 16 &  {8.2514e-04} &   2.2806e-04
 \\
 & 32 &  {5.5794e-04} &  1.5310e-04
 \\
 & 64 & {4.0977e-04} &  1.1097e-04
  \\ \hline
    \end{tabular}
    \caption*{(a) The problem (DBVP)}
    \end{minipage}
    \hfill
    \begin{minipage}[t]{0.45\textwidth}
    \begin{tabular}{|lr|l|l|}
    \hline
&  & {$T=20\pi$}    &   $T=40\pi$    \\ \hline
& N & {$E_{rel}(u^{s}_{imp})$} & $E_{rel}(u^{s}_{imp})$ \\
$k_{+}=3.5$ & 8 &  {0.0032} & 9.0459e-04
\\
$k_{-}=2.7$ & 16 &  {0.0020} &   5.4954e-04 \\
& 32 &  {0.0014} &  3.7795e-04 \\
 & 64 & {0.0011} &  2.9280e-04  \\ \hline
    \end{tabular}
    \caption*{(b) The problem (IBVP)}
    \end{minipage}
    \caption{Relative errors against $N$ for Example 4.}
    \label{t4}
\end{table}

\section{Conclusions}\label{section13}

In this paper, we investigated the problems of scattering of time-harmonic acoustic waves by a
two-layered medium with a rough boundary.
We have formulated the considered scattering problems as the boundary value problems and  proved the uniqueness and existence results of each boundary value problem by utilizing the integral
equation method associated with the two-layered Green function.
Moreover, we have developed the
Nystr\"{o}m method for numerically solving the considered boundary value problems and established the convergence results of our Nystr\"{o}m method.
It is worth noting that in establishing the well-posedness of the considered boundary value
problems as well as the convergence results of our Nystr\"{o}m method, an essential role has been played
by the investigation of the asymptotic properties of the two-layered Green function for small
and large arguments.
We should mention that the numerical results presented in Tables \ref{t1}, \ref{t2} and \ref{t4} do not fully support the convergence
results established in Theorems \ref{hthm1} and \ref{hthm2}.
We think the main reason is the presence of truncation errors in the numerical implementation of our Nystr\"{o}m method since in order to numerically compute the discretized equation \eqref{eq72} and \eqref{eq73},
we need to truncate the infinite interval $(-\infty,+\infty)$ into a finite interval $[-T,T]$.
Actually, it is shown in Tables \ref{t1}, \ref{t2} and \ref{t4} that for sufficiently large $N$, the relative errors of wave fields computed by our Nystr\"{o}m method for the case $T=40\pi$ are smaller than those for the case $T=20\pi$.
In the future, we hope to study the convergence rates for the numerical solutions of the truncated forms
of \eqref{eq72} and \eqref{eq73} as well as their dependence on the truncation parameter $T$.
Furthermore, it is interesting to study uniqueness and numerical algorithms of the inverse problems for the considered scattering models, which will be our future work.

\section*{Acknowledgments}

The work of H. Liu and J. Yang is partially supported by National Key R\&D Program of China (2022YFA1005102) and
the National Science Foundation of China (11961141007). The work of H. Zhang is partially supported by Beijing Natural
Science Foundation Z210001, the NNSF of China grant 12271515, and the Youth Innovation Promotion Association CAS. The work of B. Zhang is partially supported by the NNSF of China grant 12431016.

\begin{appendices}
\renewcommand{\theequation}{\Alph{section}.\arabic{equation}}

\section{Proofs of Lemma {\ref{thm34}} and Theorem \ref{thm28}}\label{section14}

\begin{proof}[Proof of Lemma {\ref{thm34}}] Let $x_0,y_0\in \mathbb{R}^2$ be arbitrarily fixed. Our proof is divided into three parts.

\textbf{Part 1}: we  prove that $R(x,y)$ is continuous at $(x_0,y_0)$. Denote the ball centered at $z\in \mathbb{R}^2$ with radius $r$ by $B_{r}(z)$. Choose the cutoff function $\chi\in C_{c}^{\infty}(\mathbb{R}^2)$ such that
\begin{equation*}
\chi(x) = \begin{cases}1,\quad x\in  B_{2\epsilon}(y_{0}),\\
0,\quad x\in \mathbb{R}^2\backslash \overline{B_{4\epsilon}(y_0)},
\end{cases}
\end{equation*}
with $\epsilon>0$ being a fixed number.
Let $P(x,y):=G(x,y)-\chi(x)G_{0}(x,y)$. Then we have
\begin{equation*}
\Delta_{x}P(x,y)+k^2(x)P(x,y)=-k^2(x)\chi(x) G_{0}(x,y)-\Delta_{x}((\chi(x)-1)G_{0}(x,y))=:f_0(x,y).
\end{equation*}
For any $y\in B_{\epsilon}(y_0)$ and $p\in(1,+\infty)$, we can easily verify that $\|f_{0}(\cdot,y)\|_{L^{p}(\mathbb{R}^2)}\leq C_{p,\epsilon}$ for some constant $C_{p,\epsilon}>0$.
Furthermore, for any $y_1,y_2\in B_{\epsilon}(y_0)$ and $p\in(1,+\infty)$,  we can easily prove that \begin{equation}
\|f_{0}(\cdot,y_1)-f_{0}(\cdot,y_2)\|_{L^{p}(\mathbb{R}^2)}\to 0 \text{ as } y_1\to y_2.\label{eq179}
\end{equation}
Let $K\subset\mathbb{R}^2$ be a bounded domain with $C^\infty$-boundary $\partial K$.
By the well-posedness of the scattering problem in a two-layered medium (see \cite{BHY2018}) and the interior regularity of the elliptic equation (see \cite[Sections 6.2 and 6.3]{E2010}), it follows that for $y\in B_{\epsilon}(y_0)$,
\begin{equation}
\|P(\cdot,y)\|_{H^2(K)}\leq C\|f_{0}(\cdot,y)\|_{L^2(\mathbb{R}^2)}\leq C_{2,\epsilon,K}\label{eq174}
\end{equation}
for some constants $C,C_{2,\epsilon,K}>0$,
which implies that
\begin{equation}
P(\cdot,y)\in C(\overline{K}),\quad\|P(\cdot,y)\|_{C(\overline{K})}\leq C_{2,\epsilon,K}.\label{eq177}
\end{equation}
Similarly to the above derivations, we can also obtain that for $y_1,y_2\in B_{\epsilon}(y_0)$,
\begin{equation}
\|P(\cdot,y_1)-P(\cdot,y_2)\|_{C(\overline{K})}\leq C\|P(\cdot,y_1)-P(\cdot,y_2)\|_{H^{2}(K)}\leq C\|f_{0}(\cdot,y_1)-f_{0}(\cdot,y_2)\|_{L^2(K)}.\label{eq178}
\end{equation}
Thus by using \eqref{eq177}, \eqref{eq178} and \eqref{eq179}, we can deduce that $P(x,y)$ is continuous at $(x_0,y_0)$. Hence $R(x,y)$ is continuous at $(x_0,y_0)$ due to the fact that $R(x,y)=(\chi(x) -1)G_0(x,y)+P(x,y)$.

\textbf{Part 2}: we prove that $\nabla_{x}R(x,y)$ is continuous at $(x_0,y_0)$. To do this, we utilize the $L^p$ estimates of the elliptic equation. Choose the cutoff function $\eta\in C_{c}^{\infty}(\mathbb{R}^2)$ such that
\begin{equation*}
\eta(x):=\begin{cases}1,\quad x\in B_{2\epsilon}(x_0),\\
0,\quad x\in\mathbb{R}^2\backslash \overline{B_{4\epsilon}(x_0)},
\end{cases}
\end{equation*}
with a fixed number $\epsilon>0$.
Let $P_{\eta}(x,y):=\eta(x)P(x,y)$, where $P(x,y)$ is given as in Part 1. Then we have
\begin{align*}
\Delta_{x}P_{\eta}(x,y)&=\Delta_{x} \eta(x) P(x,y)+2\nabla_{x} \eta(x) \cdot \nabla_{x} P(x,y)+\eta(x) \Delta_{x} P(x,y)\\
&=[(\Delta_{x} \eta(x)-k^2\eta(x)) P(x,y)+2\nabla_{x}\eta(x)\cdot \nabla_{x} P(x,y)]+\eta(x) f_{0}(x,y)\\
&=:f_1(x,y)+\eta(x) f_0(x,y)\\
&=:f_2(x,y).
\end{align*}
By the Gagliardo--Nirenberg--Sobolev inequality (see \cite[Theorem 1 in Section 5.6.1]{E2010}), it follows that
for any $p>2$ and any $y\in B_{\epsilon}(y_0)$,
\begin{align*}
\|f_1(\cdot,y)\|_{L^{p}(\mathbb{R}^2)}&\leq C\|\nabla_{x}f(\cdot,y)\|_{L^{p^*}(\mathbb{R}^2)}\\
&\leq C \|\nabla_{x}f(\cdot,y)\|_{L^{p^*}(B_{4\epsilon}(x_0))}\\
&\leq C \|f_1(\cdot,y)\|_{H^1(B_{4\epsilon}(x_0))}\\
& \leq C \|P(\cdot,y)\|_{H^2(B_{4\epsilon}(x_0))}
\end{align*}
for some constant $C>0$,
where $p^*$ satisfies $1/p=1/p^*-1/2$ (it is clear that $p^*\in (1,2)$).
This, together with \eqref{eq174}, implies that for any $p>2$ and any $y\in B_{\epsilon}(y_0)$,
\begin{equation}
\|f_1(\cdot,y)\|_{L^{p}(\mathbb{R}^2)}\leq C\|P(\cdot,y)\|_{H^{2}(B_{4\epsilon}(x_0))}\leq C\|f_{0}(\cdot,y)\|_{L^2(\mathbb{R}^2)}.\label{eq175}
\end{equation}
Similarly, it follows that for any $p>2$ and $y_1,y_2\in B_{\epsilon}(y_0)$,
\begin{equation}
\|f_1(\cdot,y_1)-f_1(\cdot,y_2)\|_{L^{p}(\mathbb{R}^2)}\leq C\|f_0(\cdot,y_1)-f_{0}(\cdot,y_2)\|_{L^{2}(\mathbb{R}^2)}.\label{eq176}
\end{equation}
It is easy to see that $P_{\eta}(\cdot,y)\in H^2(B_{4\epsilon}(x_0))$ with $y\in B_{\epsilon}(y_0)$ is the solution of the following problem
\begin{equation*}
\begin{cases}\Delta w(x)=f_2(x,y)\quad \textrm{in}~ B_{4\epsilon}(y_0),\\
w(x)=0\quad \textrm{on}~ \partial B_{4\epsilon}(x_0).
\end{cases}
\end{equation*}
Then it can be deduced from \cite[Theorem 9.15]{GT1983} that
\begin{equation*}
P_{\eta}(\cdot,y)\in W^{2,p}(B_{4\epsilon}(x_0))
\end{equation*}
for any $p\in (2,\infty)$ and $y\in B_{\epsilon}(y_0)$.
Furthermore, applying the Sobolev inequality given in \cite[Theorem 6 in Section 5.6.3]{E2010}, Lemma 9.17 in \cite{GT1983} and the inequality \eqref{eq175}, we obtain that for any $p>2$ and $y\in B_{\epsilon}(y_0)$,
$P_{\eta}(\cdot,y)\in C^{1}\big(\overline{B_{4\epsilon}(x_0)}\big)$ with
\begin{align*}
\|P_{\eta}(\cdot,y)\|_{C^{1}\big(\overline{B_{4\epsilon}(x_0)}\big)}&\leq C\|P_{\eta}(\cdot,y)\|_{W^{2,p}(B_{4\epsilon}(x_0))}\notag\\
&\leq C \|f_{0}(\cdot,y)\|_{L^{2}(\mathbb{R}^2)}+C\|f_{0}(\cdot,y)\|_{L^{p}(\mathbb{R}^2)}\notag\\
&\leq C\|f_0(\cdot,y)\|_{L^{p}(B_{4\epsilon(y_0)})}\leq C_{p,\epsilon}
\end{align*}
for some constants $C,C_{p,\epsilon}$.
Similarly, we can apply \cite[Lemma 9.17]{GT1983} and the inequality \eqref{eq176} to obtain that for any $p>2$ and $y_1,y_2\in B_{\epsilon}(y_0)$,
\begin{align}
\|P_{\eta}(\cdot,y_1)-P_{\eta}(\cdot,y_2)\|_{C^1(\overline{B_{4\epsilon}(x_0)})}&\leq C\|P_{\eta}(\cdot,y_1)-P_{\eta}(\cdot,y_2)\|_{W^{2,p}(B_{4\epsilon}(x_0))}\nonumber\\
&\leq C  \|f_{0}(\cdot,y_1)-f_{0}(\cdot,y_2)\|_{L^{p}(B_{4\epsilon}(x_0))}\label{eq181}
\end{align}
for some constant $C>0$.
Hence, by using the formulas \eqref{eq181} and  \eqref{eq179}
and the fact that $P_{\eta}(\cdot,y)\in C^{1}\big(\overline{B_{4\epsilon}(x_0)}\big)$, we have that $\nabla_{x}P_{\eta}(x,y)$ is continuous at $(x_0,y_0)$. This, together with the definitions of $P$ and $R$, implies that $\nabla_{x}R(x,y)$ is continuous at $(x_0,y_0)$.

\textbf{Part 3}: we prove that $\nabla_{y} R(x,y)$ is continuous at $(x_0,y_0)$.
It is known from \cite[(2.28)]{P2016} that $G(x,y)=G(y,x)$ for $x,y\in \mathbb{R}^2\backslash \Gamma_0$ with $x\neq y$. It was also proved in \cite[Remark 3.5]{LYZZ2022} that $G(\cdot,y)\in C^1(\mathbb{R}^2\backslash \{y\})$ for any $y\in \mathbb{R}^2$ and $G(x,\cdot)\in C^1(\mathbb{R}^2\backslash \{x\})$ for any $x\in \mathbb{R}^2$. Thus it follows that $G(x,y)=G(y,x)$ for any $x,y\in \mathbb{R}^2$ with $x\neq y$. This, together with the facts that $G_0(x,y)=G_0(y,x)$ for any $x,y\in\mathbb{R}^2$ with $x\neq y$ and $R(x,y)\in C(\mathbb{R}^2\times \mathbb{R}^2)$ (see the result in Part 1),
implies that $R(x,y)=R(y,x)$ for any $x,y\in \mathbb{R}^2$.
Hence, we can apply the result in Part 2 to obtain that $\nabla_{y} R(x,y)$ is continuous at $(x_0,y_0)$.

Therefore, the proof is complete due to the arbitrariness of $x_0,y_0$.
\end{proof}

We now prove Theorem \ref{thm28}. To do this, we need some notations and lemmas.
Define the angle $\theta_c:=\mathrm{arccos}(n)$ if $0<n<1$, where $n= k_-/k_+$ is given as in Section \ref{section2}.
For any $R_{0}>0$, define $B_{R_0}:=\{y\in \mathbb{R}^2\,:\,|y|<R_0\}$ and $B_{R_{0}}^{\pm}:=\{y\in \mathbb{R}_{\pm}^{2}:|y|<R_{0}\}$.
The following lemma gives the asymptotic properties of $G_{\mathcal{D},\kappa}$.

\begin{lemma}\label{thm41} Assume $\kappa>0$ and let $R_{0} > 0 $ be  an arbitrary fixed number. Suppose that $y= (y_1,y_2)$ and $y'=(y_1,-y_2)$ for $y \in \mathbb{R}^2$ and suppose that $ x = \hat{x}|x| = |x|(\cos \theta_{\hat{x}},\sin\theta_{\hat{x}})$ with $\theta_{\hat{x}}\in [0,2\pi)$ for $x\in \mathbb{R}^2$ with $|x|\neq 0$. Then we have the asymptotic behaviors
    \begin{equation*}
        \begin{aligned}
        &G_{\mathcal{D},\kappa}(x,y) = \frac{e^{i\kappa|x|}}{\sqrt{|x|}}\frac{e^{i\frac{\pi}{4}}}{\sqrt{8\pi \kappa}}\Big( e^{-i\kappa\hat{x}\cdot y}-e^{-i\kappa\hat{x}\cdot y'}\Big) +G_{\mathcal{D},\kappa,Res,1}(x,y),  \\
        &\nabla_{y} G_{\mathcal{D},\kappa}(x,y) = \frac{e^{i\kappa|x|}}{\sqrt{|x|}}e^{-i\frac{\pi}{4}}\sqrt{\frac{\kappa}{8\pi}}\bigg(e^{-i\kappa\hat{x}\cdot y}\bigg(\genfrac{}{}{0pt}{0}{\cos\theta_{\hat{x}}}{\sin\theta_{\hat{x}}}\bigg)-e^{-i\kappa\hat{x}y'}\bigg(\genfrac{}{}{0pt}{0}{\cos\theta_{\hat{x}}}{-\sin\theta_{\hat{x}}}\bigg) \bigg)+G_{\mathcal{D},\kappa,Res,2}(x,y),
        \end{aligned}
    \end{equation*}
    where $G_{\mathcal{D},\kappa,Res,1}$, $G_{\mathcal{D},\kappa,Res,2}$ satisfy
    \begin{equation*}
|G_{\mathcal{D},\kappa,Res,1}(x,y)|,~|G_{\mathcal{D},\kappa,Res,2}(x,y)|\leq C_{R_{0}}|x|^{-\frac{3}{2}},\quad |x|\to \infty,
    \end{equation*}
    uniformly for all $\theta_{\hat{x}}\in [0,2\pi)$ and $y\in B_{R_0}$. Here, the constant $C_{R_{0}}$ is independent of $x$ and $y$ but dependent of $R_{0}$.
\end{lemma}
\begin{proof} The statement of this lemma is a direct consequence of the following asymptotic behaviors of the Hankel function $H_{0}^{(1)}$ (see (3.105) in \cite{CK2019})
    \begin{equation*}
        \begin{aligned}
            &H_{0}^{(1)}(t) = \sqrt{\frac{2}{\pi t}}e^{i(t-\frac{\pi}{4})}\bigg\{1+\mathcal{O}(\frac{1}{t})\bigg\},\quad t\to \infty,\\
            &\frac{d}{dt}H_{0}^{(1)}(t) = \sqrt{\frac{2}{\pi t}}e^{i(t+\frac{\pi}{4})}\bigg\{1+\mathcal{O}(\frac{1}{t})\bigg\},\quad t\to \infty.
        \end{aligned}
    \end{equation*}
\end{proof}

The following lemma provides the uniform far-field asymptotics of some functions relevant to the two-layered Green function $G$, which are mainly based on the work \cite{LYZZ2022}.

\begin{lemma}
    \label{thm30}Assume $k_{+}>k_{-}>0$ and let $R_{0}>0$ be an arbitrary fixed number. Suppose that $y = (y_1,y_2)$ and $y'=(y_1,-y_2)$ for $y\in \mathbb{R}^2$ and suppose that $x=(x_1,x_2) = \hat{x}|x| = |x|(\cos\theta_{\hat{x}},\sin\theta_{\hat{x}})$ with $\theta_{\hat{x}}\in (0, \pi)\cup (\pi,2\pi)$ for $x\in \mathbb{R}^{2}_+\cup\mathbb{R}^{2}_-$.
    Define
    \begin{equation*}
        H(x,y) :=\begin{cases} G(x,y) - G_{\mathcal{D},k_{+}}(x,y), &x\in \mathbb{R}^2_{+},y \in \overline{\mathbb{R}^{2}_{+}},\\
                    G(x,y), &x\in \mathbb{R}^{2}_{+}, y\in \mathbb{R}^2_{-} ,
                \end{cases}
    \end{equation*}
    and
    \begin{equation*}
        I(x,y) := \begin{cases} G(x,y) - G_{\mathcal{D},k_{-}}(x,y), &x\in \mathbb{R}^2_{-},y\in \overline{\mathbb{R}_{-}^{2}},\\
            G(x,y), &x\in \mathbb{R}_{-}^2,y\in \mathbb{R}_{+}^{2}.
        \end{cases}
    \end{equation*}
    Then we have the following statements.

    {\rm (i)} For $\theta_{\hat{x}}\in (0,\pi)$, we have the asymptotic behaviors
    \begin{align*}
        H(x,y)=\frac{e^{i k_{+}|x|}}{\sqrt{|x|}} H_{1}^{\infty}(\hat{x},y)+H_{1,Res}(x, y),\\
        \nabla_{y} H(x,y)=\frac{e^{i k_{+}|x|}}{\sqrt{|x|}} H_{2}^{\infty}(\hat{x},y)+H_{2,Res}(x, y),
    \end{align*}
    where $H_{1}^{\infty}$ and $H_{2}^{\infty}$ are given by
    \begin{equation*}
        H_{1}^{\infty}(\hat{x},y)  :=\frac{e^{i\frac{\pi}{4}}}{\sqrt{8\pi k_{+}}}\begin{cases}\mathcal{T}(\theta_{\hat{x}})e^{-ik_{+}\hat{x}\cdot y'},& \hat{x}\in \mathbb{S}^{1}_{+},y\in \overline{\mathbb{R}_{+}^{2}}, \\
             \mathcal{T}(\theta_{\hat{x}})e^{-ik_{+}(y_1\cos\theta_{\hat{x}}+iy_2\mathcal{S}(\cos\theta_{\hat{x}},n))},& \hat{x}\in \mathbb{S}^{1}_{+},y\in \mathbb{R}_{-}^{2},
        \end{cases}
    \end{equation*}
    \begin{equation*}
    H_{2}^{\infty}(\hat{x},y):=e^{-i\frac{\pi}{4}}\sqrt{\frac{k_{+}}{8\pi}}
    \begin{cases}\mathcal{T}(\theta_{\hat{x}})e^{-ik_{+}\hat{x}\cdot y'}\begin{pmatrix}
\cos \theta_{\hat{x}} \\
-\sin \theta_{\hat{x}}
\end{pmatrix}^{T},& \hat{x}\in \mathbb{S}^{1}_{+},y\in \overline{\mathbb{R}_{+}^{2}},\\
\mathcal{T}(\theta_{\hat{x}})e^{-ik_{+}(y_1\cos \theta_{\hat{x}}+iy_2 \mathcal{S}(\cos \theta_{\hat{x}},n))} \begin{pmatrix}\cos \theta_{\hat{x}},\\
i\mathcal{S}(\cos \theta_{\hat{x}},n)
\end{pmatrix}^{T},& \hat{x}\in \mathbb{S}^{1}_{+},y\in \mathbb{R}_{-}^{2},
    \end{cases}
    \end{equation*}
    and where $H_{1,Res}$ and $H_{2,Res}$ satisfy the estimates
    \begin{equation*}
        |H_{1,Res}(x,y)|, ~|H_{2,Res}(x,y)|\leq C_{R_0} |x|^{-3/4},\quad |x|\to\infty,
    \end{equation*}
    uniformly for all $\theta_{\hat{x}}\in (0,\pi)$ and $y\in B_{R_0}$,
    \begin{equation*}
        |H_{1,Res}(x,y)|, ~|H_{2,Res}(x,y)|\leq C_{R_0}|\theta_{c}-\theta_{\hat{x}}|^{-\frac{3}{2}}|x|^{-\frac{3}{2}},\quad |x|\to \infty,
    \end{equation*}
    uniformly for all $\theta_{\hat{x}}\in (0,\theta_c)\cup (\theta_c,\pi/2)$ and $y\in B_{R_0}$, and
    \begin{equation*}
       |H_{1,Res}(x,y)|, ~|H_{2,Res}(x,y)|\leq C_{R_0}|\pi-\theta_c-\theta_{\hat{x}}|^{-\frac{3}{2}}|x|^{-\frac{3}{2}},\quad |x|\to \infty,
    \end{equation*}
    uniformly for all $\theta_{\hat{x}}\in [\pi/2,\pi-\theta_c)\cup (\pi-\theta_c,\pi)$ and $y\in B_{R_0}$. Here, the constant $C_{R_0}$ is independent of $x$ and $y$ but dependent of $R_{0}$.

{\rm (ii)} For $\theta_{\hat{x}}\in (\pi,2\pi)$, we have the asymptotic behaviors
\begin{align*}
    I(x,y)=\frac{e^{ik_{-}|x|}}{\sqrt{|x|}}I_{1}^{\infty}(\hat{x},y)+I_{1,Res}(x,y),\\
    \nabla_{y}I(x,y)=\frac{e^{ik_{-}|x|}}{\sqrt{|x|}}I_{2}^{\infty}(\hat{x},y)+I_{2,Res}(x,y),
\end{align*}
where $I_{1}^{\infty}$ and $I_{2}^{\infty}$ are given by
\begin{equation*}
    I_{1}^{\infty}(\hat{x},y) := \frac{e^{i\frac{\pi}{4}}}{\sqrt{8\pi k_{-}}}\begin{cases}\frac{2i\sin\theta_{\hat{x}}}{i\sin\theta_{\hat{x}}+\mathcal{S}(\cos\theta_{\hat{x}},1/n)}e^{-ik_{-}(y_1\cos \theta_{\hat{x}}-iy_2\mathcal{S}(\cos \theta_{\hat{x}},1/n))},& \hat{x}\in \mathbb{S}^{1}_{-}, y\in \mathbb{R}_{+}^{2},\\
\frac{2i\sin\theta_{\hat{x}}}{i\sin \theta_{\hat{x}}+\mathcal{S}(\cos\theta_{\hat{x}},1/n)}e^{-ik_{-}\hat{x}\cdot y'},& \hat{x}\in \mathbb{S}^{1}_{-}, y\in \overline{\mathbb{R}_{-}^{2}},
    \end{cases}
\end{equation*}
\begin{equation*}
I^{\infty}_{2}(\hat{x},y):=e^{-i\frac{\pi}{4}}\sqrt{\frac{k_{-}}{8\pi}}\left\{
\begin{array}{ll}
\frac{2i\sin\theta_{\hat{x}}}{i\sin \theta_{\hat{x}}+\mathcal{S}(\cos\theta_{\hat{x}},1/n)} e^{-ik_{-}(y_1 \cos \theta_{\hat{x}}-iy_2 \mathcal{S}(\cos \theta_{\hat{x}},1/n))}
&\begin{pmatrix}
\cos \theta_{\hat{x}}\\
-i\mathcal{S}(\cos \theta_{\hat{x}},1/n)
\end{pmatrix}, \\  &\hat{x}\in \mathbb{S}^{1}_{-}, y\in \mathbb{R}_{+}^{2}, \\ \frac{2i\sin\theta_{\hat{x}}}{i\sin \theta_{\hat{x}}+\mathcal{S}(\cos\theta_{\hat{x}},1/n)} e^{-ik_{-}\hat{x}\cdot y'} \begin{pmatrix}\cos \theta_{\hat{x}}\\
-\sin \theta_{\hat{x}}
\end{pmatrix}^{T},
\quad & \hat{x}\in \mathbb{S}^{1}_{-}, y\in \overline{\mathbb{R}_{-}^{2}},
\end{array}\right.
\end{equation*}
and where $I_{1,Res}$ and $I_{2,Res}$ satisfy the estimates
\begin{equation*}
    |I_{1,Res}(x,y)|,~|I_{2,Res}(x,y)|\leq C_{R_{0}}|x|^{-\frac{3}{2}},\quad |x|\to +\infty,
\end{equation*}
uniformly for all $\theta_{\hat{x}}\in (\pi,2\pi)$ and $y\in B_{R_0}$. Here, the constant $C_{R_0}$ is independent of $x$ and $y$ but dependent of $R_{0}$.
\end{lemma}

\begin{proof} The statement of this lemma is a direct consequence of Lemma \ref{thm41} and \cite[Theorems 2.14 and 3.2 and Remark 3.5]{LYZZ2022}.
\end{proof}

\begin{remark}\label{hw-re1}
By \eqref{eq63}, \eqref{eq156} and Lemma \ref{thm34}, $H$ and $I$ can be rewritten as follows:
\begin{equation*}
        H(x,y) =\begin{cases}
            \frac{1}{2\pi}\int_{-\infty}^{+\infty}\frac{e^{-ik_{+}(z y_1-i\mathcal{S}(z,1)y_2)}}{\mathcal{S}(z,1)+\mathcal{S}(z,n)}e^{ik_{+}(z x_1+i\mathcal{S}(z,1) x_2)}dz, & x\in \mathbb{R}^{2}_{+},y \in \overline{\mathbb{R}^{2}_{+}},\\
            \frac{1}{2 \pi} \int_{-\infty}^{\infty} \frac{e^{-ik_{+}(z y_1+i\mathcal{S}(z,n)y_2)}}{\mathcal{S}\left(z, 1\right)+\mathcal{S}\left(z, n\right)} e^{ik_{+}(zx_1+i\mathcal{S}(z,1)x_2)} d z,& x\in \mathbb{R}^{2}_{+}, y\in \mathbb{R}^2_{-},
        \end{cases}
\end{equation*}
and
\begin{equation*}
        I(x,y)=
        \begin{cases}
            \frac{1}{2 \pi} \int_{-\infty}^{\infty} \frac{e^{-ik_{+}(zy_1-i\mathcal{S}(z,n)y_2)}}{\mathcal{S}\left(z, 1\right)+\mathcal{S}\left(z, n\right)} e^{ik_{+}(zx_1-i\mathcal{S}(z,1)x_2)} d z,& x\in \mathbb{R}^{2}_{-}, y\in \mathbb{R}^2_{+}, \\
            \frac{1}{2\pi}\int_{-\infty}^{+\infty}\frac{e^{-ik_{+}(zy_1+i\mathcal{S}(z,n)y_2)}}{\mathcal{S}(z,1)+\mathcal{S}(z,n)}e^{ik_{+}(zx_1-i\mathcal{S}(z,n)x_2)}dz,& x\in \mathbb{R}_{-}^{2},y \in \overline{\mathbb{R}^{2}_{-}}.
        \end{cases}
\end{equation*}
\end{remark}

We are now ready to prove Theorem \ref{thm28}.

\begin{proof}[Proof of Theorem {\ref{thm28}}]
We only give the derivations on the estimates of $G_{\mathcal{P}}(x,y)$ and $G_{\mathcal{Q}}(x,y)$ by using the asymptotic behaviors of $H(x,y)$ and $I(x,y)$ given in Lemma \ref{thm30}
and the continuity of $R(x,y)$ given in  Lemma \ref{thm34}.
We omit the proof on the estimates of
$\nabla_y G_{\mathcal{P}}(x,y)$ and
$\nabla_y G_{\mathcal{Q}}(x,y)$, since these estimates can be similarly deduced by using the asymptotic behaviors of $\nabla_y H(x,y)$ and $\nabla_y I(x,y)$ given in Lemma \ref{thm30} as well as the continuity of $\nabla_y R(x,y)$ given in  Lemma \ref{thm34}.
Our proof is divided into the following three parts.

\textbf{Part 1:} we establish the estimates for $G_{\mathcal{P}}$ when $k_{+}>k_{-}$.
In this part, we consider three steps.

{\bf Step 1.1:} we prove that there exists some $\delta>0$ such that
\begin{equation}
    |G_{\mathcal{P}}(x,y)|\leq C(1+|x_2|+|y_2|)|x-y'|^{-\frac{3}{2}} \label{eq148}
\end{equation}
for all $x,y\in \mathbb{R}_{+}^{2}$ with $|x-y'|\geq \delta$, where $C$ is a constant depending only on $k_{\pm}$.

By taking the substitution $\xi = k_{+}z$ in (\ref{eq126}), $G_{\mathcal{P}}$ can be rewritten as
\begin{equation*}
G_{\mathcal{P}}(x,y)=\frac{1}{2\pi}\int_{-\infty}^{+\infty}\frac{1}{\mathcal{S}(z,1)+\mathcal{S}(z,n)}e^{ik_{+}\left(z(x_1-y_1)+i\mathcal{S}(z,1)(x_2+y_2)\right)}dz
\end{equation*}
for $x,y\in \mathbb{R}^2_{+}$.
This, together with Remark \ref{hw-re1}, implies that for $x, y\in \mathbb{R}_{+}^{2}$,
\begin{equation*}
G_{\mathcal{P}}(x,y) = H(x-y',(0,0)),
\end{equation*}
where $x-y'=(x_1-y_1,x_2+y_2)=|x-y'|(\cos(\theta_{\widehat{x-y'}}), \sin(\theta_{\widehat{x-y'}}))$ with $\theta_{\widehat{x-y'}}\in (0,\pi)$.
Then it follows from Lemma \ref{thm30} that
\begin{equation}
    G_{\mathcal{P}}(x,y) = \frac{e^{i k_{+}|x-y'|}}{\sqrt{|x-y'|}} \frac{e^{i\frac{\pi}{4}}}{\sqrt{8\pi k_{+}}}\frac{2i\sin \theta_{\widehat{x-y'}}}{i\sin \theta_{\widehat{x-y'}} -\mathcal{S}(\cos \theta_{\widehat{x-y'}}, n)}+G_{Res,a}(x,y)\label{eq135}
\end{equation}
for $x, y\in \mathbb{R}_{+}^{2}$, where $G_{Res,a}$ satisfies
\begin{equation}
    |G_{Res,a}(x,y)|\leq C |x-y'|^{-\frac{3}{4}},\quad |x-y'|\to\infty, \label{eq131}
\end{equation}
uniformly for all $\theta_{\widehat{x-y'}}\in (0,\pi)$,
\begin{equation}
    |G_{Res,a}(x,y)|\leq C|\theta_{c}-\theta_{\widehat{x-y'}}|^{-\frac{3}{2}}|x-y'|^{-\frac{3}{2}},\quad |x-y'|\to \infty,\label{eq132}
\end{equation}
uniformly for all $\theta_{\widehat{x-y'}}\in (0,\theta_c)\cup (\theta_c,\pi/2)$, and
\begin{equation}
    |G_{Res,a}(x,y)|\leq C |\pi-\theta_c-\theta_{\widehat{x-y'}}|^{-\frac{3}{2}}|x-y'|^{-\frac{3}{2}},\quad |x-y'|\to \infty,\label{eq133}
\end{equation}
uniformly for all $\theta_{\widehat{x-y'}}\in [\pi/2,\pi-\theta_c)\cup (\pi-\theta_c,\pi)$, where $C$ is a constant depending only on $k_{\pm}$.
If $\theta_{\widehat{x-y'}}\in (0,\theta_{c}/2)\cup (\pi-\theta_{c}/2,\pi)$, then we can apply (\ref{eq132}) and (\ref{eq133}) to obtain that there exists $\delta_1>0$ such that
\begin{equation}
    |G_{Res,a}(x,y)|\leq C\Big|\frac{\theta_c}{2}\Big|^{-\frac{3}{2}}|x-y'|^{-\frac{3}{2}}\quad \text{for } |x-y'|\geq \delta_1.\label{hw-eq2}
\end{equation}
Moreover, if
$\theta_{\widehat{x-y'}} \in [\theta_{c}/2, \pi - \theta_{c}/2]$,  then we can apply (\ref{eq131}) and the fact that $|x_1-y_1|\tan(\theta_{c}/2)\leq |x_2+y_2|$ to deduce that there exists $\delta_2>0$ such that
\begin{equation}
    |G_{Res,a}(x,y)|\leq C |x-y'|^{-\frac{3}{4}} \leq C'\frac{x_2+y_2}{|x-y'|^{\frac{3}{2}}} \quad \text{for }|x-y'|\geq \delta_2.\label{hw-eq3}
\end{equation}
Here, the constants $C$, $C'$ in \eqref{hw-eq2} and \eqref{hw-eq3} depend only on $k_{\pm}$.
Hence, combining the estimates \eqref{hw-eq2} and \eqref{hw-eq3}, we have that there exists $\delta:=\max(\delta_1,\delta_2)$ such that
\begin{equation}
|G_{Res,a}(x,y)|\leq C (1+x_2+y_2)|x-y'|^{-\frac{3}{2}}\label{eq136}
\end{equation}
for all $x,y\in \mathbb{R}^2_{+}$ with $|x-y'|\geq \delta$, where the constant $C$ depends only on $k_{\pm}$.

On the other hand, since $\sin \theta_{\widehat{x-y'}} =  (x_2+y_2)/|x-y'|$, it follows from (\ref{eq135}) that
\begin{equation}
    |G_{\mathcal{P}}(x,y)-G_{Res,a}(x,y)|\leq C(x_2+y_2)|x-y'|^{-\frac{3}{2}}\label{eq137}
\end{equation}
for all $x,y\in \mathbb{R}_{+}^{2}$, where $C$ is a constant depending only on $k_{\pm}$.

Utilizing (\ref{eq136}) and (\ref{eq137}), we have that \eqref{eq148}
holds for all $x,y\in \mathbb{R}_{+}^{2}$ with $|x-y'|\geq \delta$, where $\delta$ is given as above.

{\bf Step 1.2:} we prove that there exists some $\delta>0$ such that $G_{\mathcal{P}}$ satisfies (\ref{eq148}) for all $x,y\in \mathbb{R}^2_{-}$  with  $|x-y'|\geq \delta$, where the constant $C$ depends only on $k_{\pm}$.

For $x,y\in \mathbb{R}_{-}^{2}$, we can write $G_{\mathcal{P}}(x,y)$ as
\begin{equation*}
    G_{\mathcal{P}}(x,y) = \frac{1}{2\pi}\int_{-\infty}^{+\infty}\frac{e^{ik_{+}(z(x_1-y_1)-i\mathcal{S}(z,n)(x_2+y_2))}}{\mathcal{S}(z,1)+\mathcal{S}(z,n)}dz.
\end{equation*}
Then we obtain from Remark \ref{hw-re1} that
\[G_{\mathcal{P}}(x,y) = I(x-y',(0,0)).\]
Hence it follows from Lemma \ref{thm30} that
\begin{equation}
    G_{\mathcal{P}}(x,y)=\frac{e^{ik_{-}|x-y'|}}{\sqrt{|x-y'|}}\frac{e^{i\frac{\pi}{4}}}{\sqrt{8\pi k_{-}}}\frac{2i\sin\theta_{\widehat{x-y'}}}{i\sin\theta_{\widehat{x-y'}}+\mathcal{S}(\cos\theta_{\widehat{x-y'}},1/n)} + G_{Res,b}(x,y)\label{hw-eq4}
\end{equation}
for $x,y\in \mathbb{R}_{-}^{2}$, where $G_{Res,b}$ satisfies
\begin{equation*}
    |G_{Res,b}(x,y)| \leq C|x-y'|^{-\frac{3}{2}},\quad |x-y'|\to \infty,
\end{equation*}
uniformly for $\theta_{\widehat{x-y'}}\in (\pi,2\pi)$, where $C$ is a constant depending only on $k_{\pm}$.
This implies that
there exists $\delta>0$ such that
\begin{equation}\label{hw-eq5}
    |G_{Res,b}(x,y)|\leq C|x-y'|^{-\frac{3}{2}}
\end{equation}
for all $x,y\in \mathbb{R}_{-}^{2}$ with $|x-y'|\geq \delta $, where the constant $C$ depends only on $k_{\pm}$.

On the other hand, similarly to Step 1.1, it follows from \eqref{hw-eq4} that
\begin{equation}\label{hw-eq6}
    |G_{\mathcal{P}}(x,y)-G_{Res,b}(x,y)|\leq C(|x_2|+|y_2|)|x-y'|^{-\frac{3}{2}}
\end{equation}
for all $x,y\in \mathbb{R}^{2}_{-}$, where $C$ is a constant depending only on $k_{\pm}$.

By \eqref{hw-eq5} and \eqref{hw-eq6}, we have that $G_{\mathcal{P}}$ satisfies (\ref{eq148}) for all $x,y\in \mathbb{R}^2_{-}$  with  $|x-y'|\geq \delta$, where $\delta$ is given as above.

{\textbf{Step 1.3:}} we prove that for any $\delta_0>0$, there exists a constant $C>0$ depending on $\delta_0$ such that \eqref{eq148} holds for all $x,y$ satisfying $x_2\cdot y_2> 0$ and $|x-y'|\leq \delta_0$.

Recall that $G_{\mathcal{R}}(x,y)=-(i/4) H_{0}^{(1)}(k_{+}|x-y'|)+G_{\mathcal{P}}(x,y)$
   for $x,y\in \mathbb{R}^{2}_{+}$ and $G_{\mathcal{R}}(x,y)=-(i/4) H_{0}^{(1)}(k_{-}|x-y'|)+G_{\mathcal{P}}(x,y)$
   for $x,y\in \mathbb{R}^{2}_{-}$. Then from the equation \eqref{hw-eq9} and Remark \ref{thm35}, it follows that for some function $P$ defined in $\mathbb{R}^2_+\cup\mathbb{R}^2_-$, we can write $G_{\mathcal{R}}$ as $G_{\mathcal{R}}(x,y)=P(x-y')$ for $x,y\in \mathbb{R}^2_+$ and for $x,y\in \mathbb{R}^2_-$.  Using the continuity property of $G_{\mathcal{R}}$ given in Remark \ref{thm35}, it is clear that $P(z)$ can be extended as a function in $C(\overline{\mathbb{R}^2_{+}})\cup C(\overline{\mathbb{R}^2_{-}})$.
    Thus we have that for any $\delta_{0}>0$, there exists a constant $C$ depending only on $\delta_{0}$ and $k_{\pm}$ such that $|G_{\mathcal{R}}(x,y)|\leq C$ for all $x,y$ satisfying $x_2\cdot y_2 > 0$ and $|x-y'|\leq \delta_0$.
   This, together with the asymptotic properties of the Hankel function $H^{(1)}_0$ for small arguments (see \cite{CK2019} for the expression of $H^{(1)}_0$), implies that there exists a constant $C>0$ such that \eqref{eq148} holds for $x,y$ satisfying $x_2\cdot y_2> 0$ and $|x-y'|\leq \delta_0$.

From the discussions in Steps 1.1, 1.2 and 1.3, we obtain that $G_{\mathcal{P}}(x,y)$ satisfies (\ref{eq148}) for all $x,y$ satisfying $x_2\cdot y_2> 0$, where the constant $C$ depends only on $k_{\pm}$.

\textbf{Part II:} we establish the estimates for $G_{\mathcal{Q}}$ when $k_{+}>k_{-}$.
In this part, we consider three steps.

{\textbf{Step 2.1:}} we prove that there exists some $\delta>0$ such that
     \begin{equation}
        |G_{\mathcal{Q}}(x,y)|\leq C\frac{1+x_2}{|\widetilde{x-y}|^{\frac{3}{2}}} \label{eq149}
     \end{equation}
   for all $x,y$ satisfying $x_2>0$, $-h\leq y_2<0$ and $|\widetilde{x-y}| \geq \delta$, where $C$ is a constant depending only on $k_{\pm}$ and $h$.

Suppose that $x,y$ satisfy $x_2>0$ and $-h\leq y_2<0$.
    Let $\widetilde{x-y}=(x_1-y_1,x_2)=|\widetilde{x-y}|(\cos\theta_{\widetilde{x-y}},\sin\theta_{\widetilde{x-y}})$ with $\theta_{\widetilde{x-y}}\in(0,\pi)$ and $\tilde{y}:=(0,y_2)$.
    By the change of variable  $\xi=k_{+}z$,  $G_{\mathcal{Q}}$ can be written as
    \begin{equation*}
        G_{\mathcal{Q}}(x,y)=\frac{1}{2\pi}\int_{-\infty}^{+\infty}\frac{e^{-ik_{+}i\mathcal{S}(z,n)y_2}}{\mathcal{S}(z,1)+\mathcal{S}(z,n)}e^{ik_{+}(z(x_1-y_1)+i\mathcal{S}(z,1)x_2)}dz.
    \end{equation*}
    Then it follows from Remark \ref{hw-re1} that
    \begin{equation*}
        G_{\mathcal{Q}}(x,y) = H(\widetilde{x-y},\tilde{y}).
    \end{equation*}
    Hence using Lemma \ref{thm30}, we obtain that
    \begin{equation}
        G_{\mathcal{Q}}(x,y)=\frac{e^{ik_{+}|\widetilde{x-y}|}}{\sqrt{|\widetilde{x-y}|}}\frac{e^{i\frac{\pi}{4}}}{\sqrt{8\pi k_{+}}}\frac{2i\sin\theta_{\widetilde{x-y}}}{i\sin\theta_{\widetilde{x-y}}-\mathcal{S}(\cos\theta_{\widetilde{x-y}},n)}e^{-ik_{+}iy_2\mathcal{S}(\cos\theta_{\widetilde{x-y}},n)}+G_{Res,c}(x,y)\label{eq138}
    \end{equation}
for $x \in \mathbb{R}_{+}^{2}$, $y\in \mathbb{R}_{-}^{2}$, where $G_{Res,c}$ satisfies
    \begin{equation}
        |G_{Res,c}(x,y)|\leq C|\widetilde{x-y}|^{-\frac{3}{4}},\quad |\widetilde{x-y}|\to +\infty,\label{eq122}
    \end{equation}
     uniformly for all $\theta_{\widetilde{x-y}}\in (0,\pi)$ and $-h\leq y_2\leq 0$,
    \begin{equation}
        |G_{Res,c}(x,y)|\leq C|\theta_{c}-\theta_{\widetilde{x-y}}|^{-\frac{3}{2}}|\widetilde{x-y}|^{-\frac{3}{2}},\quad |\widetilde{x-y}|\to +\infty, \label{eq123}
    \end{equation}
     uniformly for all $\theta_{\widehat{x-y}}\in (0,\theta_{c})\cup (\theta_{c},\pi/2)$ and $-h\leq y_2\leq 0$, and
     \begin{equation}
        |G_{Res,c}(x,y)|\leq C |\pi - \theta_{c} - \theta_{\widetilde{x-y}}|^{-\frac{3}{2}}|\widetilde{x-y}|^{-\frac{3}{2}},\quad |\widetilde{x-y}|\to +\infty, \label{eq124}
     \end{equation}
     uniformly for all $\theta_{\widetilde{x-y}}\in [\pi/2,\pi-\theta_{c})\cup (\pi-\theta_{c},\pi)$ and $-h\leq y_2\leq 0$, where $C$ is a constant depending only on $k_{\pm}$ and $h$.

     If $\theta_{\widetilde{x-y}}\in (\theta_{c}/2, \pi-\theta_{c}/2)$, then we can apply (\ref{eq122}) and the fact that $|x_1-y_1|\tan(\theta_{c}/2)\leq |x_2|$ to obtain that there exists $\delta_1>0$ such that
\begin{equation}\label{hw-eq7}
        |G_{Res,c}(x,y)|\leq C|\widetilde{x-y}|^{-3/4}\leq  C' \frac{x_2}{|\widetilde{x-y}|^{\frac{3}{2}}}
     \end{equation}
     for $|\widetilde{x-y}|\geq \delta_1$.
Moreover, if $\theta_{\widetilde{x-y}}\in (0,\theta_{c}/2]\cup [\pi -\theta_c/2,\pi )$, then we can apply (\ref{eq123}) and (\ref{eq124}) to deduce that there exists $\delta_2>0$ such that
\begin{equation}\label{hw-eq8}
        |G_{Res,c}(x,y)|\leq C \Big|\frac{\theta_c}{2}\Big|^{-\frac{3}{2}}|\widetilde{x-y}|^{-\frac{3}{2}}\leq C'' |x-y|^{-\frac{3}{2}} \quad \text{for }|\widetilde{x-y}|\geq \delta_2.
     \end{equation}
     Here, the constants $C$, $C'$, $C''$ in \eqref{hw-eq7} and \eqref{hw-eq8} depend only on $k_{\pm}$ and $h$.
     Combining \eqref{hw-eq7} and \eqref{hw-eq8}, we have that there exists $\delta:=\max(\delta_1,\delta_2)$ such that
     \begin{equation}
        |G_{Res,c}(x,y)|\leq C\frac{1+x_2}{|x-y|^{\frac{3}{2}}}\label{eq140}
     \end{equation}
     for all $x,y$ satisfying $x_2>0$, $-h\leq y_2<0$ and $|\widetilde{x-y}|\geq \delta$, where $C$ is a constant depending only on $k_{\pm}$ and $h$.

     On the other hand, since $\sin\theta_{\widetilde{x-y}}=x_2/|\widetilde{x-y}|$ and $-h \leq y_2 <0$, we obtain from  (\ref{eq138}) that
     \begin{equation}
        |G_{\mathcal{Q}}(x,y)-G_{Res,c}(x,y)|\leq C \frac{x_2}{|\widetilde{x-y}|^{\frac{3}{2}}}\label{eq141}
     \end{equation}
     for all $x,y$ satisfying $x_2>0$ and $-h\leq y_2<0$, where $C$ is a constant depending only on $k_{\pm}$ and $h$.

     Hence, (\ref{eq140}) and (\ref{eq141}) give that \eqref{eq149} holds for all $x,y$ satisfying $x_2>0$, $-h\leq y_2<0$ and $|\widetilde{x-y}| \geq \delta$, where $C$ is a constant depending only on $k_{\pm}$ and $h$.

{\bf Step 2.2:} we prove that there exists some $\delta>0$ such that \eqref{eq149} holds for all $x,y$ satisfying $x_2< 0$, $0< y_2\leq h$ and $|\widetilde{x-y}|\geq \delta$, where $C$ is a constant depending only on $k_{\pm}$ and $h$.

Suppose $x,y$ satisfy $x_2< 0$ and $0< y_2\leq h$.
By \eqref{hw-eq11} we can write $G_{\mathcal{Q}}$ as
    \begin{equation*}
        G_{\mathcal{Q}}(x,y) = \frac{1}{2\pi}\int_{-\infty}^{+\infty}\frac{e^{ik_{+}y_2 i\mathcal{S}(z,n)}}{\mathcal{S}(z,1)+\mathcal{S}(z,n)}e^{ik_{+}(z(x_1-y_1)-i\mathcal{S}(z,n)x_2)}dz.
    \end{equation*}
    This, together with Remark \ref{hw-re1}, implies that
    \begin{equation*}
        G_{\mathcal{Q}}(x,y) = I(\widetilde{x-y},\tilde{y}),
    \end{equation*}
    where $\tilde{y}=(0,y_2)$ and $\widetilde{x-y}=(x_1-y_1,x_2)$.
   Then it follows from Lemma \ref{thm30} that for $x\in \mathbb{R}^2_{-}$ and $y\in \mathbb{R}^2_{+}$,
   \begin{equation}\label{hw-eq10}
    G_{\mathcal{Q}}(x,y)  =  \frac{e^{ik_{+}|\widetilde{x-y}|}}{\sqrt{|\widetilde{x-y}|}}\frac{e^{i\frac{\pi}{4}}}{\sqrt{8\pi k_{-} }}\frac{2i\sin\theta_{\widetilde{x-y}}}{i\sin \theta_{\widetilde{x-y}}+\mathcal{S}(\cos\theta_{\widetilde{x-y}},1/n)}e^{-k_{-}y_2\mathcal{S}(\cos \theta_{\hat{x}},1/n)}+G_{Res,d}(x,y),
   \end{equation}
   where $G_{Res,d}$ satisfies
   \begin{equation*}
    |G_{Res,d}(x,y)|\leq C|\widetilde{x-y}|^{-\frac{3}{2}},\quad |\widetilde{x-y}|\to +\infty,
   \end{equation*}
   uniformly for all $\theta_{\widetilde{x-y}}\in (\pi,2\pi)$ and $\tilde{y}$ with  $0 < y_2\leq h$ and where $C$ is a constant depending only on $k_{\pm}$ and $h$.
   Thus there exists $\delta>0$ such that
   \begin{equation}
    |G_{Res,d}(x,y)|\leq C|\widetilde{x-y}|^{-\frac{3}{2}}\label{eq146}
   \end{equation}
   for $|\widetilde{x-y}|\geq \delta$, where $C$ is a constant depending only on $k_{\pm}$ and $h$.

On the other hand, similarly to Step 2.1, by \eqref{hw-eq10} we have
    \begin{equation}
        |G_{\mathcal{Q}}(x,y)-G_{Res,d}(x,y)| \leq C\frac{|x_2|}{|\widetilde{x-y}|^{\frac{3}{2}}}\label{eq147}
    \end{equation}
    for $x,y$ satisfying $x_2< 0$ and $0< y_2\leq h$, where $C$ is a constant depending only on $k_{\pm}$ and $h$.

    Hence, it follows from (\ref{eq146}) and (\ref{eq147}) that $G_{\mathcal{Q}}(x,y)$ satisfies (\ref{eq149})
    for all $x,y$ satisfying $x_2< 0$, $0< y_2\leq h$ and $|\widetilde{x-y|}\geq \delta$, where the constant $C$ depends only on $k_{\pm}$ and $h$.

{\textbf{Step 2.3:}} we show that for any $\delta_0,h>0$, there exists a constant $C>0$ depending on $\delta_0$ and $h$ such that \eqref{eq149} holds for $x,y$ satisfying $x_2\cdot y_2<0$, $|y_2|\leq h$ and $|\widetilde{x-y}|\leq \delta_0$.

Recall that $G_{\mathcal{Q}}(x,y) =  (i/4)H_{0}^{(1)}(k_{+}|x-y|)+G_{\mathcal{S}}(x,y)$ for $x\in \mathbb{R}^{2}_{-},y\in \mathbb{R}^{2}_{+}$ and for $x\in  \mathbb{R}^{2}_{+},y\in \mathbb{R}^{2}_{-}$.
By \eqref{hw-eq13}, we can write $G_{\mathcal{S}}$ as $G_{\mathcal{S}}(x,y)=Q(\widetilde{x-y},y_2)$, where $Q(\cdot,\cdot)$ is a function defined on ${\mathbb{R}^2_{+}}\times {\mathbb{R}_{-}}$ and $ {\mathbb{R}^2_{-}}\times {\mathbb{R}_{+}} $ with $\mathbb{R}_{\pm}:=\{x\in \mathbb{R}\,:\,x\gtrless 0\}$.
Using the continuity property of $G_{\mathcal{S}}$ given in Remark \ref{thm35}, we obtain that $Q(\cdot,\cdot)$ can be extended as a function in $ C(\overline{\mathbb{R}^2_{+}}\times \overline{\mathbb{R}_{-}})\cup C(\overline{\mathbb{R}^2_{-}}\times \overline{\mathbb{R}_{+}})$.
Thus we have that for any $\delta_{0}>0$, there exists a constant $C$ depending only on $\delta_{0},h,k_{\pm}$ such that $|G_{\mathcal{S}}(x,y)|\leq C$ for $x,y$ satisfying $x_2\cdot y_2<0$, $|y_2|\leq h$ and $|\widetilde{x-y}|\leq \delta_{0}$.
This, together with the asymptotic properties of the Hankel function $H^{(1)}_0$ for small arguments, implies that there exists a constant $C>0$ such that \eqref{eq149} holds for $x,y$ satisfying $x_2\cdot y_2<0$, $|y_2|\leq h$ and $|\widetilde{x-y}|\leq \delta_0$.

Based on the analysis in Steps 2.1, 2.2 and 2.3, we obtain that $G_{\mathcal{Q}}(x,y)$ satisfies (\ref{eq149}) for all $x,y$ satisfying $x_2\cdot y_2 < 0$ and $|y_2|\leq h$, where the constant $C$ depends only on $k_{\pm}$ and $h$.

\textbf{Part III:} we establish the estimates for $G_\mathcal{P}$ and $G_\mathcal{Q}$ when $k_{+}<k_{-}$.

Define  \begin{equation*}
        G^{*}(x,y):= \begin{cases}
           G_{\mathcal{D},k_{-}}(x,y)+G^{*}_{\mathcal{P}}(x,y),\quad x,y\in \mathbb{R}_{+}^{2}, \\
           G^{*}_{\mathcal{Q}}(x,y),\quad x\in \mathbb{R}^{2}_{-},y\in \mathbb{R}^{2}_{+} \text{ or }x\in \mathbb{R}^{2}_{+},y\in \mathbb{R}^{2}_{-}, \\
           G_{\mathcal{D},k_{+}}(x,y)+G^{*}_{\mathcal{P}}(x,y),\quad x,y\in \mathbb{R}_{-}^{2},
        \end{cases}
    \end{equation*}
where $G_{\mathcal{P}}^{*}(x,y):=G_{\mathcal{P}}(x',y')$ and $G_{\mathcal{Q}}^{*}(x,y):=G_{\mathcal{Q}}(x',y')$.
Then by \eqref{hw-eq12}, it can be seen that for any $y\in\mathbb{R}^2_+\cup\mathbb{R}^2_-$, $G^{*}(x,y)$ is the two-layered Green function satisfying the scattering problem \eqref{eq166}--\eqref{eq22} with $k=k_{-}$ for $x\in \mathbb{R}^2_{+}$ and $k=k_{+}$ for $x\in \mathbb{R}^2_{-}$.
    Thus, by using the same analysis as in Parts I and II, we can directly obtain that
    \begin{equation}
        |G_{\mathcal{P}}(x',y')|= |G_{\mathcal{P}}^{*}(x,y)| \leq  C(1+|x_2|+|y_2|)|x-y'|^{-\frac{3}{2}}\label{eq182}
    \end{equation}
    for all $x,y$ satisfying $x_2\cdot y_2> 0$ and that
    \begin{equation}
        |G_\mathcal{Q}(x',y')|=|G_{\mathcal{Q}}^{*}(x,y)| \leq C (1+|x_2|)|\widetilde{x-y}|^{-\frac{3}{2}} \label{eq183}
    \end{equation}
    for all $x,y$ satisfying $x_2\cdot y_2< 0$ and $|y_2|\leq h$.
    Hence it follows from \eqref{eq182} that $G_{\mathcal{P}}(x,y)$ satisfies (\ref{eq148}) for all $x,y$ satisfying $x_2\cdot y_2> 0$, where the constant $C$ depends only on $k_{\pm}$. Moreover, it can be seen from \eqref{eq183} that $G_{\mathcal{Q}}(x,y)$ satisfies (\ref{eq149}) for all $x,y$ satisfying $x_2\cdot y_2 < 0$ and $|y_2|\leq h$, where the constant $C$ depends only on $k_{\pm},h$.

    Therefore, the proof is complete.
\end{proof}

\section{Potential Theory}\label{section7}

In this section, we give the properties of the single- and double-layer potentials associated with the two-layered Green function.
Similar properties for the layer potentials associated with the half-space Dirichlet Green function $G_{\mathcal{D},\kappa}$ with $\kappa>0$ have been  established in \cite[Appendix A]{ZC2003}. See also \cite[Appendix A]{CRZ1999} and \cite[Lemmas 4.1--4.3]{CR1996} for the properties of the layer potentials associated with the half-space impedance Green function.
We note that Theorems \ref{thm7}--\ref{thm12} below can be deduced in a very similar way as in \cite[Appendix A]{ZC2003}, due to the definition of the two-layered Green function (see \eqref{eq166}--\eqref{eq22}), the facts that
$G(x,y)-G_{\mathcal{D},k_{-}}(x,y)\in C^{\infty}(\mathbb{R}^2_{-}\times\mathbb{R}^{2}_{-})$ (see \eqref{hw-eq12} and \eqref{eq126}) and $G(x,y)\in C^{\infty}(\mathbb{R}^{2}_{+}\times\mathbb{R}^{2}_{-})$ (see \eqref{eq63} and \eqref{hw-eq11}) as well as Lemma \ref{thm34} and Theorem \ref{thm38}.
Thus, in what follows, we only present Theorems \ref{thm7} and \ref{thm27} with some necessary explanations in the proofs and only present Theorems \ref{thm10}--\ref{thm12} without proofs.
Throughout this section, we assume that $f$ belongs to $B(c_1,c_2)$ with $c_1<0$ and $c_2>0$ and let $\nu$ denote the unit normal on $\Gamma$ pointing to the exterior of $D$.

\begin{theorem}
    Let $W$ be the double-layer potential with the density $\psi\in BC(\Gamma)$, that is,
    \begin{equation}\label{hw-eq14}
        W(x):=\int_{\Gamma}\frac{\partial G(x,y)}{\partial \nu(y)}\psi(y)ds(y), \quad x\in \mathbb{R}^2\backslash \Gamma.
    \end{equation}
    Then the following results hold.

    {\rm (\romannumeral1)} $W$ satisfies $W \in C^2(\mathbb{R}^2\backslash (\Gamma_{0}\cup \Gamma))$, $W|_{\overline{U_{0}}}\in C^{1}(\overline{U_{0}})$,  $W|_{D\backslash U_{0}}\in C^{1}(D\backslash U_{0})$, and  satisfies the Helmholtz equations together with the transmission boundary condition on $\Gamma_{0}$, i.e.,
    \begin{equation*}
    \begin{cases}
    \Delta W+k_{+}^2W=0 \textrm{ in }  U_{0}, \\
    \Delta W+k_{-}^2W=0 \textrm{ in }  \mathbb{R}^2\backslash (\overline{U}_{0}\cap \Gamma), \\
     W|_{+}=W|_{-}, \partial_{2}W\left|_{+} = \partial_{2}W\right|_{-} \textrm{ on } \Gamma_{0}.
    \end{cases}
    \end{equation*}

    {\rm (\romannumeral2)} $W$ can be continuously extended from $D$ to $\overline{D}$ and from $\mathbb{R}^2\backslash \overline{D}$ to $\mathbb{R}^2\backslash D$ with the limiting values
    \begin{equation*}
        W_{\pm}(x)=\int_{\Gamma}\frac{\partial G(x,y)}{\partial \nu(y)}\psi(y)ds(y) \mp \frac{1}{2}\psi(x), \quad x\in \Gamma,
    \end{equation*}
where
\begin{equation}
    W_{\pm}(x) := \lim_{h \to 0+}W(x\mp h\nu(x)), \quad x\in \Gamma.
    \label{eq21}
\end{equation}
    The integral exists in the sense of improper integral.

    {\rm (\romannumeral3)} There exists some constant $C>0$ such that for all $f\in B(c_1,c_2)$ and $\psi\in BC(\Gamma)$,
    \begin{equation*} \sup_{x\in\mathbb{R}^2\backslash \Gamma}\left|(|x_{2}|+1)^{-\frac{1}{2}}W(x)\right|\leq C\|\psi\|_{\infty,\Gamma}.
    \end{equation*}

    {\rm (\romannumeral4)} There holds
        \begin{equation*}
            (\nabla W(x+h\nu(x))-\nabla W(x-h\nu(x)))\cdot \nu(x)\to 0
        \end{equation*}
        as $h\to0$, uniformly for $x$ in compact subsets of $\Gamma$.

    {\rm (\romannumeral5)} $W$ satisfies the upward propagating radiation condition \eqref{eq1} with the wave number $k_{+}$ in $U_{0}$ and the downward propagating radiation condition with the wave number $k_{-}$ in $\mathbb{R}^2\backslash \overline{U}_{f_{-}}$, that is, there exists some $h<f_{-}$ and $\phi\in L^{\infty}(\Gamma_{h})$ such that
    \begin{equation*}
        W(x) = -2\int_{\Gamma_{h}}\frac{\partial \Phi_{k_{-}}(x,y)}{\partial y_2}\phi(y)ds(y),\quad x\in \mathbb{R}^2\backslash \overline{U}_{h}.
    \end{equation*}
    \label{thm7}
\end{theorem}

\begin{proof}
We only prove $W|_{\overline{U_{0}}}\in C^{1}(\overline{U_{0}})$ and  $W|_{D\backslash U_{0}}\in C^{1}(D\backslash U_{0})$, since the other results in this theorem
can be deduced in a very similar way as in \cite[Appendix A]{ZC2003}.
In fact,
for any $x_{0}\in D$, it can be deduced from \eqref{eq189} that
 \begin{equation*}
\nabla W(x_{0})=\int_{\Gamma}\nabla_{x}\frac{\partial G(x_{0},y)}{\partial \nu(y)}\psi(y)ds(y).
 \end{equation*}
 Using \eqref{eq189}, the Lebesgue's dominated convergence theorem as well as the continuity properties of $G$ in Lemma \ref{thm36} {\rm (i)}, we have that for $x_0\in \overline{U_{0}}$,
 \begin{align*}
 \lim_{\substack{x\to x_{0}\\ x\in \overline{U_{0}}}}\nabla W(x)&=\lim_{\substack{x\to x_{0}\\ x\in \overline{U_{0}}}}\int_{\Gamma}\nabla_{x}\frac{\partial G(x,y)}{\partial \nu(y)}\psi(y)ds(y)\\
 &=\int_{\Gamma}\lim_{\substack{x\to x_{0}\\ x\in \overline{U_{0}}}}\nabla_{x}\frac{\partial G(x,y)}{\partial \nu (y)}\psi(y)ds(y)\\
&=\int_{\Gamma}\nabla_{x}\frac{\partial G(x_{0},y)}{\partial \nu(y)}\psi(y)ds(y)\\
&=\nabla W(x_0).
 \end{align*}
 This means that $\nabla W|_{\overline{U_{0}}}\in C(\overline{U_{0}})$. Similarly, we have that
$\nabla W|_{D\backslash U_{0}}\in C(D\backslash U_{0})$.
By similar arguments, we can use Lemma \ref{thm36} {\rm (i)} and Theorem \ref{thm38} to obtain that $W|_{\overline{U_{0}}}\in C(\overline{U_{0}})$ and $W|_{D\backslash U_{0}}\in C(D\backslash U_{0})$. Thus we obtain that $W|_{\overline{U_{0}}}\in C^{1}(\overline{U_{0}})$ and  $W|_{D\backslash U_{0}}\in C^{1}(D\backslash U_{0})$.
\end{proof}

\begin{theorem}
    Let $V$ be the single-layer potential with the density $\psi\in BC(\Gamma)$, that is,
    \begin{equation}\label{hw-eq15}
        V(x):=\int_{\Gamma}G(x,y)\psi(y)ds(y),\quad  x\in \mathbb{R}^2\backslash \Gamma.
    \end{equation}
    Then the following results hold.

    {\rm (\romannumeral1)} $V$ satisfies $V \in C^2(\mathbb{R}^2\backslash (\Gamma_{0}\cup \Gamma))$, $W|_{\overline{U_{0}}}\in C^{1}(\overline{U_{0}})$,  $W|_{D\backslash U_{0}}\in C^{1}(D\backslash U_{0})$ and  satisfies the Helmholtz equations together with the transmission boundary conditions on $\Gamma_{0}$, i.e.,
    \begin{equation*}
    \begin{cases}
    \Delta V+k_{+}^2V=0 \textrm{ in }  U_{0}, \\
    \Delta V+k_{-}^2V=0 \textrm{ in }  \mathbb{R}\backslash (\overline{U}_{0}\cup \Gamma), \\
     V|_{+}=V|_{-}, \partial_{2}V\left|_{+} = \partial_{2}V\right|_{-} \textrm{ on } \Gamma_{0}.
    \end{cases}
    \end{equation*}

    {\rm (\romannumeral2)} $V$ is continuous in $\mathbb{R}^2$ and
    \begin{align}
        V(x)&=\int_{\Gamma}G(x,y)\psi(y)ds(y),\quad x \in \Gamma, \label{eq24}\\
    \frac{\partial V_{\pm}}{\partial \nu}(x)&=\int_{\Gamma}\frac{\partial G(x,y)}{\partial \nu(x)}\psi(y)ds(y) \pm \frac{1}{2}\psi(y),\quad x \in \Gamma,\label{eq23}
    \end{align}
    where
    \begin{equation}
    \frac{\partial V_{\pm}}{\partial \nu}(x):=\lim_{h \to 0+}\nu(x) \cdot \nabla V(x \mp h\nu(x))
    \label{eq20}
    \end{equation}
    and the convergence in \eqref{eq20} is uniform on compact subsets of $\Gamma$. The integrals in (\ref{eq24}) and (\ref{eq23}) exist as improper integrals.

    {\rm (\romannumeral3)} There exists some constant $C>0$ such that for all $f\in B(c_1,c_2)$ and $\psi\in BC(\Gamma)$,
    \begin{equation*} \sup_{x\in\mathbb{R}^2\backslash \Gamma}\left|(|x_{2}|+1)^{-\frac{1}{2}}V(x)\right|< C\|\psi\|_{\infty,\Gamma}.
    \end{equation*}

    {\rm (\romannumeral4)} $V$ satisfies the upward propagating radiation condition \eqref{eq1} with the wave number $k_{+}$ in $U_{0}$ and the downward propagating radiation condition with the wave number $k_{-}$ in $\mathbb{R}^2\backslash \overline{U}_{f_{-}}$, that is, there exists some $h<f_{-}$ and $\phi\in L^{\infty}(\Gamma_{h})$ such that
    \begin{equation*}
        V(x) = -2\int_{\Gamma_{h}}\frac{\partial \Phi_{k_{-}}(x,y)}{\partial y_2}\phi(y)ds(y),\quad x\in \mathbb{R}^2\backslash \overline{U}_{h}.
    \end{equation*}

    \label{thm27}
\end{theorem}

\begin{proof}
Similarly to the proof of Theorem \ref{thm7}, we can use Lemma \ref{thm36} to deduce that $W|_{\overline{U_{0}}}\in C^{1}(\overline{U_{0}})$ and  $W|_{D\backslash U_{0}}\in C^{1}(D\backslash U_{0})$.
The other results in this theorem
can be deduced in a very similar way as in \cite[Appendix A]{ZC2003}.
\end{proof}

\begin{theorem}
    Let $\psi\in BC(\Gamma)$. The direct value of the double-layer potential is defined by
    \begin{equation*}
        W(x):=\int_{\Gamma}\frac{\partial G(x,y)}{\partial \nu(y)}\psi(y)ds(y),\quad x\in \Gamma,
    \end{equation*}
    and the direct value of the single-layer potential is defined by
    \begin{equation*}
  V(x):=\int_{\Gamma}G(x,y)\psi(y)ds(y),\quad x\in \Gamma.
    \end{equation*}
    Then, for any $\lambda\in (0,1)$, both $W(x)$ and $V(x)$ represent uniformly H\"{o}lder continuous functions on $\Gamma$ with the norms
    \begin{equation*}
         \|W\|_{C^{0,\lambda}(\Gamma)},~\|V\|_{C^{0,\lambda}(\Gamma)}\leq C\|\psi\|_{\infty,\Gamma}
    \end{equation*}
    for some constant $C>0$ depending only on $B(c_1,c_2)$ and $k_{\pm}$.
\label{thm10}
\end{theorem}

\begin{theorem}
Let $\psi\in C^{0,\lambda}(\Gamma)$ with $0<\lambda <1$ and let $W(x)$ be given as in \eqref{hw-eq14}. Then
\begin{equation*}
    |\nabla W(x)|\leq C|f(x_1)-x_2|^{\lambda-1}, \quad x\in U_{b_1}\backslash (\overline{U}_{b_2} \cup \Gamma),
\end{equation*}
where $C$ is a positive constant and $b_1=f_{-}-1$, $b_2=0$.
\label{thm11}
\end{theorem}

\begin{theorem}
Let $\psi\in BC(\Gamma)$ and let $V(x)$ be given as in \eqref{hw-eq15}. Then, for $0<\lambda <1$,
\begin{equation*}
|\nabla V(x)|\leq C|f(x_1)-x_2|^{\lambda-1},\quad x\in U_{b_1}\backslash (\overline{U}_{b_2} \cup \Gamma),
\end{equation*}
where $C$ is a positive constant and $b_1=f_{-}-1$, $b_2=0$.
\label{thm12}
\end{theorem}

\section{Integral Operators on the Real Line}\label{section9}

In this section, we introduce an integral equation theory on the real line, associated with the two-layered Green function.
We note that the results in this section are mainly based on the results in \cite[Appendix B]{ZC2003}.
Define the integral equation operator $\mathscr{K}_{l}$ with the kernel $l:\mathbb{R}^2\to \mathbb{C}$ given by
    \begin{equation}
        \mathscr{K}_{l}\psi(s):=\int_{\mathbb{R}}l(s,t)\psi(t)dt,\quad s \in \mathbb{R}.
        \label{eq65}
    \end{equation}
    It can be seen that the integral (\ref{eq65}) exists in a Lebesgue sense for every $\psi\in X:=L^{\infty}(\mathbb{R})$ and $s\in \mathbb{R}$ iff $l(s,\cdot)\in L^{1}(\mathbb{R}), s\in \mathbb{R}$, and that $\mathscr{K}_{l}:X\to Y:=BC(\mathbb{R})$ and is bounded iff $l(s,\cdot)\in L^{1}(\mathbb{R}), s\in \mathbb{R}$,
    \begin{equation}
        |\Vert l \Vert|:=\mathrm{ess} \sup_{s\in\mathbb{R}} \Vert l(s,\cdot) \Vert_{1} < \infty
        \label{eq66}
    \end{equation}
    and $\mathscr{K}_{l}\psi\in C(\mathbb{R})$ for every $\psi\in X$.
    Here, $\|\cdot\|_1$ denotes the $L^1$ norm.

    In the case that (\ref{eq66}) holds, it is convenient to identify $l:\mathbb{R}^2\to \mathbb{C}$ with the mapping $s\to l(s,\cdot)$ in $\mathbf{Z}:=L^{\infty}(\mathbb{R},L^1(\mathbb{R}))$, which mapping is essentially bounded with norm $|\Vert l \Vert|$.
    Let $\mathbf{K}$ denote the set of those functions $l\in \mathbf{Z}$ having the property that $\mathscr{K}_{l}\psi \in C(\mathbb{R})$ for every $\psi\in X$, where $\mathscr{K}_{l}$ is the integral operator (\ref{eq65}).
    Then, $\mathbf{Z}$ is a Banach space with the norm $|\Vert \cdot \Vert|$ and $\mathbf{K}$ is a closed subspace of $\mathbf{Z}$.
    Further, in terms of the above discussions, $\mathscr{K}_{l}:X\to Y$ and is bounded iff $l\in \mathbf{K}$.
    Let $BC(\mathbb{R},L^1(\mathbb{R}))$ denote the set of those functions $l\in \mathbf{Z}$ having the property that for all $s\in \mathbb{R}$,
    \begin{equation*}
        \Vert l(s,\cdot)-l(s',\cdot) \Vert_1 \to 0 \text{ as } s'\to s.
    \end{equation*}
    It is easy to see that $BC(\mathbb{R},L^1(\mathbb{R}))\subset \mathbf{K}$.

    For $(\phi_n)\subset Y$ and $\phi \in Y$,  we say that $(\phi_{n})$ converges strictly to $\phi$ and write $\phi_{n} \stackrel{s}{\rightarrow}\phi$  if $\sup _{n \in \mathbb{N}^{+}}$ $\left\|\phi_n\right\|_{\infty}<\infty$ and $\phi_n(t) \rightarrow \phi(t)$ uniformly on every compact subset of $\mathbb{R}$.
    For $(l_{n})\subset \mathbf{K}$ and $l\in \mathbf{K}$, we say that $(l_{n})$ is $\sigma$-convergence to $l$ and write $l_{n}\stackrel{\sigma}{\to}l$ if $\sup_{n\in \mathbb{N}^{+}} |\Vert l_{n} \Vert|<\infty$ and, for all $\psi\in X$,
    \begin{equation*}
        \int_{\mathbb{R}}l_{n}(s,t)\psi(t)dt\to \int_{\mathbb{R}}l(s,t)\psi(t)dt
    \end{equation*}
    as $n\to\infty$, uniformly on every compact subset of $\mathbb{R}$.

    For $a\in \mathbb{R}$, define the translation operator $T_a:\textbf{Z} \to \textbf{Z}$ by
    \begin{equation*}
        T_a l(s,t)=l(s-a,t-a),\quad s,t\in \mathbb{R}.
    \end{equation*}
    We say that a subset $W\subset \mathbf{K}$ is $\sigma$-sequentially compact in $\mathbf{K}$ if each sequence in $W$ has a $\sigma$-convergent subsequence with its limit in $W$.
    Let $B(Y)$ denote the Banach space of bounded linear operators on $Y$ and let $I$ denote the identity operator on $Y$.

    The following result on the invertibility of $I-\mathscr{K}_{l}$ has been proved in \cite{CZR2000}.

\begin{lemma}
Suppose that $W \subset \mathbf{K}$ is $\sigma$-sequentially compact and satisfies that, for all $s \in \mathbb{R}$,
\begin{equation}
\sup _{l \in W} \int_{\mathbb{R}}\left|l(s, t)-l\left(s', t\right)\right| \mathrm{d} t \rightarrow 0 \quad \text { as } s' \rightarrow s,
\label{eq29}
\end{equation}
that $T_{a}(W)=W$ for some $a \in \mathbb{R}$, and that $I-\mathscr{K}_{l}$ is injective for all $l \in W$. Then $\left(I-\mathscr{K}_{l}\right)^{-1}$ exists as an operator on the range space $\left(I-\mathscr{K}_{l}\right)(Y)$ for all $l \in W$ and
$$
\sup _{l \in W}\left\|\left(I-\mathscr{K}_{l}\right)^{-1}\right\|<\infty.
$$
If also, for every $l \in W$, there exists a sequence $\left(l_{n}\right) \subset W$ such that $l_{n} \stackrel{\sigma}{\rightarrow} l$ and, for each $n$, it holds that
\begin{equation}
I-\mathscr{K}_{l_{n}} \text { injective } \Rightarrow I-\mathscr{K}_{l_{n}} \text { surjective, }
\label{eq34}
\end{equation}
then also $I-\mathscr{K}_{l}$ is surjective for each $l \in W$ so that $\left(I-\mathscr{K}_{l}\right)^{-1} \in B(Y)$.
\label{thm15}
\end{lemma}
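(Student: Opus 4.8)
The plan is to run the classical collectively-compact operator argument (in the spirit of Anselone's theory), but carried out with respect to the weak notions of convergence $\stackrel{s}{\rightarrow}$ and $\stackrel{\sigma}{\rightarrow}$ that are designed exactly for this situation; this is essentially the content of \cite{CZR2000}. The one nontrivial ingredient I would isolate at the outset is a \emph{joint strict continuity} statement: if $(l_{n})\subset W$ with $l_{n}\stackrel{\sigma}{\rightarrow}l$ and $(\phi_{n})\subset Y$ with $\phi_{n}\stackrel{s}{\rightarrow}\phi$, then $\mathscr{K}_{l_{n}}\phi_{n}\stackrel{s}{\rightarrow}\mathscr{K}_{l}\phi$. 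To see this I would write $\mathscr{K}_{l_{n}}\phi_{n}-\mathscr{K}_{l}\phi=(\mathscr{K}_{l_{n}}-\mathscr{K}_{l})\phi+\mathscr{K}_{l_{n}}(\phi_{n}-\phi)$: the first term tends to $0$ uniformly on compact sets by the definition of $\sigma$-convergence, and for the second I would split the $t$-integral at $|t|\le R$ and $|t|>R$, bounding the near part by $\sup_{|t|\le R}|\phi_{n}(t)-\phi(t)|\cdot\sup_{l\in W}\|l(s,\cdot)\|_{1}$ and the far part uniformly in $n$. Alongside this I would record a \emph{compactness companion}: by (\ref{eq29}) any sequence $(\mathscr{K}_{l_{n}}\phi_{n})$ with $l_{n}\in W$ and $\sup_{n}\|\phi_{n}\|_{\infty}<\infty$ is pointwise bounded and uniformly equicontinuous on compact sets (note that $\sigma$-sequential compactness forces $\sup_{l\in W}|\Vert l\Vert|<\infty$), so by Arzel\`a--Ascoli and a diagonal argument it has a subsequence that converges strictly; this plays the role of collective compactness.

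Granting these, I would prove the uniform bound by contradiction. If no constant $c>0$ satisfies $\|(I-\mathscr{K}_{l})\phi\|_{\infty}\ge c\|\phi\|_{\infty}$ for all $l\in W$ and $\phi\in Y$, there are $l_{n}\in W$ and $\phi_{n}\in Y$ with $\|\phi_{n}\|_{\infty}=1$ and $\rho_{n}:=(I-\mathscr{K}_{l_{n}})\phi_{n}\to0$ in $\|\cdot\|_{\infty}$. Using $T_{a}(W)=W$ I would translate $\phi_{n},l_{n},\rho_{n}$ by a suitable integer multiple of $a$ so that, after translation, $\sup_{s\in[0,a]}|\phi_{n}(s)|>1/2$; this preserves $\|\phi_{n}\|_{\infty}=1$, the membership $l_{n}\in W$, and $\rho_{n}\to0$ (translation commutes with $I-\mathscr{K}$). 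By $\sigma$-sequential compactness pass to a subsequence with $l_{n}\stackrel{\sigma}{\rightarrow}l\in W$; by the compactness companion pass to a further subsequence with $\mathscr{K}_{l_{n}}\phi_{n}\stackrel{s}{\rightarrow}\chi$, hence $\phi_{n}=\mathscr{K}_{l_{n}}\phi_{n}+\rho_{n}\stackrel{s}{\rightarrow}\chi$ with $\|\chi\|_{\infty}\le1$ and $\sup_{[0,a]}|\chi|\ge1/2$ (from uniform convergence on $[0,a]$), in particular $\chi\neq0$. Joint strict continuity then gives $\mathscr{K}_{l_{n}}\phi_{n}\stackrel{s}{\rightarrow}\mathscr{K}_{l}\chi$, so $(I-\mathscr{K}_{l})\chi=0$, and since $l\in W$ the injectivity hypothesis forces $\chi=0$ --- a contradiction. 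Hence such a $c>0$ exists, and combined with injectivity this yields that $(I-\mathscr{K}_{l})^{-1}$ is well defined and bounded on $(I-\mathscr{K}_{l})(Y)$ with $\sup_{l\in W}\|(I-\mathscr{K}_{l})^{-1}\|\le c^{-1}<\infty$.

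For the surjectivity part I would fix $l\in W$ and a sequence $(l_{n})\subset W$ with $l_{n}\stackrel{\sigma}{\rightarrow}l$ for which (\ref{eq34}) holds. Since every $l_{n}\in W$, each $I-\mathscr{K}_{l_{n}}$ is injective, hence by (\ref{eq34}) surjective, hence boundedly invertible; and by the bound just proved $\|(I-\mathscr{K}_{l_{n}})^{-1}\|\le c^{-1}$ uniformly in $n$. Given $\psi\in Y$, put $\phi_{n}:=(I-\mathscr{K}_{l_{n}})^{-1}\psi$, so $\|\phi_{n}\|_{\infty}\le c^{-1}\|\psi\|_{\infty}$ and $\phi_{n}-\mathscr{K}_{l_{n}}\phi_{n}=\psi$. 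By the compactness companion pass to a subsequence with $\phi_{n}\stackrel{s}{\rightarrow}\phi$; by joint strict continuity $\mathscr{K}_{l_{n}}\phi_{n}\stackrel{s}{\rightarrow}\mathscr{K}_{l}\phi$, and letting $n\to\infty$ in $\phi_{n}-\mathscr{K}_{l_{n}}\phi_{n}=\psi$ (strict convergence implies pointwise convergence) gives $(I-\mathscr{K}_{l})\phi=\psi$. Thus $I-\mathscr{K}_{l}$ is onto; together with injectivity and the a priori bound, $(I-\mathscr{K}_{l})^{-1}\in B(Y)$, which completes the proof.

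The step I expect to be the genuine obstacle is the tail estimate inside the joint strict continuity statement, namely showing $\int_{|t|>R}|l_{n}(s,t)|\,|\phi_{n}(t)-\phi(t)|\,dt\to0$ as $R\to\infty$ uniformly in $n$ and in $s$ on compact sets. This is precisely the point at which one must exploit that $\sigma$-convergence tests $\mathscr{K}_{l_{n}}$ against \emph{every} bounded $\psi$, in tandem with the equicontinuity (\ref{eq29}) and the $\sigma$-sequential compactness of $W$, in order to preclude ``mass of $l_{n}(s,\cdot)$ escaping to infinity''. Everything else above is the standard collectively-compact bootstrap, and the full argument is carried out in \cite{CZR2000}.
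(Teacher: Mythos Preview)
The paper does not actually supply a proof of this lemma: it simply states the result and cites \cite{CZR2000}. Your proposal is a faithful sketch of precisely that argument --- the collectively-compact bootstrap adapted to the strict and $\sigma$ topologies --- and you correctly flag the tail estimate in the joint strict continuity as the one place where real work is hidden. So your approach and the paper's (by reference) coincide.
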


The following three lemmas give the properties of  $k_{f}$ and  $k_{\tilde{\beta},f}$, which are defined in \eqref{eq162} and \eqref{eq163}, respectively.
Due to the properties of the two-layered Green function given in Section \ref{section8},  we can deduce Lemmas \ref{thm16}, \ref{thm17} and \ref{thm18}
in a very similar manner as in  \cite[Appendix B]{ZC2003}. Thus we only present these lemmas without proofs.

\begin{lemma}
    Assume $c_1<0$, $c_2>0$, $d_1\geq 0$, $d_2>0$,
    and $\omega:[0,\infty)\to[0,\infty)$ is a function such that $\omega(s)\to 0$ as $s\to 0$.
    Let $\kappa\in L^1(\mathbb{R})$ be defined by
    \begin{equation*}
        \kappa(s):=
        \begin{cases}
            1-{\rm log}|s|,\quad 0<|s|\leq 1,\\
            |s|^{-3/2},\quad |s|>1.
        \end{cases}
    \end{equation*}

{\rm (\romannumeral1)} For all $f\in B(c_1,c_2)$,
    \begin{equation*}
        \left|\kappa_{f}(s,t)\right|\leq C\left|\kappa(s-t)\right|,\quad s,t \in \mathbb{R},\;s\neq t,
    \end{equation*}
    for some constant $C>0$ depending only on $c_1$, $c_2$, $\eta$, and $k_{\pm}$, and
    \begin{equation*}
        \sup_{|s_1-s_2|
        \leq h,f\in B(c_1,c_2)}\int_{\mathbb{R}}\left|\kappa_{f}(s_1,t)-\kappa_{f}(s_2,t)\right|dt\to 0
    \end{equation*}
    as $h\to0$.

{\rm (\romannumeral2)} For all $f \in B(c_1,c_2), \tilde{\beta} \in E(d_1,d_2,\omega)$,
\begin{equation*}
\left|\kappa_{\tilde{\beta}, f}(s, t)\right| \leq C\left|\kappa(s-t)\right|, \quad s, t \in \mathbb{R}, \quad s \neq t,
\end{equation*}
for some constant $C>0$ depending only on $c_{1}, c_{2}, d_{1}, d_{2}$, and $k_{\pm}$, and
\begin{equation*}
\sup\limits_{\substack{\left|s_{1}-s_{2}\right| \leq h, f \in B(c_1,c_2),\\ \tilde{\beta} \in E(d_1,d_2,\omega)}} \int_{\mathbb{R}}\left|\kappa_{\tilde{\beta}, f}\left(s_{1}, t\right)-\kappa_{\tilde{\beta}, f}\left(s_{2}, t\right)\right| \mathrm{d} t \rightarrow 0
\end{equation*}
as $h \rightarrow 0$.
\label{thm16}
\end{lemma}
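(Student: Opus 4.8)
The plan is to transcribe the argument of \cite[Lemma B.2]{ZC2003}, relying on the two facts stressed just before the statement. Since $f_+<0$, the whole of $\Gamma$ sits in the lower half space, so for $x=(s,f(s))$ and $y=(t,f(t))$ on $\Gamma$ one always has $x_2,y_2<0$ and may use the splitting (\ref{eq44}), $G(x,y)=\tfrac{\mathrm{i}}{4}H_0^{1}(k_{-}|x-y|)+R(x,y)$ with $R$ smooth; simultaneously the global estimate (\ref{eq4}) applies to $G$, and, by interior elliptic regularity on balls kept at fixed distance $\ge|f_+|/2$ from $\Gamma_0$, to all $x$- and $y$-derivatives of $G$ as well. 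Throughout I would keep track that every constant is uniform over $f\in B(c_1,c_2)$: indeed $-c_1\le f\le f_+<0$ bounds $|x_2|,|y_2|$, while $\|f\|_{C^{1,1}(\mathbb{R})}\le c_2$ (hence $L\le c_2$) controls the geometry; for part (ii) one additionally uses $\|\tilde\beta\|_\infty\le d_2$.

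For the pointwise domination $|\kappa_f(s,t)|\le C|\kappa(s-t)|$ I would split on $|s-t|\le1$ versus $|s-t|>1$. In the near regime $c^{-1}|s-t|\le|x-y|\le c|s-t|$ with $c=\sqrt{1+L^2}$, so the single-layer part is $O(1+|\log|s-t||)$ from the logarithmic behaviour of $H_0^{1}$ at the origin, the double-layer part $\propto H_1^{1}(k_{-}|x-y|)(x-y)\cdot\nu/|x-y|$ stays bounded because $f\in C^{1,1}$ yields $|(x-y)\cdot\nu|\le C|x-y|^2$ which cancels the $O(|x-y|^{-1})$ of $H_1^{1}$, and $R,\nabla R$ are bounded on $\{|s-t|\le1\}$ (a fixed compact set in the $(x_1-y_1,x_2+y_2)$-variables on which $R$ is smooth). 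In the far regime $|x-y|\ge|s-t|>1$, so (\ref{eq4}) directly gives $|G(x,y)|,|\nabla_yG(x,y)|\le C(1+|x_2|+|y_2|)|x-y|^{-3/2}\le C'|s-t|^{-3/2}$. Multiplying by the bounded factors $\sqrt{1+|f'(t)|^2}$, $\eta$, $k_{-}$, $\tilde\beta(s)$ gives the first displayed inequality in both (i) and (ii).

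For the equicontinuity I would use an $\varepsilon$-$\delta$ splitting of the $t$-integral. Given $\varepsilon>0$, choose $\delta\in(0,1)$ with $\int_{|u|\le2\delta}|\kappa(u)|\,du<\varepsilon$; then for $|s_1-s_2|\le\delta/2$ the pointwise bound makes the integral over $\{|t-s_1|\le\delta\}\cup\{|t-s_2|\le\delta\}$ at most $C\varepsilon$. On the complement we are at distance $\ge\delta/2$ from both diagonals, where $\kappa_f(\cdot,t)$ is differentiable with $|\partial_s\kappa_f(s,t)|\le C_\delta$ for $\delta/2<|s-t|\le2$ and $|\partial_s\kappa_f(s,t)|\le C|s-t|^{-3/2}$ for $|s-t|>2$ — the latter from (\ref{eq4}) for the $x$-derivatives of $G$, the former from the off-diagonal smoothness of $H_0^{1},H_1^{1}$ and $R$ — so the mean value theorem bounds that part by $(4C_\delta+C)|s_1-s_2|$. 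Since $\delta$ was chosen independently of $f$, choosing $h$ small completes (i) uniformly over $B(c_1,c_2)$. For (ii) the only new term stems from $\tilde\beta(s)$; writing $\tilde\beta(s_1)G(x_1,y)-\tilde\beta(s_2)G(x_2,y)=\tilde\beta(s_1)\big(G(x_1,y)-G(x_2,y)\big)+\big(\tilde\beta(s_1)-\tilde\beta(s_2)\big)G(x_2,y)$ reduces the first summand to the previous case and dominates the second by $\omega(|s_1-s_2|)\int_{\mathbb{R}}|G(x_2,y)|\sqrt{1+|f'(t)|^2}\,dt\le C\,\omega(|s_1-s_2|)\to0$, uniformly over $f\in B$ and $\tilde\beta\in E$.

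I expect the main obstacle to be organizational rather than analytic: one must verify carefully that the constant in (\ref{eq4}) depends only on $k_\pm$ and the depth bound $c_1$, that it survives differentiation of $G$ via interior regularity (using that $\Gamma$ stays at distance $\ge|f_+|$ from $\Gamma_0$), and that the $C^{1,1}$-hypothesis on $f$ supplies $|(x-y)\cdot\nu|\le C|x-y|^2$ with $C=C(c_2)$; granted these uniform ingredients, the remaining computations are a routine copy of \cite[Lemma B.2]{ZC2003}.
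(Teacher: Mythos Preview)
Your proposal is correct and is essentially the same approach as the paper's: the paper does not give a detailed proof but only remarks that Lemmas \ref{thm16}--\ref{thm18} follow \cite[Lemmas B.2--B.4]{ZC2003} using the decomposition of $G$ for $x_2,y_2<0$ together with the decay estimate (\ref{eq4}), which is exactly what you plan to do. Your sketch in fact supplies the details the paper omits --- the near-diagonal cancellation via $|(x-y)\cdot\nu|\le C(c_2)|x-y|^2$, the far-field bound from (\ref{eq4}) extended to $x$-derivatives by interior regularity (cf.\ the same device in the proof of Theorem \ref{thm19}), and the $\varepsilon$--$\delta$ splitting with the $\omega$-modulus handling the $\tilde\beta$-term in (ii) --- so nothing is missing.
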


\begin{lemma}Assume $c_1<0$, $c_2>0$. Then we have the following statements.

    {\rm (i)} Every sequence $\left(f_{n}\right) \subset B(c_1,c_2)$ has a subsequence $\left(f_{n_{m}}\right)$ such that $f_{n_{m}} \stackrel{s}{\rightarrow} f$, $f_{n_{m}}^{\prime} \stackrel{s}{\rightarrow} f^{\prime}$, with $f \in B(c_1,c_2)$.

    {\rm (ii)} Suppose that $\left(f_{n}\right) \subset B(c_1,c_2)$ and that $f_{n} \stackrel{s}{\rightarrow} f$, $f_{n}' \stackrel{s}{\rightarrow} f'$, with $f \in B(c_1,c_2)$. Then $\kappa_{f_{n}} \stackrel{\sigma}{\rightarrow} \kappa_{f}$.
    \label{thm17}
\end{lemma}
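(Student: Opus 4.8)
The plan is to treat the two parts separately: (i) is a compactness statement about $B$ proved by the Arzel\`a--Ascoli theorem together with a diagonal extraction, and (ii) follows by combining the uniform kernel bound of Lemma~\ref{thm16}(i) with a dominated-convergence argument that exploits the off-diagonal smoothness of $G$.

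For part (i): every $f\in B$ satisfies $\|f\|_{\infty,\mathbb{R}}\le c_2$, and since $\|f\|_{C^{1,1}(\mathbb{R})}\le c_2$ the function $f$ is Lipschitz with constant $\le c_2$ and $f'$ is Lipschitz with constant $\le c_2$. Hence the families $\{f:f\in B\}$ and $\{f':f\in B\}$ are uniformly bounded and equicontinuous. I would apply Arzel\`a--Ascoli on each interval $[-m,m]$, $m\in\mathbb{N}$, and pass to a diagonal subsequence $(f_{n_m})$ so that $f_{n_m}\to f$ and $f'_{n_m}\to g$ uniformly on every compact subset of $\mathbb{R}$; since $\|f_{n_m}\|_{\infty},\|f'_{n_m}\|_{\infty}\le c_2$ this gives $f_{n_m}\stackrel{s}{\rightarrow}f$ and $f'_{n_m}\stackrel{s}{\rightarrow}g$. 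The standard theorem on term-by-term differentiation of uniformly convergent sequences yields $f\in C^1(\mathbb{R})$ and $f'=g$, and passing to the limit preserves the lower bound on $f$, the bounds $\|f\|_{\infty},\|f'\|_{\infty}\le c_2$ and the Lipschitz constant $\le c_2$ of $f'$; therefore $f\in C^{1,1}(\mathbb{R})$ with $\|f\|_{C^{1,1}(\mathbb{R})}\le c_2$, i.e.\ $f\in B$.

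For part (ii) the bound $\sup_n|\Vert\kappa_{f_n}\Vert|<\infty$ is immediate: Lemma~\ref{thm16}(i) gives $|\kappa_{f_n}(s,t)|\le C|\kappa(s-t)|$ for every $n$ with $\kappa\in L^1(\mathbb{R})$, whence $|\Vert\kappa_{f_n}\Vert|\le C\Vert\kappa\Vert_1$. It remains to show, for each $\psi\in X=L^\infty(\mathbb{R})$, that $\int_{\mathbb{R}}\kappa_{f_n}(s,t)\psi(t)\,dt\to\int_{\mathbb{R}}\kappa_f(s,t)\psi(t)\,dt$ uniformly for $s$ in every compact set $S$. The key pointwise fact is that $\kappa_{f_n}(s,t)\to\kappa_f(s,t)$ whenever $s\ne t$: writing $x=(s,f_n(s))$, $y=(t,f_n(t))$, the kernel $\kappa_{f_n}$ is built from $G$, $\nabla_yG$, the unit normal at $y$ (a continuous function of $f'_n(t)$) and the Jacobian $\sqrt{1+|f'_n(t)|^2}$, so the convergences $f_n(s)\to f(s)$, $f_n(t)\to f(t)$, $f'_n(t)\to f'(t)$, combined with the fact that $G$ and $\nabla_yG$ are continuous (indeed $C^\infty$, by (\ref{eq63}) and the smoothness of the remainder $R$) away from the diagonal and away from $\Gamma_0$ --- and $\Gamma$ lies strictly below $\Gamma_0$ because $f_+<0$ --- give $\kappa_{f_n}(s,t)\to\kappa_f(s,t)$ for all $t\ne s$.

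To upgrade this to convergence of the integrals uniform in $s\in S$, I would split $\int_{\mathbb{R}}(\kappa_{f_n}(s,t)-\kappa_f(s,t))\psi(t)\,dt$ over the three regions $|s-t|\le\rho$, $\rho<|s-t|\le R$, and $|s-t|>R$. The outer region contributes at most $2C\|\psi\|_\infty\int_{|u|>R}|\kappa(u)|\,du$ and the inner region at most $2C\|\psi\|_\infty\int_{|u|\le\rho}|\kappa(u)|\,du$, both small uniformly in $n$ and $s$ by integrability of $\kappa$; on the middle region the kernels are continuous and $(s,t)$ ranges over a compact set bounded away from the diagonal, on which $f_n\to f$ and $f'_n\to f'$ uniformly, so $\kappa_{f_n}\to\kappa_f$ uniformly and the middle contribution tends to $0$ for fixed $\rho,R$. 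Choosing first $R$ large, then $\rho$ small, then $n$ large yields the claim. (Alternatively one can argue by contradiction, extracting $s_n\to s_*\in S$ and invoking the generalized dominated convergence theorem with the $L^1$-convergent majorants $|\kappa(s_n-\cdot)|$.) I expect the only real obstacle to be exactly this passage from pointwise to locally uniform convergence in $s$; away from that bookkeeping, everything reduces to the uniform kernel bound of Lemma~\ref{thm16}(i) and the elementary continuity of $G$ and $\nabla_yG$ off the diagonal and off $\Gamma_0$.
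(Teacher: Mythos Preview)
Your proposal is correct and follows essentially the approach the paper has in mind: the paper does not give an explicit proof of this lemma but states that it ``can be proved similarly as the reference \cite[Lemma B.2, B.3, B.4]{ZC2003}'', adapting the argument there via the decomposition $G=G_D+G_R$ for $x_2,y_2<0$ and the decay estimate (\ref{eq4}). Your Arzel\`a--Ascoli/diagonal extraction for (i) and the three-region splitting (near-diagonal, compact middle annulus, tail) controlled by the uniform majorant of Lemma~\ref{thm16}(i) for (ii) is exactly the standard mechanism underlying that reference, so there is nothing to add.
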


\begin{lemma} Assume $c_1<0$, $c_2>0$, $d_1\geq 0$, $d_2>0$, and $\omega:[0,\infty)\to[0,\infty)$ is a function such that $\omega(s)\to 0$ as $s\to 0$. Then we have the following statements.

{\rm (i)} Every sequence $\big(\tilde{\beta}_{n}\big) \subset E(d_1,d_2,\omega)$ has a subsequence $\big(\tilde{\beta}_{n_{m}}\big)$ such that $\tilde{\beta}_{n_{m}} \stackrel{s}{\rightarrow} \tilde{\beta}$ with $\tilde{\beta} \in E(d_1,d_2,\omega)$.

{\rm (ii)} If $\big(f_{n}\big) \subset B(c_1,c_2)$, $\big(\tilde{\beta}_{n}\big) \subset E(d_1,d_2,\omega)$ and $f_{n} \stackrel{s}{\rightarrow} f$, $f_{n}^{\prime} \stackrel{s}{\rightarrow} f^{\prime}$, $\tilde{\beta}_{n} \stackrel{s}{\rightarrow} \tilde{\beta}$, with $f \in B(c_1,c_2)$ and $\tilde{\beta} \in E(d_1,d_2,\omega)$, then $\kappa_{\tilde{\beta}_{n}, f_{n}} \stackrel{\sigma}{\rightarrow} \kappa_{\tilde{\beta}, f}$.
\label{thm18}
\end{lemma}

\end{appendices}



\end{document}